\newcommand{\re}{{\rm Re\,}}
\newcommand{\im}{{\rm Im\,}}
\newcommand{\eps}{\varepsilon}
\newcommand{\N}{{\mathbb N}}
\newcommand{\C}{{\mathbb C}}
\newcommand{\Z}{{\mathbb Z}}
\newcommand{\R}{{\mathbb R}}
\newcommand{\tef}{transcendental entire function}
\newcommand{\B}{\mathcal{B}}
\newcommand{\sing}{\operatorname{sing}}
\newcommand{\sw}{spider's web}
\theoremstyle{plain}
\newtheorem{theorem}{Theorem}[section]
\newtheorem{lemma}[theorem]{Lemma}     
\newtheorem{corollary}[theorem]{Corollary}
\theoremstyle{definition}
\newtheorem{definition}{Definition}[section]
\theoremstyle{remark}
\theoremstyle{problem}
\theoremstyle{example}
\newtheorem{example}{Example}
\begin{document}


\title[Fast escaping points]{Fast escaping points of entire functions}

\author{P.J. Rippon}
\address{Department of Mathematics and Statistics \\
The Open University \\
   Walton Hall\\
   Milton Keynes MK7 6AA\\
   UK}
\email{p.j.rippon@open.ac.uk}

\author{G.M. Stallard}
\address{Department of Mathematics and Statistics \\
The Open University \\
   Walton Hall\\
   Milton Keynes MK7 6AA\\
   UK}
\email{g.m.stallard@open.ac.uk}

\thanks{Both authors are supported by EPSRC grant EP/H006591/1}


\subjclass{30D05, 37F10}


\begin{abstract}
Let $f$ be a transcendental entire function and let $A(f)$ denote the set of points that escape to infinity `as fast as possible' under iteration. By writing $A(f)$ as a countable union of closed sets, called `levels' of $A(f)$, we obtain a new understanding of the structure of this set. For example, we show that if $U$ is a Fatou component in $A(f)$, then $\partial U\subset A(f)$ and this leads to significant new results and considerable improvements to existing results about $A(f)$. In particular, we study functions for which $A(f)$, and each of its levels, has the structure of an `infinite spider's web'. We show that there are many such functions and that they have a number of strong dynamical properties. This new structure provides an unexpected connection between a conjecture of Baker concerning the components of the Fatou set and a conjecture of Eremenko concerning the components of the escaping set.
\end{abstract}

\maketitle

\section{Introduction}
\setcounter{equation}{0} Let $f:\C\to\C$ be a transcendental entire function, and denote by
$f^{n},\,n=0,1,2,\ldots\,$, the $n$th iterate of~$f$. The {\it
Fatou set} $F(f)$ is defined to be the set of points $z \in \C$
such that $(f^{n})_{n \in \N}$ forms a normal
family in some neighborhood of $z$.  The complement of $F(f)$ is
called the {\it Julia set} $J(f)$ of $f$. An introduction to the
properties of these sets can be found in~\cite{wB93}.

This paper concerns the {\it escaping set} of $f$, defined as
follows:
\[
 I(f) = \{z: f^n(z) \to
 \infty \text{ as } n \to \infty \}.
\]
 For a
{\tef}~$f$, the escaping set was first studied  by
Eremenko~\cite{E} who proved that
\begin{equation}\label{E1}
I(f)\cap J(f)\ne \emptyset, \;
J(f)=\partial I(f),
\end{equation}
and that
\begin{equation}\label{E3}
\text{all the components of } \overline{I(f)} \text{ are unbounded}.
\end{equation}

Much of the work on $I(f)$ has been motivated by Eremenko's conjecture~\cite{E} that every component of $I(f)$ is unbounded. Significant progress was made on Eremenko's conjecture in~\cite{RS05} where it was shown that $I(f)$ always has at least one unbounded component. This result was proved by considering the {\it fast escaping set} $A(f)$, which was introduced by
Bergweiler and Hinkkanen in~\cite{BH99} and which now plays a key role in transcendental dynamics. In this paper we define
\[
A(f) = \{z: \mbox{there exists } L \in \N \text{ such that }
            |f^{n+L}(z)| \geq M^n(R,f), \text{ for } n \in \N\}.
\]
Here,
\[
M(r,f) = \max_{|z|=r} |f(z)|, \mbox{ for } r > 0,
\]
$M^n(r,f)$ denotes iteration of $M(r,f)$ with respect to the variable $r$, and $R>0$ can be taken to be any value such that $M(r,f) > r$ for $r\geq R$ or, equivalently, such that $M^n(R,f) \to \infty$ as $n \to \infty$. For simplicity, we will only write down this restriction on $R$ in formal statements of results -- elsewhere this should be assumed to be true. Note that $M(r,f) > r$ whenever $r > \min_{z \in
J(f)}|z|$;
otherwise, by Montel's thorem, we would have $\{z: |z| < r\} \subset F(f)$. We prove the basic properties of $A(f)$ in Section 2 -- these include the fact that the set $A(f)$ is independent of the choice of $R$.

Note that different definitions of $A(f)$ were used in~\cite{BH99}, \cite{RS05}, \cite{RS05a}, \cite{RS07}, \cite{RS08} and \cite{RS08c}. We show that all these definitions are equivalent in Section 2. The definition used here is of a similar form to the definition of the fast escaping set in a direct tract used in~\cite{BRS} and~\cite{RRS}.


We now introduce certain subsets of $A(f)$ based on
the above definition.

\begin{definition}
Let $f$ be a {\tef}, let $L \in \Z$ and let $R>0$ be such that $M(r,f) > r$ for $r\geq R$. The $L$th {\it level} of
$A(f)$ (with respect to $R$) is the set
\[
 A_R^L(f) = \{z:
            |f^n(z)| \geq M^{n+L}(R,f), \text{ for } n \in \N, n + L \geq 0 \}.
\]
We also put
\[
A_R(f) = A_R^0(f) = \{z:
            |f^n(z)| \geq M^n(R,f), \text{ for } n \in \N\}.
\]
\end{definition}

Note that, unlike the sets $I(f)$ and $A(f)$, each of the levels
of $A(f)$ is a {\it closed} set. Also, since $M^{n+1}(R,f) > M^n(R,f)$ for all
$n \geq 0$, we have $A_R^L(f) \subset A_R^{L-1}(f)$ for all $L \in
\Z$. This implies that $A(f)$ can be written as an expanding union
of closed sets:

\begin{equation}\label{A(f)def}
A(f) = \bigcup_{L \in \N} A_R^{-L}(f) \; \mbox{ and } \; A_R^{-L}(f)
\subset A_R^{-(L+1)}(f), \; L \in \N.
\end{equation}

In this paper we show that this concept of the levels of
$A(f)$ leads to significant new results concerning the
properties of $A(f)$. Further, it leads to considerable
improvements to many of the results in our
earlier papers~\cite{RS05}, \cite{RS05a} and \cite{RS07}. This paper is written in such a way that it provides a self-contained account of the main properties of $A(f)$, so some proofs of known results are included. In almost all cases, the proofs here are more straightforward than the original ones. 

In~\cite{RS05} we showed that all the components of $A(f)$ are unbounded. This is the strongest result for general entire functions that has been obtained towards Eremenko's
conjecture as, for any transcendental entire
function, $A(f) \neq \emptyset$ (see Theorem~\ref{E}) and so, as stated earlier, there is at least one unbounded component of $I(f)$.  Here we need the following more precise version of this result.

\begin{theorem}\label{main1}
Let $f$ be a {\tef} and let $R>0$ be such that $M(r,f) > r$ for $r\geq R$. Then, for each $L \in \Z$, each component of
$A_R^{L}(f)$ is closed and unbounded; in particular, each component of $A(f)$ is unbounded.
\end{theorem}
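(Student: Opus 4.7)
The plan is to prove closedness directly from the definition, and then to prove unboundedness by contradiction, using the maximum modulus principle together with a topological separation of a hypothetical bounded component from the rest of the level set.

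For closedness, observe that $A_R^L(f) = \bigcap_{n \geq \max(0,-L)} \{z : |f^n(z)| \geq M^{n+L}(R,f)\}$ is an intersection of closed sets (each $f^n$ is continuous and the inequality is non-strict), hence closed. A component of a closed set in $\C$ is closed, so the first assertion follows.

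For unboundedness, suppose for contradiction that $K$ is a bounded component of $A_R^L(f)$. First, I would produce a bounded open set $W \supset K$ with $\partial W \cap A_R^L(f) = \emptyset$: taking a large disk $D$ with $K \subset D$, the set $A_R^L(f) \cap \overline{D}$ is compact, and $K$ is a component of this compact set. By the standard result that in a compact Hausdorff space components coincide with quasi-components, $K$ can be separated from $(A_R^L(f)\cap\overline{D}) \setminus K$ and from $A_R^L(f)\cap \partial D$ by disjoint open sets in $\C$; one of these provides the required $W$.

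Next, for each $z \in \partial W$ there exists some $n \geq \max(0,-L)$ with $|f^n(z)| < M^{n+L}(R,f)$. By continuity of $f^n$ each set $U_n = \{z : |f^n(z)| < M^{n+L}(R,f)\}$ is open, and these cover $\partial W$; compactness of $\partial W$ produces a finite subcover by $U_{n_1},\dots,U_{n_k}$. Set $N = \max_i n_i$. For any $z \in \partial W$ pick $i$ with $z \in U_{n_i}$; then using the elementary estimate $|f^m(w)| \leq M^m(|w|,f)$ for entire $f$ (proved by induction from $|f(w)| \leq M(|w|,f)$ and monotonicity of $M(\cdot,f)$) together with strict monotonicity of $r \mapsto M(r,f)$ for $r > 0$, I get
\[
|f^N(z)| \leq M^{N-n_i}(|f^{n_i}(z)|,f) < M^{N-n_i}\bigl(M^{n_i+L}(R,f),f\bigr) = M^{N+L}(R,f).
\]
Thus $\max_{\partial W}|f^N| < M^{N+L}(R,f)$. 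Letting $V$ be the component of $W$ that contains $K$, $V$ is a bounded domain and $\partial V \subset \partial W$, so the maximum principle applied to $f^N$ on $V$ gives $|f^N(z_0)| < M^{N+L}(R,f)$ for every $z_0 \in K$. Since $N + L \geq 0$ and $K \subset A_R^L(f)$, this contradicts the defining inequality $|f^N(z_0)| \geq M^{N+L}(R,f)$, completing the proof. The ``in particular'' statement then follows from \eqref{A(f)def}, since any component of $A(f)$ contains a component of some $A_R^{-L}(f)$, which is already unbounded.

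I expect the main obstacle to be the topological separation step producing $W$: the definition of a component is only about connectedness, so turning a bounded component of a closed set into an open neighbourhood with boundary missing the whole closed set requires the component/quasi-component argument above. The rest of the proof is driven cleanly by the submultiplicative inequality $|f^m(w)| \leq M^m(|w|,f)$ and the maximum principle.
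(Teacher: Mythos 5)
Your proposal is correct and follows essentially the same route as the paper's second proof of Theorem~\ref{main1}: closedness is immediate from the definition, and unboundedness is obtained by contradiction, separating a hypothetical bounded component by a curve (or boundary) lying in $A_R^L(f)^c$ and then invoking the maximum principle. The only cosmetic difference is that the paper applies the maximum principle at every time $n$ and uses the finite intersection property of the nested closed sets $\gamma_n$ on the separating curve, whereas you use a finite subcover to obtain a single uniform time $N$ and apply the maximum principle once; these are dual uses of the same compactness.
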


In Section 3 of this paper we give two proofs of Theorem~\ref{main1}. First we outline a constructive proof similar to the proof of~\cite[Theorem 1]{RS05}, the ideas of which have had significant applications (see~\cite{BFLM}, \cite{lR07} and~\cite{RRRS}). Then we give a new shorter proof by contradiction.

In Section 4, we consider Fatou components that meet $A(f)$. We prove the following, which is a key result of the paper and is used to deduce
many of the other results. Given its many applications, it has a
surprisingly simple proof which consists of combining the
new concept of the levels of $A(f)$ with known
distortion properties of the iterates of a function in a Fatou
component.

\begin{theorem}\label{main2}
Let $f$ be a {\tef}, let $R>0$ be such that $M(r,f) > r$ for $r\geq R$, and let $L \in \Z$. If $U$ is a Fatou component that
meets $A_R^{L}(f)$, then
\begin{itemize}
\item[(a)] $\overline{U} \subset A_R^{L-1}(f)$;
\item[(b)] if, in addition, $U$ is simply connected, then $\overline{U}
\subset A_R^{L}(f)$.
\end{itemize}
\end{theorem}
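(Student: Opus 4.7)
My plan is to control $|f^n(z)|$ for $z \in \overline{U}$ via a distortion estimate comparing it to $|f^n(z_0)|$, where $z_0 \in U \cap A_R^L(f)$ is a fixed reference point, and then to use the strong growth $M(r,f)/r \to \infty$ to absorb the distortion constant into at most one level of loss.

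Writing $U_n$ for the Fatou component containing $f^n(U)$, both $f^n(z_0)$ and $f^n(z)$ lie in $U_n$, and $f \colon U_k \to U_{k+1}$ is a holomorphic map between hyperbolic plane domains. Iterating the Schwarz--Pick inequality gives
$$\rho_{U_n}(f^n(z), f^n(z_0)) \leq \rho_U(z, z_0),$$
a constant depending only on $z$ and $z_0$. A standard comparison between the hyperbolic metric and Euclidean moduli in hyperbolic plane domains (exploiting that $|f^n(z_0)| \to \infty$, so $U_n$ stays far from any fixed omitted value for large $n$) then converts this into a Euclidean ratio bound of the form
$$|f^n(z)| \geq c(z,z_0)\,|f^n(z_0)| \geq c(z,z_0)\,M^{n+L}(R,f),$$
with $c(z,z_0) > 0$ independent of $n$.

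For part (a), since $M(r,f)/r \to \infty$, for every fixed $c > 0$ one has $M^{n+L}(R,f) \geq (1/c)\,M^{n+L-1}(R,f)$ once $n$ is sufficiently large, yielding $|f^n(z)| \geq M^{n+L-1}(R,f)$ in that range. I would handle the finitely many smaller admissible values of $n$ by either refining the distortion constant along a carefully chosen path from $z_0$ to $z$ in $U$, or by invoking the forward-invariance observation that $w \in A_R^{L'}(f)$ implies $f(w) \in A_R^{L'+1}(f)$, which allows shifting the small-$n$ verification to a large-$n$ one on the image Fatou component. Extension from $U$ to $\overline{U}$ then follows by continuity of $f^n$ together with the closedness of $A_R^{L-1}(f)$.

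For part (b), when $U$ is simply connected, I would apply Koebe's $1/4$ and distortion theorems through the Riemann map of $U$ to sharpen the distortion inequality and preserve the level, obtaining the improved bound $|f^n(z)| \geq M^{n+L}(R,f)$ directly, with the closure statement again by continuity. The main obstacle is precisely this level-preserving refinement: since $f^n$ is not univalent on $U$, Koebe cannot be applied to $f^n$ itself, and the sharpening must instead be extracted from the conformal geometry of $U$ via its Riemann map, which is what distinguishes (b) from the weaker, distortion-constant-absorbing argument of (a).
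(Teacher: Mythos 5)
Your overall architecture (fix $z_0\in U\cap A_R^L(f)$, compare $|f^n(z)|$ with $|f^n(z_0)|$ via a distortion estimate on compact subsets of $U$, then pass to $\overline U$ using closedness of the levels) is the paper's, but the distortion estimate you claim in the general case is false, and this is exactly where the content of the theorem lies. For a multiply connected hyperbolic domain the density of the hyperbolic metric near $\infty$ can be as small as that of $\C\setminus\{0,1\}$, i.e.\ comparable to $1/(|w|\log|w|)$ (think of a very fat round annulus), so a bounded hyperbolic distance between $f^n(z)$ and $f^n(z_0)$ controls only $\log\log|f^n(z)|-\log\log|f^n(z_0)|$. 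The correct general conclusion is Baker's power estimate $|f^n(z)|\geq|f^n(z_0)|^{1/C}$ (Lemma~\ref{dist} part (a)), not the Euclidean ratio bound $|f^n(z)|\geq c\,|f^n(z_0)|$; the ratio bound is precisely what simple connectivity buys (Lemma~\ref{dist} part (b), via Koebe applied to the Riemann map). Part (a) of your argument is repairable, since $(M^{n+L}(R))^{1/C}\geq M^{n+L-1}(R)$ for large $n$ because $M(r)\geq r^C$ eventually, and your device for the finitely many small $n$ (push forward and use that $|z|<M^m(r)$ forces $|f^k(z)|<M^{k+m}(r)$) is sound. But had the ratio bound been available for multiply connected $U$, the level-preserving argument needed for (b) would apply verbatim and give $\overline U\subset A_R^L(f)$, contradicting the example at the end of Section~4 showing that $L-1$ cannot be replaced by $L$ in (a).

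The more serious gap is in part (b). Even granting the simply connected ratio bound $|f^n(z)|\geq|f^n(z_0)|/C$, Koebe cannot make the constant equal to $1$, and $|f^n(z)|\geq M^{n+L}(R)/C$ does not place $z$ in $A_R^L(f)$: the level is defined by the constant-free inequality. The missing idea is a contradiction argument that converts a one-time deficit into an unbounded one. If some $z_1\in U$ fails to lie in $A_R^L(f)$, then $|f^N(z_1)|=(M^{N+L}(R))^{1/c}=R_0$ for some $N$ and some $c>1$, whence for all $n$
\[
\frac{|f^{n+N}(z_0)|}{|f^{n+N}(z_1)|}\;\geq\;\frac{M^n(R_0^c)}{M^n(R_0)}\;\geq\;\bigl(M^n(R_0)\bigr)^{c-1}\;\to\;\infty,
\]
using the convexity estimate $\log M(r^c)\geq c\log M(r)$ of Lemma~\ref{convex}. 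This contradicts the bounded ratio, and it is this blow-up — not a sharpened form of Koebe — that preserves the level. You correctly identified the level-preserving refinement as the main obstacle, but the mechanism you propose does not overcome it.
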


{\it Remark.} Note that Theorem~\ref{main2} implies that, if $U$ is a Fatou component
in $A(f)$, then the boundary of $U$ is also in $A(f)$. This
contrasts with the situation of a Fatou component in $I(f)$ which
may have boundary points that are not in $I(f)$. For
example, if $f(z) = z + 1 + e^{-z}$, then $F(f)$ is connected and
is in $I(f)$~\cite{pF26}. In fact, for this function, $F(f)$ is a Baker domain and so
is contained in the slow escaping set~\cite{RS08c}. Since the boundary of $F(f)$
is always equal to $J(f)$, the boundary of this Baker domain contains many points whose behaviour is
very different under iteration, such as fast escaping points and
periodic points.

We conclude Section 4 with an example which shows that it is not possible to replace $L-1$ by $L$ in Theorem~\ref{main2} part (a).

In Section 5, we consider the relationship between $A(f)$ and $J(f)$. The main result in this section is the following.

\begin{theorem}\label{main3}
Let $f$ be a {\tef}, let $R>0$ be such that $M(r,f) > r$ for $r\geq R$, and let $L \in \Z$. All the components of $A_R^{L}(f)
\cap J(f)$ are unbounded if and only if $f$ has no multiply connected Fatou components.
\end{theorem}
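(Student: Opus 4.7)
The plan is to prove both implications; the more subtle direction is ``no multiply connected Fatou components $\Rightarrow$ every component of $X := A_R^{L}(f) \cap J(f)$ is unbounded''. For this direction, let $E$ be a component of $X$ and let $G$ be the component of $A_R^{L}(f)$ containing $E$. By Theorem~\ref{main1}, $G$ is closed and unbounded. If $G \subset J(f)$ then $G = E$ is unbounded. Otherwise every Fatou component $U$ meeting $G$ is simply connected by hypothesis, so Theorem~\ref{main2}(b) gives $\overline U \subset A_R^{L}(f)$, whence $\overline U \subset G$ by connectedness and $\partial U \subset G \cap J(f) \subset X$. The central tool is a topological dichotomy for any simply connected Fatou component $U \subset G$ with $\partial U \cap E \neq \emptyset$: either (a) $U$ is bounded, and then $\partial U$ is a continuum lying in the single $X$-component $E$, forcing $U \subset \hat E$ where $\hat E := E \cup \bigcup \{W : W \text{ bounded component of } \C \setminus E\}$; or (b) $U$ is unbounded, and then every component of $\partial U$ is unbounded (since $U \subsetneq \C$ simply connected forces $\C \setminus U$ to have only unbounded components, and this transfers to components of $\partial U$), so a component of $\partial U$ meeting $E$ would be an unbounded subset of the bounded set $E$, a contradiction.

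Now suppose for contradiction that $E$ is bounded, and let $V_\infty := \C \setminus \hat E$ be the unbounded component of $\C \setminus E$. Since $G$ is unbounded while $\hat E$ is bounded, $G \cap V_\infty \neq \emptyset$, and by connectedness of $G$ some point $z \in E$ is a limit of points of $G \cap V_\infty$. If this is witnessed by a single Fatou component $U \subset G \cap V_\infty$ with $z \in \partial U$, the dichotomy gives $U \subset \hat E$, contradicting $U \subset V_\infty$; if witnessed by a single $X$-component $E' \subset G \cap V_\infty$, then $z \in \overline{E'} = E'$ meets $E$, contradicting disjointness of distinct $X$-components. To handle the remaining case of accumulation by many distinct components of $G \cap V_\infty$, apply the Sura-Bura theorem to the compact component $E$ of the closed (hence locally compact) set $X$: this yields an open bounded $V$ with $E \subset V$ and $\partial V \cap X = \emptyset$. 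Then $G$, being unbounded with $E \subset V$, must cross $\partial V$, and does so through a single Fatou component $U_0 \subset G$ (since $\partial V \cap X = \emptyset$ forces $G \cap \partial V \subset F(f)$); an analog of the dichotomy applied to $U_0$, now using that $\partial U_0 \subset V \cap X$ is a bounded subset of $X$, yields a contradiction.

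For the other direction, assume $f$ has a multiply connected Fatou component $U$. By Baker's theorem, $U$ is bounded and its forward iterates $U_n := f^n(U)$ are also multiply connected Fatou components in $A(f)$; choose $n$ large enough that $U_n \subset A_R^{L+1}(f)$, and fix a bounded complementary component $K$ of $U_n$ (which exists since $U_n$ is multiply connected). Theorem~\ref{main2}(a) then yields $\overline{U_n} \subset A_R^{L}(f)$, so $\partial U_n \subset X$ and in particular $\partial K \subset \partial U_n \subset X$ with $K$ bounded. The component of $X$ containing any point of $\partial K$ is a connected subset of $J(f)$ which cannot cross $U_n \subset F(f)$ to escape $K$, hence is trapped inside $K$ and bounded, giving the desired bounded component of $X$. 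The main obstacle throughout is the topological claim (b) above and its careful application via Sura-Bura to convert accumulation near $E$ into a single-component crossing of a separating curve.
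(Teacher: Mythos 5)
Your converse direction (a multiply connected Fatou component forces a bounded component of $A_R^{L}(f)\cap J(f)$) is correct and complete; it is more explicit than the paper's one--line appeal to Lemma~\ref{Baker} (which shows every component of $J(f)$ is then bounded), and has the merit of actually exhibiting a non-empty bounded component. The forward direction, however, has a genuine gap exactly at its crux. After applying Sura--Bura to get a bounded open $V\supset E$ with $\partial V\cap X=\emptyset$ and locating a Fatou component $U_0\subset G$ meeting $\partial V$, you assert that $\partial U_0\subset V\cap X$. There is no justification for $\partial U_0\subset V$, and it is false in general: $U_0$ contains a point of $\partial V$, so $U_0\not\subset V$, and nothing prevents $\partial U_0$ from lying entirely outside $\overline{V}$; moreover $V\cap X$ may contain components of $X$ other than $E$, so even $\partial U_0\subset V$ would not bring you back to $E$. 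Since the ``analog of the dichotomy'' that is supposed to produce the final contradiction is never stated, the main implication is not proved. (A secondary imprecision: in case (b) of your dichotomy, unboundedness of the components of $\C\setminus U$ does not formally ``transfer'' to components of $\partial U$; the correct route is that $\partial_{\hat{\C}}U$ is a continuum containing $\infty$, by unicoherence of the sphere, so every component of $\partial U=\partial_{\hat{\C}}U\setminus\{\infty\}$ accumulates at $\infty$.)

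The missing idea is that the separating set must be \emph{connected}. Replace $V$ by a single Jordan curve $\gamma$ with $E\subset\mathrm{int}\,\gamma$ and $\gamma\cap X=\emptyset$ (the plane separation theorem from Newman that the paper cites, applied to the closed set $X$, or better to $\partial G$). Since $\partial G=G\cap J(f)\subset X$ (Corollary~\ref{AJF} together with the remark that interior points of $A_R^{L}(f)$ lie in $F(f)$), the curve $\gamma$ avoids $\partial G$; being connected and meeting the unbounded connected set $G$, it therefore lies entirely in $\mathrm{int}\,G\subset F(f)$, hence in a single Fatou component, which surrounds the points of $E\subset J(f)$ and so is multiply connected --- the contradiction the hypothesis is designed to produce. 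It is telling that your argument for this direction never invokes multiple connectivity at all. The paper's proof is this same idea applied to the component $C$ of $\partial\Gamma=\Gamma\cap J(f)$ containing $z_0$ rather than to the component $E$ of $X$, which makes the separation a direct application of Newman's theorem and dispenses with Sura--Bura and with your case analysis entirely.
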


It follows immediately from Theorem~\ref{main3} that if $f$ has no multiply connected Fatou components, then {\it all} the components of $A(f) \cap J(f)$ are unbounded. In~\cite{RS05a} we used a relatively complicated
argument to prove the weaker result that if $f$ has  no multiply connected Fatou components,
then there is at least one unbounded component of $A(f)
\cap J(f)$. Here, Theorem~\ref{main3} follows from Theorem~\ref{main2} and Theorem~\ref{main1} in a straightforward manner, showing the value of introducing the concept
of the levels of $A(f)$.

In Sections 6--8, we consider the case when $A(f)$ has a structure which we now describe as an `infinite spider's web'. The first examples of functions for which $A(f)$ has this structure were given in~\cite[Theorem 2]{RS05}. It has become apparent that there are many transcendental entire functions for which $A(f)$ has this structure and that such functions have a number of strong dynamical properties. Because of the increasing significance of this structure, we introduce the following terminology.

\begin{definition}
A set $E$ is an {\it (infinite) spider's web} if $E$ is connected and there
exists a sequence of bounded simply connected domains $G_n$ with $G_n \subset G_{n+1}$, for $n \in \N$,
$\partial G_n \subset E$, for $n \in \N$ and $\bigcup_{n \in \N}G_n = \C$.
\end{definition}

{\it Remark.} If $I(f)$ is a spider's web, then $I(f)$ is connected (and unbounded) and so Eremenko's conjecture holds.\\

In~\cite[Theorem 2]{RS05} we showed that, if $f$ is a {\tef}, then
$A(f)$ and $I(f)$ are spiders' webs whenever $f$ has a multiply connected Fatou component. In fact $A_R(f)$ is also a spider's web for such a function. This is a corollary of the following more general result, proved in Section~6. A version of Theorem~\ref{main4} was proved in~\cite{RS07}.

\begin{theorem}\label{main4}
Let $f$ be a {\tef} and let $R>0$ be such that $M(r,f) > r$ for $r\geq R$. If $A_R(f)^c$ has a bounded component, then each of $A_R(f)$, $A(f)$ and $I(f)$ is a {\sw}.
\end{theorem}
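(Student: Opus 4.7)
Let $W$ be the given bounded component of $A_R(f)^c$, and let $G_0$ be its ``fill'': the interior of the complement in $\C$ of the unbounded component of $\C\setminus W$. Then $G_0$ is a bounded, simply connected domain with $\partial G_0\subset\partial W\subset A_R(f)$, since $A_R(f)$ is closed. The plan is to construct a nested exhausting family $G_0\subset G_1\subset\cdots$ of bounded, simply connected domains with $\partial G_n\subset A_R(f)$ and $\bigcup_n G_n=\C$, and then deduce connectedness of each of $A_R(f)$, $A(f)$, $I(f)$ from Theorem~\ref{main1}. A key preliminary observation is the \emph{forward invariance} $f(A_R(f))\subset A_R(f)$: if $z\in A_R(f)$ then $|f^n(f(z))|=|f^{n+1}(z)|\ge M^{n+1}(R,f)\ge M^n(R,f)$, so $f(z)\in A_R(f)$. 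In particular, $f^n(\partial G_0)$ is a continuum contained in $A_R(f)\cap\{|w|\ge M^n(R,f)\}$ for every $n\ge 0$.

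Inductively I define $G_{n+1}$ as the fill of $\overline{G_n}\cup f(\overline{G_n})$: this is a bounded, simply connected domain, contains $G_n$, and satisfies $\partial G_{n+1}\subset\partial G_n\cup f(\partial G_n)\subset A_R(f)$ by forward invariance. For exhaustion, I argue that $G_n$ contains the disk $\{|z|<M^n(R,f)\}$, whose radius tends to infinity. Since $f^n(\partial G_0)$ lies in $\{|w|\ge M^n(R,f)\}$, a winding-number / argument-principle argument shows that whenever $f^n(\partial G_0)$ surrounds $0$, its fill contains $\{|w|<M^n(R,f)\}$; combined with the inductive construction this forces $\{|w|<M^n(R,f)\}\subset G_n$. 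The winding of $f^n(\partial G_0)$ around $0$ equals the number of preimages of $0$ under $f^n$ in $G_0$, which is positive after a preliminary reduction ensuring $0\in G_0$ --- arranged either by replacing $W$ with the bounded component of $A_R(f)^c$ containing $\{|z|<R\}$, or by a separate modification of the initial step.

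Finally, for the spider's web property: by Theorem~\ref{main1}, every component of $A_R(f)$ is unbounded, so any component disjoint from every $\partial G_m$ would, by the exhaustion, lie in some $G_n$ and hence be bounded --- a contradiction. Thus every component meets (and therefore contains) some $\partial G_m$, and using the relation $\partial G_{m+1}\subset\partial G_m\cup f(\partial G_m)$ together with the connectedness of each $\partial G_m$ one sees that all of the $\partial G_m$ lie in a single component, so $A_R(f)$ is connected. Since $\partial G_n\subset A_R(f)\subset A(f)\subset I(f)$, the same family $(G_n)$ witnesses the spider's web structure for $A(f)$ and $I(f)$; connectedness for these sets follows analogously, using Theorem~\ref{main1} for $A(f)$ and the dense skeleton $A_R(f)\subset I(f)$ for $I(f)$. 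The main obstacle is the exhaustion step, in particular the preliminary reduction ensuring $0\in G_0$ and the winding-number argument guaranteeing that the continua $f^n(\partial G_0)$ surround disks of radius $M^n(R,f)\to\infty$.
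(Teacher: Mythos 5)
Your overall architecture for $A_R(f)$ and $A(f)$ --- fill a bounded complementary component, push it forward, show the images exhaust the plane, then invoke Theorem~\ref{main1} --- is essentially the paper's, but two steps are genuinely incomplete. First, the exhaustion step, which you yourself flag as the main obstacle. Your reduction to ``$0\in G_0$'' by passing to ``the bounded component of $A_R(f)^c$ containing $\{z:|z|<R\}$'' is circular: nothing yet guarantees that this particular component is bounded (only that \emph{some} component is). Moreover, the assertion that $0\in G_0$ makes ``the number of preimages of $0$ under $f^n$ in $G_0$'' positive is false --- $0\in G_0$ does not produce a zero of $f^n$ in $G_0$. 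No winding numbers are needed. Since $G_n$ is a bounded simply connected domain with $\partial G_n\subset A_R^n(f)\subset\{z:|z|\ge M^n(R)\}$, connectedness of the disc $\{z:|z|<M^n(R)\}$ forces a dichotomy: either $\{z:|z|<M^n(R)\}\subset G_n$ or $G_n\cap\{z:|z|<M^n(R)\}=\emptyset$. If the second alternative held for every $n$, then every $z\in W$ would satisfy $|f^n(z)|\ge M^n(R)$ for all $n$, i.e.\ $W\subset A_R(f)$, contradicting $W\subset A_R(f)^c$; and once the first alternative holds for one $n$ it propagates to all larger $n$, since $f(G_n)$ then contains the point $f(0)$, of modulus less than $M^{n+1}(R)$. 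This is how the paper argues, with the simpler choice $G_n=\widetilde{f^n(G)}$ and a subsequence to obtain nesting; note also that your set $\overline{G_n}\cup f(\overline{G_n})$ may be disconnected, so its fill need not be a domain.

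Second, and more seriously, the claim that connectedness of $I(f)$ ``follows analogously'' is a real gap: Theorem~\ref{main1} says nothing about components of $I(f)$ being unbounded --- that is precisely Eremenko's conjecture --- so the argument that every component must meet some $\partial G_m$ breaks down for $I(f)$. Concretely, a Fatou component $V\subset I(f)$ none of whose boundary points escapes would be a component of $I(f)$ disjoint from the component $I_0$ containing $A(f)$, and nothing in your sketch excludes this. The paper spends over half of its proof here: escaping points of $J(f)$ are absorbed into $I_0$ via $J(f)=\partial A(f)$ (Theorem~\ref{EJ}); and for a Fatou component $V\subset I(f)$ one must prove $\partial V\cap I(f)\ne\emptyset$, which is done by showing that some iterate $f^{n_k}(V)$ meets a fundamental loop $\partial G_m$ with $G_m\cap J(f)\ne\emptyset$, and that a simply connected Fatou component contains no curve surrounding points of $J(f)$, so its \emph{boundary} must meet $\partial G_m\subset I(f)$. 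You need to supply this argument (or an equivalent one) before the assertion about $I(f)$ is proved.
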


It follows from Theorem~\ref{main4} that if $A_R(f)$ is a {\sw}, then so are $A(f)$ and $I(f)$. We now know that there are many functions for which
$A_R(f)$ is a spider's web; see Theorem~\ref{main8}. In fact all of the functions for which we know that $I(f)$ is a spider's web have the stronger property that $A_R(f)$ is a spider's web.

An approximation to $A_R(f)$ for the function
\[
f(z) = \frac{1}{2} \left( \cos z^{1/4} + \cosh z^{1/4}\right) = 1 + \frac{z}{4!} + \frac{z^2}{8!} + \cdots
\]
is shown in Figure 1. (This function was mentioned in \cite[Section 6]{RS07}.) This structure of $A_R(f)$ is novel, and clearly very
different to the familiar structure of a Cantor bouquet, which arises when describing the escaping sets of many entire functions.

In this diagram, the set of black points is an approximation to $A_R(f)$, for some $R>0$. The large grey region on the right is the immediate basin of an attracting fixed point, and the fine structure of $A_R(f)$ around this basin extends to the rest of the plane, though the computer cannot handle the arithmetic at most of these points. The range shown is given by $|\re z|, |\im z| \leq 2 \times 10^4$.

\begin{figure}[htb]
\begin{center}
\includegraphics[width=8cm]{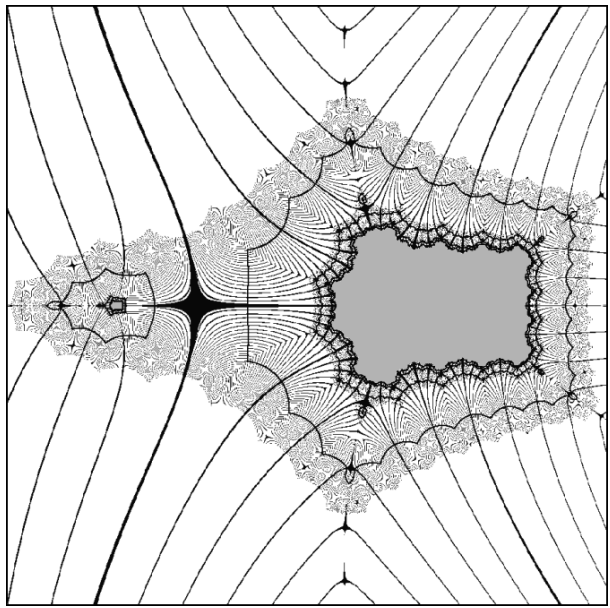}
\caption{An $A_R(f)$ spider's web}
\end{center}
\end{figure}

In Section 7, we show that functions for which $A_R(f)$ is a
spider's web have many strong dynamical properties. We begin by proving the
following result which follows easily from Theorem~\ref{main2}.
Part (a) is new and a version of part (b) was proved in~\cite{RS07}, though the
proof given here is more straightforward.

\begin{theorem}\label{main6}
Let $f$ be a {\tef}, let $R>0$ be such that $M(r,f) > r$ for $r\geq R$, and let $A_R(f)$ be a spider's web.
\begin{itemize}
\item[(a)] If $f$ has no multiply connected Fatou components, then each of
\[
A_R(f) \cap J(f), \; A(f) \cap J(f), \; I(f) \cap J(f) \mbox{ and } J(f)
\]
is a
spider's web.
\item[(b)] The function $f$ has no unbounded Fatou components.
\end{itemize}
\end{theorem}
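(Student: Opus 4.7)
The strategy is to establish part~(b) first and then deduce part~(a) from it.

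\emph{Part~(b).} Suppose, for contradiction, that $U$ is an unbounded Fatou component. Since multiply connected Fatou components of a transcendental entire function are bounded (a classical result of Baker), $U$ must be simply connected. Fix any $z_0 \in U$; as $\bigcup_n G_n = \C$, there is some $n$ with $z_0 \in G_n$, and since $U$ is unbounded while $G_n$ is bounded, $U$ also contains points outside $\overline{G_n}$. Connectedness of $U$ then forces $U \cap \partial G_n \ne \emptyset$, so $U$ meets $A_R(f)$, and Theorem~\ref{main2}(b) yields $\overline{U} \subset A_R(f) \subset A(f)$. However, any Fatou component meeting $A(f)$ is a Baker wandering domain, which is bounded and multiply connected, contradicting $U$ being unbounded and simply connected.

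\emph{Part~(a).} By part~(b) every Fatou component of $f$ is bounded, and under the hypothesis of no multiply connected Fatou components, every Fatou component is bounded and simply connected. The plan is to modify the $G_n$ to yield bounded simply connected $H_n$ with $\partial H_n \subset A_R(f) \cap J(f)$. Let $\mathcal{V}_n$ be the collection of Fatou components meeting $\partial G_n$; by Theorem~\ref{main2}(b), each $V \in \mathcal{V}_n$ is simply connected with $\overline{V} \subset A_R(f)$, and $V$ is bounded by part~(b). Define $H_n$ as the union of $G_n$, the members of $\mathcal{V}_n$, and all bounded complementary components of this union; after passing to a subsequence we may assume $H_n \subset H_{n+1}$ and $\bigcup_n H_n = \C$. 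By construction no boundary point of $H_n$ lies in any Fatou component (such a component would be absorbed into $H_n$), so $\partial H_n \subset J(f)$; simultaneously $\partial H_n \subset \partial G_n \cup \bigcup_{V \in \mathcal{V}_n} \partial V \subset A_R(f)$, giving $\partial H_n \subset A_R(f) \cap J(f)$.

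To complete (a) it remains to verify connectedness of each of the four sets $E$. Given $z_1, z_2 \in E$, take $n$ large enough that $z_1, z_2 \in H_n$. For $E = A_R(f) \cap J(f)$ and $E = A(f) \cap J(f) = \bigcup_L A_R^{-L}(f) \cap J(f)$, Theorem~\ref{main3} shows the components of $E$ through $z_1$ and $z_2$ are unbounded; each must therefore exit $H_n$ and meet the connected set $\partial H_n \subset E$, linking the two points. For $E = J(f)$, the absence of multiply connected Fatou components implies $J(f)$ is connected (a standard fact in entire function dynamics), and the $H_n$ witness the spider's web structure directly. For $E = I(f) \cap J(f)$, connectedness is squeezed from the chain $A(f) \cap J(f) \subset I(f) \cap J(f) \subset J(f)$: any point lies in a component of $I(f) \cap J(f)$ that, being a subset of $J(f)$, is joined to the already-connected skeleton $\bigcup_n \partial H_n$ through an analogous unboundedness argument.

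The principal technical obstacle is the construction of $H_n$: one must verify that adjoining the possibly countable family $\mathcal{V}_n$ and filling in bounded complementary components yields a bounded simply connected domain whose boundary is purged of Fatou component points. Granted this, the remaining arguments are direct applications of Theorems~\ref{main2} and~\ref{main3}.
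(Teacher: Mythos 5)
Your proof of part~(b) rests on a false statement. You claim that ``any Fatou component meeting $A(f)$ is a Baker wandering domain, which is bounded and multiply connected.'' What is actually known (the corollary to Lemma~\ref{dist}) is only that such a component is a \emph{wandering domain}: Bergweiler has constructed simply connected wandering domains whose closures lie in $A(f)$, and whether an \emph{unbounded} Fatou component can meet $A(f)$ is precisely Question~1 of this paper, i.e.\ open. So no contradiction arises from $\overline{U}\subset A_R(f)$, and part~(b) as written does not go through. The step can be repaired, but only by exploiting the \emph{levels}: by Lemma~\ref{SW1}(c) (or Lemma~\ref{SW3}(a)) each $A_R^n(f)$ is also a spider's web, so an unbounded connected $U$ meets a loop contained in $A_R^n(f)$ for every large $n$; Theorem~\ref{main2} then forces $\overline{U}\subset A_R^{\,n-1}(f)\subset\{z:|z|\ge M^{n-1}(R)\}$ for every such $n$, which is absurd. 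Using only $\partial G_n\subset A_R(f)$, as you do, is not enough. (The paper instead derives (b) from (a): when there are no multiply connected components, $J(f)$ is a spider's web, so every Fatou component sits in a bounded complementary domain of it; when there is one, Lemma~\ref{Baker} applies. Your plan of proving (b) first and feeding it into (a) therefore inverts the logical order that actually works.)

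Part~(a) also has an unresolved gap that you yourself flag: the family $\mathcal{V}_n$ of Fatou components meeting $\partial G_n$ may be infinite, and a union of infinitely many bounded components need not be bounded, so your $H_n$ is not known to be a bounded domain. This whole construction is unnecessary: under the hypothesis of no multiply connected components, Corollary~\ref{AJF} gives $\partial A_R^n(f)\subset J(f)$ directly, so the fundamental loops $L_n=\partial H_n$ (with $H_n$ the component of $A_R^n(f)^c$ containing $0$) already satisfy $L_n\subset A_R^n(f)\cap J(f)\subset A_R(f)\cap J(f)$ with no purging of Fatou points required; combined with Theorem~\ref{main3} (all components of $A_R^L(f)\cap J(f)$ unbounded) this yields the spider's webs $A_R(f)\cap J(f)$ and $A(f)\cap J(f)$. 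Finally, your treatment of $I(f)\cap J(f)$ and $J(f)$ is too vague: Theorem~\ref{main3} says nothing about components of these sets, and the appeal to ``$J(f)$ is connected is a standard fact'' is not justified here. The correct argument is the sandwich $A(f)\cap J(f)\subset I(f)\cap J(f)\subset J(f)=\overline{A(f)\cap J(f)}$ from Theorem~\ref{EJ}(c): any set lying between a connected set and its closure is connected, and it still contains the loops $L_n$, hence is a spider's web.
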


Part (b) of Theorem~\ref{main6} allows us to make a connection between the existence of an $A_R(f)$ spider's web and a conjecture of Baker about functions of small growth; see Section 8.

We also use the concept of the levels of $A(f)$ to prove the
following result about the complement of $A(f)$.

\begin{theorem}\label{holes}
Let $f$ be a {\tef}, let $R>0$ be such that $M(r,f) > r$ for $r\geq R$, and let $A_R(f)$ be a spider's web.
\begin{itemize}
\item[(a)] All the components of $A(f)^c$ are compact.
\item[(b)] Every point of $J(f)$ is the limit of a sequence of points, each of which lies in a distinct component of $A(f)^c$.
\end{itemize}
\end{theorem}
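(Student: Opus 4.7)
The plan is as follows. Recall that each level $A_R^{-L}(f)=f^{-L}(A_R(f))$ is itself a {\sw} sharing the exhausting sequence $(G_n)$ with $A_R(f)$; in particular each $A_R^{-L}(f)$ is connected, and each bounded component of $A_R^{-L}(f)^c$ is a simply connected open domain, by the argument behind Theorem~\ref{main4}.

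For part (a), let $E$ be a component of $A(f)^c$. Since each loop $\partial G_n\subset A_R(f)\subset A(f)$ is disjoint from $A(f)^c$ and $E$ is connected, $E$ is contained in some $G_n$ and therefore bounded. To see $E$ is closed in $\C$, let $E_L$ denote the component of $A_R^{-L}(f)^c$ containing $E$; the $E_L$ are bounded simply connected open domains, decrease with $L$, and satisfy $\partial E_L\subset A_R^{-L}(f)\subset A(f)$. Because $E$ is closed in the relative topology of $A(f)^c$, any $w\in\bar E\setminus E$ must lie in $A(f)$. Now $w$ cannot lie in a Fatou component $U$: Theorem~\ref{main2}(a) would give $\bar U\subset A(f)$ and hence an open neighborhood of $w$ inside $A(f)$, contradicting $w\in\bar E$ and $E\subset A(f)^c$. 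So $w\in J(f)\cap A(f)$, say $w\in A_R^{-L_0}(f)$, and then $w\in\partial E_L$ for all $L\geq L_0$. The main obstacle is ruling out this remaining possibility: the idea is to push the picture forward by $f^{L_0}$, so that $f^{L_0}(w)\in A_R(f)$, while for any sequence $u_k\in E$ with $u_k\to w$ the images $f^{L_0}(u_k)\in A_R(f)^c$ converge to $f^{L_0}(w)$, and then to exploit the global {\sw} structure of $A_R(f)$ (rather than mere connectedness) to separate $f^{L_0}(w)$ from the approximating sequence by a loop of $A_R(f)$, giving the required contradiction. Once this is done, $\bar E=E$ and $E$ is compact.

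For part (b), fix $z\in J(f)$ and $\eps>0$. Since $\{f^n\}$ is not normal at $z$, Montel's theorem yields some $n$ with $f^n(B(z,\eps))$ omitting at most two points of $\C$. By (a) every component of $A(f)^c$ is compact, and $A(f)^c$ has infinitely many components: $J(f)\cap A(f)^c$ contains all repelling periodic points of $f$ and is therefore unbounded, so cannot be covered by finitely many compact sets. Choose infinitely many components $E_1,E_2,\ldots$ of $A(f)^c$ avoiding the two exceptional values, pick $p_i\in E_i$, and take preimages $w_i\in B(z,\eps)$ with $f^n(w_i)=p_i$; backward invariance of $A(f)$ gives $w_i\in A(f)^c$. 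Let $C_i$ be the component of $A(f)^c$ containing $w_i$. Since $A(f)^c$ is forward invariant (complete invariance of $A(f)$), $f^n(C_i)$ is a connected subset of $A(f)^c$ containing $p_i\in E_i$, so $f^n(C_i)\subset E_i$; hence $C_i=C_j$ would force $E_i=E_j$, a contradiction. The $C_i$ are therefore pairwise distinct, and diagonalising over $\eps=1/k$ produces a sequence in distinct components of $A(f)^c$ converging to $z$.
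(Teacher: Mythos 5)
In part (a) your reduction is sound up to the crux, and then the crux is missing. Boundedness of a component $E$ of $A(f)^c$ is fine, as is the observation that any $w\in\overline{E}\setminus E$ must lie in $A(f)\cap J(f)$ (the Fatou case being excluded via Theorem~\ref{main2}(a)), whence $w\in\partial E_L$ for all $L\geq L_0$. But the step you label ``the main obstacle'' is exactly the content of the theorem, and the idea you sketch for it cannot work: since $f^{L_0}(u_k)\to f^{L_0}(w)$, no Jordan curve avoiding $f^{L_0}(w)$ can separate $f^{L_0}(w)$ from more than finitely many of the $f^{L_0}(u_k)$, and the mere fact that a point of $A_R(f)$ is a limit of points of $A_R(f)^c$ is no contradiction ($\partial A_R(f)\neq\emptyset$). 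The missing ingredient is the dynamics of the fundamental holes and loops: by Lemma~\ref{SW3}(e) every bounded component of $A_R^{L}(f)^c$ eventually maps \emph{onto} a fundamental hole, and by Lemma~\ref{SW3}(d) fundamental loops whose levels differ by at least some fixed $N$ are disjoint. From your own setup this finishes the proof at once: $w\in\partial E_{L_0}\cap\partial E_{L_0+N}$ would give, for large $n$, $f^n(w)\in L_{n-L_0}\cap L_{n-L_0-N}=\emptyset$. The paper packages the same fact as $\overline{G_{L+N}}\subset G_L$, so that $K=\bigcap_L G_L=\bigcap_L\overline{G_L}$ is a nested intersection of compact connected sets, hence compact, and equals $K$ by maximality. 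Without Lemma~\ref{SW3}(d),(e) (or an equivalent) your argument does not close.

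Part (b) takes a genuinely different route from the paper's (which uses $J(f)=\partial A(f)$, $J(f)=\overline{A(f)\cap J(f)}$ and part (a) directly), and your route is viable, but two steps are misstated. Montel's theorem gives that $\bigcup_n f^n(B(z,\eps))$ omits at most two points; it does not give a single $n$ for which $f^n(B(z,\eps))$ does so. What you need is the blow-up property of $J(f)$: for any \emph{finite} set of points avoiding the exceptional set there is a single $n$ whose image of $B(z,\eps)$ contains them all. Relatedly, you cannot pull back infinitely many prescribed points $p_i$ with one iterate; but you do not need to --- at stage $k$ of the diagonalisation you only need one component of $A(f)^c$ meeting $B(z,1/k)$ and distinct from the $k-1$ already chosen, so pulling back $k$ points at a time suffices. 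Your argument that distinct targets force distinct source components (via forward invariance of $A(f)^c$) does require that the same iterate $f^n$ be used for all points in a given batch, which the finite version supplies. With these repairs part (b) is correct, though note it still relies on part (a) for the closedness of components and for the existence of infinitely many of them.
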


The property of $A(f)$ proved in part (a) of this result contrasts with the fact that all the components of $A_R^L(f)^c$, $L \in \Z$, are open. This result demonstrates that, if $A_R(f)$ is a spider's web, then $A(f)$ has a very intricate structure. Further results about the intricate nature of $A(f)$ when $A_R(f)$ is a spider's web have been obtained by Osborne~\cite{Os}; for example, the result in part~(b) can be strengthened to state that singleton components of $A(f)^c$ are dense in $J(f)$.

We end Section 7 by proving the following result whose proof also uses the concept of the levels of $A(f)$.

\begin{theorem}\label{main7}
Let $f$ be a {\tef}, let $R>0$ be such that $M(r,f) > r$ for $r\geq R$, and let $A_R(f)$ be a spider's web.
\begin{itemize}
\item[(a)]
Each point in $I(f)$
belongs to an unbounded continuum in $I(f)$ on which all points
escape to infinity uniformly.

\item[(b)] If $K$ is a component of $A(f)^c$, then either $K \cap
I(f) = \emptyset$ or all points in $K$ escape to infinity uniformly.
\end{itemize}
\end{theorem}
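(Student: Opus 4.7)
I would first prove part~(b), as it is needed for the hard case of part~(a). Let $K$ be a component of $A(f)^c$; by Theorem~\ref{holes}(a), $K$ is compact. Suppose $z_0 \in K \cap I(f)$ and, arguing by contradiction, that escape on $K$ is not uniform: there exist $R_0 > 0$, indices $n_k \to \infty$, and points $w_k \in K$ with $|f^{n_k}(w_k)| \leq R_0$. Since $|f^{n_k}(z_0)| \to \infty$ and $K$ is connected, the continuum $f^{n_k}(K)$ reaches both modulus $\leq R_0$ and arbitrarily large modulus. Choose a spider's web domain $G_m$ with $d_m := \min_{z \in \partial G_m}|z| > R_0$ (possible since $d_m \to \infty$). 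For $k$ large, $f^{n_k}(K)$ has points in $\{|z| < d_m\} \subset G_m$ and points outside $\overline{G_m}$, so by connectedness $f^{n_k}(K) \cap \partial G_m \neq \emptyset$. But $\partial G_m \subset A_R(f) \subset A(f)$, while $f^{n_k}(K) \subset A(f)^c$ by the complete invariance of $A(f)$, giving the desired contradiction.

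For part~(a), let $z_0 \in I(f)$. If $z_0 \in A(f)$, choose $L \in \N$ with $z_0 \in A_R^{-L}(f)$; by Theorem~\ref{main1} the component $X$ of $A_R^{-L}(f)$ containing $z_0$ is closed and unbounded, and the defining inequality $|f^n(z)| \geq M^{n-L}(R,f)$ for $n \geq L$ provides uniform escape on $X$, so $X$ is the required continuum.

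The remaining case $z_0 \in I(f) \setminus A(f)$ is the main obstacle. Here $z_0$ lies in a compact component $K$ of $A(f)^c$ by Theorem~\ref{holes}(a), on which escape is uniform by~(b); the issue is that $K$ itself is bounded. My plan is to attach $K$ to an unbounded continuum via a Hausdorff-limit construction: each $p \in \partial K$ belongs to $\overline{A(f)}$ (otherwise $p$ would have a neighborhood contained in $A(f)^c$, placing $p$ in the interior of $K$), so there exist $p_j \in A(f)$ with $p_j \to p$. By Theorem~\ref{main1}, each $p_j$ lies in an unbounded closed connected component $C_j$ of some $A_R^{-L_j}(f)$; adjoining $\infty$ gives continua $C_j \cup \{\infty\}$ in $\hat{\C}$, and by compactness in the Hausdorff metric a subsequence converges to a continuum $C^* \subset \hat{\C}$ containing both $p$ and $\infty$. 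Then $K \cup (C^* \setminus \{\infty\})$ is closed, connected (the two pieces meet at $p$), unbounded, and contains $z_0$.

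The delicate step is verifying that $C^* \setminus \{\infty\} \subset I(f)$ with uniform escape, since the levels $L_j$ must tend to infinity (else $p \in A_R^{-L_0}(f) \subset A(f)$, contradicting $p \in \partial K$), so uniformity is not inherited trivially from the $C_j$'s. The plan to overcome this is to exploit the spider's web structure directly: track the forward orbit of $z_0$ through the nested annular regions $G_{j_n+1} \setminus \overline{G_{j_n}}$ (with $j_n \to \infty$), so that the compact components of $A(f)^c$ meeting these orbits are confined between the loops $\partial G_{j_n}, \partial G_{j_n+1} \subset A_R(f)$; choosing the $C_j$'s to respect this confinement allows the uniform escape bounds coming from the spider's web loops to descend to the Hausdorff limit $C^*$, yielding uniform escape on $X = K \cup (C^* \setminus \{\infty\})$.
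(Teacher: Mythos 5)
Your proof of part~(b) is correct and is a genuinely different, more direct argument than the paper's: the paper obtains~(b) only as a by-product of its construction for~(a), by showing $f^n(\overline{G_L})\subset\C\setminus H_m$ for the components $G_L$ of $A_R^{-L}(f)^c$ containing $K$ and intersecting over $L$, whereas your argument (compactness of $K$ from Theorem~\ref{holes}, complete invariance of $A(f)^c$, and the fact that a continuum $f^{n_k}(K)$ stretching from $\{|z|\le R_0\}$ to near $\infty$ must cross a loop $\partial G_m\subset A_R(f)$) is shorter and self-contained. Your treatment of part~(a) when $z_0\in A(f)$ also matches the paper.

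The case $z_0\in I(f)\setminus A(f)$, however, contains a genuine gap. Your Hausdorff-limit continuum $C^*$ need not lie in $I(f)$ at all. The points of $C^*$ are limits of points $c_j\in C_j\subset A_R^{-L_j}(f)$, and since necessarily $L_j\to\infty$, the only information carried to the limit is $|f^n(c_j)|\ge M^{n-L_j}(R)$, which for each fixed $n$ becomes vacuous. The union $\bigcup_L A_R^{-L}(f)=A(f)$ is not closed (only each individual level is), and a Hausdorff limit of continua in $A(f)$ merely lands in $\overline{A(f)}\supset J(f)$; it can perfectly well pass through repelling periodic points. Your proposed repair---confining the $C_j$ to annular regions between spider's web loops---controls only where these sets sit in the plane, not the orbits of their points, so it cannot by itself force escape, let alone uniform escape; this is precisely the hard part of the theorem and it is left unproved. (A secondary, fixable point: $C^*\setminus\{\infty\}$ need not be connected, so you would have to pass to the component containing $p$ and invoke a boundary-bumping lemma as in the first proof of Theorem~\ref{main1}.) The paper avoids limits altogether: it builds the continuum explicitly as $K\cup\bigcup_{L}K_L$ with $K_1=A_R^{-1}(f)$ and $K_{L+1}=A_R^{-(L+1)}(f)\cap\overline{G_L}$, where $G_L$ is the component of $A_R^{-L}(f)^c$ containing $K$. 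Each piece lies in a single level set, so escape is immediate pointwise; connectedness comes from $\partial G_L\subset K_L\cap K_{L+1}$ together with the fact that each level is itself a spider's web; and uniformity is then proved by comparing the images $f^n(\overline{G_L})$ and $f^n(K_{L+1})$ with the fundamental holes $H_m$, splitting into the ranges $n-L>m$ and $n-L\le m$. To complete your argument you would need to replace the Hausdorff limit by some such level-by-level construction.
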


{\it Remark.} Theorem~\ref{main7} part (a) answers a question asked by Rempe in~\cite{lR07}. It is shown in~\cite{RRRS} that the conclusion of this part holds for many functions in the Eremenko-Lyubich class $\B$ (defined in Section 3). Rempe~\cite{lR09} has shown the existence of a {\tef} in the class $\B$ for which the conclusion of Theorem~\ref{main7} part (a) does not hold.\\

These results show that, if $A_R(f)$ is a spider's web, then
$f$ has many interesting dynamical properties. In Section 8, we investigate which families of functions have these properties. Previously it was
thought that it is relatively unusual for the escaping set to have this structure. Indeed, the following result shows that many commonly studied entire functions do {\it not} have an $A_R(f)$ spider's web.

\begin{theorem}\label{SW2}
Let $f$ be a {\tef}, let $R>0$ be such that $M(r,f) > r$ for $r\geq R$, and let $A_R(f)$ be a {\sw}. Then there is no path to infinity on which $f$ is bounded and so
\begin{itemize}
\item[(a)] $f$ does not belong to the class $\B$;
\item[(b)] $f$ has no exceptional points (that is, points with a finite backwards orbit).
\end{itemize}
\end{theorem}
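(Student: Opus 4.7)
The plan is to argue the main statement by contradiction: suppose there is a path $\gamma:[0,\infty)\to\C$ with $\gamma(t)\to\infty$ and $|f\circ\gamma|\le K$ for some $K>0$, and derive a contradiction. The key step will be a lemma extending the spider's web property to every level: for each $L\in\N$, every component of $A_R^L(f)^c$ is bounded. Granting this, one chooses $L$ so large that $M^{L+1}(R,f)>K$; since $z\in A_R^L(f)$ implies $|f(z)|\ge M^{L+1}(R,f)>K$, the path $\gamma$ cannot meet $A_R^L(f)$, so $\gamma$ is an unbounded connected subset of $A_R^L(f)^c$, contradicting the lemma.

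To prove the lemma, I will construct, from the given spider's web $\{G_n\}$ for $A_R(f)$, bounded simply connected domains $H_n^{(L)}:=\C\setminus U_n^{(L)}$, where $U_n^{(L)}$ is the unbounded component of $\C\setminus f^L(G_n)$, and show that $\{H_n^{(L)}\}$ exhausts $\C$ with $\partial H_n^{(L)}\subset A_R^L(f)$. The image $f^L(G_n)$ is a bounded open connected set (since $f^L$ is open and $G_n$ is bounded and connected), so $H_n^{(L)}$ is bounded, and viewing the construction on the Riemann sphere shows that $H_n^{(L)}$ is simply connected. The forward-invariance relation $f^L(A_R(f))\subset A_R^L(f)$ (immediate from the definitions), combined with the inclusion $\partial f^L(G_n)\subset f^L(\partial G_n)$ (valid for bounded $G_n$ and open $f^L$), yields $\partial H_n^{(L)}\subset A_R^L(f)$. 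The domains are nested, and since $f^L(\C)$ omits at most one Picard exceptional value $a$ of $f^L$, which for $n$ large enough lies in a bounded hole of $f^L(G_n)$ (because any small circle around $a$ is eventually contained in $f^L(G_n)$ by a compactness argument), we get $\bigcup_n H_n^{(L)}=\C$. If $V$ is any component of $A_R^L(f)^c$, then $V$ is disjoint from each $\partial H_n^{(L)}$, so it lies entirely inside $H_n^{(L)}$ or entirely outside $\overline{H_n^{(L)}}$; the second alternative cannot hold for every $n$ (else $V\subset\bigcap_n(\C\setminus\overline{H_n^{(L)}})=\emptyset$), so $V\subset H_n^{(L)}$ for some $n$ and is bounded. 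The main technical obstacle is verifying the properties of the $H_n^{(L)}$, especially the eventual enclosure of the Picard exceptional point.

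Finally, (a) and (b) both follow from the main statement. For (a), if $f\in\B$ and $R_0$ exceeds every singular value, the components of $\{|f|>R_0\}$ are logarithmic tracts on which $f\to\infty$, while $\{|f|\le R_0\}$ has an unbounded component (since $f$ is transcendental), giving a path to infinity with $|f|\le R_0$ and contradicting the main conclusion. For (b), an exceptional point $a$ makes $f-a$ an entire function with only finitely many zeros, so by Hadamard factorisation $f-a=Pe^h$ with $P$ a polynomial and $h$ entire; thus $0$ is a Picard exceptional value of $f-a$ and, by Iversen's theorem, is an asymptotic value of $f-a$, producing a path to infinity along which $f\to a$ and $|f|$ is bounded, again contradicting the main conclusion.
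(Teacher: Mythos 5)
Your overall strategy is the paper's. The key lemma you isolate --- that the spider's web property of $A_R(f)$ forces every component of $A_R^L(f)^c$ to be bounded for every $L$ --- is Lemma~\ref{SW1} parts (b) and (c), which the paper obtains more quickly by observing that $f^L$ maps a bounded component of $A_R(f)^c$ onto a bounded component of $A_R^L(f)^c$ and then invoking the unboundedness of the components of $A_R^L(f)$ (Theorem~\ref{main1}); your direct construction of the exhausting domains $H_n^{(L)}=\widetilde{f^L(G_n)}$ is correct but re-proves this. Your deduction of the main statement from the lemma is the paper's fundamental-loops argument in slightly different packaging, and your part (b) (finitely many zeros of $f-a$, Hadamard factorisation, Iversen's theorem making $a$ an asymptotic value) is essentially identical to the paper's.

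The one genuine gap is in part (a). You assert that $\{z:|f(z)|\le R_0\}$ has an unbounded component ``since $f$ is transcendental''. That is false in general: Picard's theorem gives only that this set is unbounded, not that it has an unbounded component, and for the very functions this theorem concerns --- for instance those exhibiting the pits effect in Section 8 --- the set $\{|f|\le R_0\}$ breaks into infinitely many bounded pieces; ruling out an unbounded piece is precisely the content of the main statement. For $f\in\B$ the conclusion you need is true, but the correct reason is the tract structure itself: the boundary of each logarithmic tract over $\{|w|>R_0\}$ is an unbounded curve on which $|f|\equiv R_0$, and this is the ``path to infinity on which $|f|$ is constant'' that the paper cites from \cite{EL92}. (A minor further slip: $f$ does not tend to infinity on a tract --- it is a universal covering of $\{|w|>R_0\}$ there --- but this plays no role in the argument.) With that correction, part (a) is sound.
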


However, we show that there are in fact many functions for which $A_R(f)$ is a spider's web. Thus, for all these functions, the sets $A_R(f)$, $A(f)$ and $I(f)$ are connected, and $f$ has no unbounded Fatou components, and so both Eremenko's conjecture and Baker's conjecture hold.

\begin{theorem}\label{main8}
Let $f$ be a {\tef} and let $R>0$ be such that $M(r,f) > r$ for $r\geq R$. Then $A_R(f)$ is a spider's web if one of the
following holds:
\begin{itemize}
\item[(a)] $f$ has a multiply connected Fatou component;
\item[(b)] $f$ has very small growth; that is,
 there exist $m \geq 2$ and $r_0>0$ such that
\begin{equation}
   \log \log M(r,f) < \frac{\log r}{ \log^m r},
    \; \mbox{ for } r>r_0,
\end{equation}
where $\log^m$ is the $m$th iterated logarithm;
\item[(c)] $f$ has order less than $1/2$ and regular growth;
\item[(d)] $f$ has finite order, Fabry gaps and regular growth;
\item[(e)] $f$ has a sufficiently strong version of the pits effect and has regular growth.
\end{itemize}
\end{theorem}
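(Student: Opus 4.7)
The overall plan is to reduce each of (a)--(e) to Theorem~\ref{main4} by exhibiting a bounded component of $A_R(f)^c$. The argument in (a) is geometric, exploiting the structure of multiply connected Fatou components, while the proofs in (b)--(e) share a common template: combine a classical minimum modulus estimate with the growth hypothesis to produce nested Jordan curves lying in $A_R(f)$.

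For part (a), let $U$ be a multiply connected Fatou component of $f$. By Baker's theorem $U$ is bounded, and it is well known that $U \subset A(f)$, so $U \subset A_R^{-L_0}(f)$ for some $L_0 \geq 0$. Theorem~\ref{main2}(a) gives $\overline{U} \subset A_R^{-L_0-1}(f)$, from which a direct computation with the definition of the levels shows that $\overline{f^n(U)} \subset A_R(f)$ for all $n \geq L_0 + 1$. Since $f^n(U)$ is itself multiply connected and, for large $n$, encircles $\{|z|\leq R\}$, the continuum $\overline{f^n(U)}$ has a bounded complementary component $V$ with $\partial V \subset A_R(f)$. Either by the density of repelling periodic points in $J(f)$ or by the classification of Fatou components, $V$ must contain a point outside $I(f) \supset A_R(f)$, and the component of $A_R(f)^c$ containing this point is confined to $V$ and hence bounded.

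For parts (b)--(e), the aim is to construct a sequence of radii $r_n \to \infty$, $r_0 \geq R$, with each circle $\{|z| = r_n\}$ contained in $A_R(f)$; since $\bigcup_n \{|z| < r_n\} = \C$, the complement $A_R(f)^c$ will then be a union of bounded pieces and Theorem~\ref{main4} applies. I would choose the $r_n$ inductively so that
\[
m(r_n, f) := \min_{|z|=r_n}|f(z)| \geq r_{n+1} \geq M^{n+1}(R, f),
\]
and, crucially, so that a matching lower bound persists on the annular regions between consecutive good radii, ensuring that iterates of a point on $\{|z| = r_n\}$ remain trapped on the chain. The four minimum modulus estimates used are: a Wiman--Macintyre-type theorem for very small growth in (b); the $\cos\pi\rho$-theorem in (c); a minimum modulus theorem for entire functions with Fabry gaps in (d); and the pits-effect hypothesis directly in (e). In (c)--(e), the regular growth assumption is what ensures that good radii are dense enough on a logarithmic scale for the chain to continue indefinitely.

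The main obstacle is precisely this inductive chaining: the classical minimum modulus theorems typically furnish the estimate $m(r,f) \geq M(r,f)^c$ only on certain sparse sequences of radii, so one must argue that between any $r_n$ and $M(r_n, f)$ a further good radius can be found, and that the estimate controls $|f|$ not only at those radii but also on the annular regions in between. This is exactly where each of the specific growth hypotheses in (b)--(e) is needed, and while the underlying strategy is uniform, the verification must be carried out separately for each class.
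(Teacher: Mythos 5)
Part (a) of your argument is essentially the paper's (Theorem~\ref{Baker1} plus Lemma~\ref{Baker} plus Theorem~\ref{main4}) and is fine, although the last step is simpler than you make it: since $A_R(f)\subset\{z:|z|\geq R\}$ by~\eqref{A1}, the origin itself lies in $A_R(f)^c$, and for large $n$ the loop $f^n(\partial U)\subset A_R(f)$ surrounds it, so the component of $A_R(f)^c$ containing $0$ is bounded; there is no need to hunt for non-escaping points in $V$.

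For (b)--(e) there is a genuine gap, and it is exactly at the point you flag as ``the main obstacle''. Your mechanism requires each circle $\{|z|=r_n\}$ to lie in $A_R(f)$, and for that you would need lower bounds for $|f|$ on the whole annuli between good radii: a point $z$ with $|z|=r_n$ maps to some point of modulus at least $m(r_n)$, but that image may sit at a radius where the minimum modulus is tiny, and none of the hypotheses in (b)--(e) rules this out. The $\cos\pi\rho$ theorem, Fuchs's theorem for Fabry gaps and the Littlewood--Offord pits estimates all control $m(r)$ only outside exceptional sets of radii, so the circles $\{|z|=r_n\}$ are in general \emph{not} contained in $A_R(f)$, and no amount of case-by-case verification will make them so. The paper's resolution (Theorem~\ref{gamman} and Corollary~\ref{m(r)}) dispenses with this requirement entirely: it suffices to have discrete radii $\rho_n>M^n(R)$ with $m(\rho_n)\geq\rho_{n+1}$, because then any compact connected subset $K$ of the component of $A_R(f)^c$ containing $\{|z|<R\}$ that reaches the first circle has images $f^n(K)$ reaching each subsequent circle, and the nested compact sets $K_n=\{z\in K:|f^n(z)|\geq M^n(R)\}$ have nonempty intersection, producing a point of $A_R(f)$ inside $A_R(f)^c$ --- a contradiction that bounds the component. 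You should also adjust your account of the hypotheses: the minimum modulus theorems supply, for \emph{every} large $r$, a good radius $\rho\in(r,r^m)$ with $m(\rho)\geq M(r)$ (Corollary~\ref{regular}(a)), while ``regular growth'' is a condition on $M$ itself, namely $M(r_n)\geq r_{n+1}^m$ with $r_n>M^n(R)$, which is what closes the chain $m(\rho_n)\geq M(r_n)\geq r_{n+1}^m>\rho_{n+1}$; it is not about the density of good radii.
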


Parts (a) to (c) follow from results in~\cite{RS07}
and~\cite{RS08}. Parts (d) and (e) are new -- there are many
functions which belong to these classes. (We will define Fabry gaps, regular growth and the pits effect
in Section 8.)

We conclude Section 8 by showing that $A_R(f)$ is a spider's web if and only if $A_R(f^m)$ is a spider's web. This enables us to construct many new examples of functions for which $A_R(f)$ is a spider's web, by considering iterates of the functions described in Theorem~\ref{main8}. In particular, this makes it easy to construct examples of such functions for which the order is infinite.

{\it Remark.} Finally, many of the new results about $A(f)$ obtained here can be generalised in some form to transcendental meromorphic functions with direct tracts; see~\cite{BRS} for results concerning the fast escaping set in a direct tract.

\section{Basic properties of $A(f)$}
\setcounter{equation}{0}
In this section we prove the basic properties of the fast escaping set and also show that there are several equivalent definitions of this set.

 Let $f$ be a {\tef} and let $r>0$. Recall that the fast escaping set $A(f)$ is defined in terms of the maximum modulus function $M(r,f)$. For simplicity, from now on we will write this as $M(r)$ provided that it is clear from the context which function $f$ is being considered. We often use the following facts about $M(r)$ without comment:

\begin{equation}\label{M1}
   \mbox{ if } |z| < M^m(r), \mbox{ for some } m \in \N, \mbox{ then } |f^n(z)| < M^{n+m}(r), \mbox{ for } n \in \N;
   \end{equation}
   \begin{equation}\label{M2}
 \log M(r) / \log r \to \infty \mbox{ as } r \to \infty.
 \end{equation}

 Many results in this paper use the following property of the maximum modulus function, which was proved in this form in~\cite[Lemma 2.2]{RS08}.

\begin{lemma}\label{convex}
Let $f$ be a {\tef}. Then there exists $R>0$ such that, for all $r \geq R$ and all $c>1$,
\[
 \log M(r^c) \geq c\log M(r).
\]
\end{lemma}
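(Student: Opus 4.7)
The plan is to deduce Lemma~\ref{convex} from the Hadamard three-circles theorem, which tells us that $\phi(t) := \log M(e^t,f)$ is a convex, continuous function of $t \in \R$. In these terms the conclusion to prove reads $\phi(ct) \geq c\phi(t)$ for all $t \geq T := \log R$ and all $c > 1$.

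The first step is to choose $T$ carefully. Since $f$ is transcendental, \eqref{M2} gives $\phi(t)/t \to \infty$ as $t \to \infty$, so the continuous function $t \mapsto \phi(t)/t$ attains its infimum over $[1,\infty)$ at some point $T \geq 1$. By construction,
$$\frac{\phi(t)}{t} \;\geq\; \frac{\phi(T)}{T} \quad \text{for all } t \geq T. \qquad (\ast)$$
I then set $R := e^T$, and this is the value that will witness the lemma.

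Next, for $t \geq T$ and $c > 1$ I would apply the three-circles inequality to the three radii $e^T < e^t < e^{ct}$, obtaining
$$\phi(t) \;\leq\; \frac{(c-1)t}{ct - T}\,\phi(T) \;+\; \frac{t - T}{ct - T}\,\phi(ct).$$
Solving for $\phi(ct)$ gives
$$\phi(ct) \;\geq\; \frac{(ct - T)\,\phi(t) \;-\; (c-1)t\,\phi(T)}{t - T},$$
and a short computation shows that the right-hand side is at least $c\phi(t)$ precisely when $(c-1)T\,\phi(t) \geq (c-1)t\,\phi(T)$; cancelling the positive factor $c-1$, this is exactly $(\ast)$.

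The main obstacle is really just the algebraic rearrangement of the three-circles inequality into an affine relation between $\phi(T)/T$ and $\phi(t)/t$, together with the key realisation that one does not need the stronger monotonicity statement that $\phi(t)/t$ is non-decreasing beyond $T$ --- minimality of $\phi(t)/t$ at $T$ is enough. Once that observation is made, no further properties of $\phi$ beyond convexity and $\phi(t)/t \to \infty$ are needed, and the argument is routine.
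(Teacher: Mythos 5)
Your proof is correct; the paper does not reprove this lemma but simply cites \cite[Lemma 2.2]{RS08}, and the argument there is essentially the one you give, namely Hadamard convexity of $t\mapsto\log M(e^t)$ combined with $\log M(r)/\log r\to\infty$. The only loose end is the case $t=T$, where your interpolation formula has a vanishing denominator, but there the desired inequality $\phi(cT)\geq c\phi(T)$ is exactly $(\ast)$ applied at the point $cT$, so nothing is lost.
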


It follows from Lemma~\ref{convex} together with~\eqref{M2} that, if $k>1$, then
\begin{equation}\label{con}
\frac{M(kr)}{M(r)} \to \infty \mbox{ as } r \to \infty.
\end{equation}

Now let $R>0$ be such that $M(r,f) > r$ for $r\geq R$. Then
 \begin{equation}\label{MR}
   M^n(R) \to \infty \mbox{ as } n \to \infty.
 \end{equation}
 Recall that
\[
 A_R(f) = \{z: |f^n(z)| \geq  M^n(R), \mbox{ for } n \in \N\}.
\]
Also, for each $L \in \Z$,
\[
A_R^L(f) = \{z: |f^{n}(z)| \geq M^{n+L}(R), \mbox{ for } n \in \N, n+L \geq 0\}
\]
and the fast escaping set is
\begin{equation}\label{A(f)}
 A(f) = \bigcup_{L \in \N} A_R^{-L}(f).
\end{equation}

The following set relations follow immediately from the definition of $A_R^L(f)$ and are often used without comment:

\begin{equation}\label{A1}
A_R^L(f) \subset \{z: |z| \geq M^L(R)\}, \mbox{ for } L \geq 0;
\end{equation}

\begin{equation}\label{subset}
f(A_R^L(f)) \subset A_R^{L+1}(f) \subset A_R^L(f), \mbox{ for } L \in \Z.
\end{equation}

We now give the following elementary result concerning $A(f)$. Parts (a)--(c) were stated in~\cite{BH99} and part (d) was proved in~\cite{RS05}.

  \begin{theorem}\label{basic}
Let $f$ be a {\tef}, let $R>0$ be such that $M(r,f) > r$ for $r\geq R$, and let $A(f) = \bigcup_{L \in \N} A_R^{-L}(f)$. Then
\begin{itemize}
\item[(a)] $A(f)$ is completely invariant under $f$;
\item[(b)] $A(f)$ is independent of $R$;
\item[(c)] $A(f) \subset Z(f) = \{z: \frac{1}{n} \log \log |f^n(z)| \to \infty \mbox{ as } n \to \infty\}$;
\item[(d)] if $g = h^{-1}fh$, where $h(z) = az+b$, $a \neq 0$, then $A(f) = h(A(g))$.
\end{itemize}
\end{theorem}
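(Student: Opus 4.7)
My plan is to exploit the super-exponential growth of $n \mapsto M^n(R,f)$---made precise by \eqref{M2} and Lemma~\ref{convex}---so that bounded perturbations of the starting radius or of the first few iterates can always be absorbed into a shift of the level $L$. Throughout I use the nesting $f(A_R^L(f)) \subset A_R^{L+1}(f) \subset A_R^L(f)$ from \eqref{subset} together with \eqref{MR}. Part (a) then follows quickly: the forward inclusion $f(A(f)) \subset A(f)$ is immediate from \eqref{subset}. For the backward inclusion, if $f(z) \in A_R^{-L}(f)$, the defining inequality $|f^n(f(z))| \geq M^{n-L}(R)$ for $n \geq L$ reads, after reindexing $m = n+1$, $|f^m(z)| \geq M^{m-L-1}(R)$ for $m \geq L+1$, so $z \in A_R^{-(L+1)}(f) \subset A(f)$.

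For part (b), I compare two admissible values $R_1 < R_2$. Monotonicity of $M$ yields $A_{R_2}^{-L}(f) \subset A_{R_1}^{-L}(f)$; for the reverse inclusion of the union, \eqref{MR} supplies $N \in \N$ with $M^N(R_1) \geq R_2$, so $M^{n+N}(R_1) \geq M^n(R_2)$ for all $n$, and therefore $A_{R_1}^{-L}(f) \subset A_{R_2}^{-(L+N)}(f)$. For part (c) it suffices to show $\tfrac{1}{n}\log\log M^n(R) \to \infty$. Given any $A>1$, property \eqref{M2} provides $R_A$ with $M(r) \geq r^A$ for $r \geq R_A$; using (b) I enlarge $R$ so that $R \geq \max(R_A, e)$. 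Induction then gives $M^n(R) \geq R^{A^n}$, hence $\log\log M^n(R) \geq n\log A + \log\log R$. For $z \in A_R^{-L}(f)$ the bound $|f^n(z)| \geq M^{n-L}(R)$ yields $\liminf_n \tfrac{1}{n}\log\log|f^n(z)| \geq \log A$, and letting $A \to \infty$ completes (c).

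Part (d) is the main obstacle, since it requires relating $M(\cdot,g)$ and $M(\cdot,f)$. The conjugacy $g = h^{-1}fh$ yields $f^n \circ h = h \circ g^n$, i.e.\ $|g^n(z)| = |f^n(h(z)) - b|/|a|$, so the two orbit sequences differ by a fixed affine transformation that is negligible relative to super-exponential growth. Subharmonicity and the triangle inequality give
\[
 \frac{M(|a|r - |b|, f) - |b|}{|a|} \;\leq\; M(r,g) \;\leq\; \frac{M(|a|r + |b|, f) + |b|}{|a|}
\]
for $|a|r \geq |b|$. By \eqref{con} and Lemma~\ref{convex}, the additive constants inside $M$ and the factor $1/|a|$ outside are absorbed into a shift of the iterate count when these inequalities are iterated, so there exist $C \geq 1$ and $N_0 \in \N$ with $M^n(R,g) \leq M^{n+N_0}(CR,f)$, and a matching reverse inequality, for large $n$. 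Combining these comparisons with the orbit identity shows $h(z) \in A(f)$ if and only if $z \in A(g)$; the level shifts are absorbed harmlessly by the union in \eqref{A(f)} and by the independence of $R$ established in (b).
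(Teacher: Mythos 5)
Parts (a), (b) and (c) of your argument are correct and follow essentially the same lines as the paper: (a) is the reindexing implicit in \eqref{subset}, (b) is the paper's comparison of $A_{R_1}^{-L}(f)$ and $A_{R_2}^{-L}(f)$ via \eqref{MR}, and (c) is a mild variant of the paper's estimate showing $\tfrac1n\log\log M^n(R)\to\infty$.

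Part (d), however, contains a genuine gap at exactly the point you pass over most quickly. Your two-sided bound relating $M(r,g)$ to $M(|a|r\pm|b|,f)$ is correct, but the claim that iterating it produces constants $C\ge1$ and $N_0\in\N$ with $M^n(R,g)\le M^{n+N_0}(CR,f)$ \emph{and a matching reverse inequality} does not follow from \eqref{con} and Lemma~\ref{convex}. The obstruction is in the lower bound. A single step gives, for large $r$, something like $M(r,g)\ge M(\gamma r,f)$ with a fixed $\gamma\in(0,1]$ (or $M(r,g)\ge M(r^{1/4},f)$ if you absorb constants via Lemma~\ref{convex}); but when you iterate, each step shrinks the argument again, and the induction closes only in the form $M^n(r,g)\ge M^n(\gamma^n r,f)$ (base radius tending to $0$) or with a shift $N_0$ that grows with $n$. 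Lemma~\ref{convex} and \eqref{con} control $M$ from \emph{below} only when the argument is enlarged; they give no lower bound for $M(\gamma r,f)$ in terms of $M(r,f)$ (for $f(z)=e^z$ one has $M(\gamma r,f)=M(r,f)^{\gamma}$, and this loss compounds doubly exponentially under iteration). So the "absorption into a shift of the iterate count" is precisely the hard step, not a routine consequence of super-exponential growth.

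The paper sidesteps this entirely: its proof of (d) invokes the identity $A(f)=B(f)$ of Corollary~\ref{equal}, where $B(f)$ is defined via the sets $\widetilde{f^n(D)}$ and is therefore manifestly natural under affine conjugation. That identity in turn rests on Lemma~\ref{subsets}, whose key inclusion $\widetilde{f^n(D)}\supset\{z:|z|\le M^n(R/2,f)\}$ is proved using the Eremenko points of Theorem~\ref{E}, i.e.\ Wiman--Valiron theory. Some input of this strength (an assertion that a point of a fixed small disc has $n$th iterate of modulus at least $M^n(R/2,f)$) is what your argument needs in order to obtain the fixed-shift comparison of $M^n(R,g)$ with $M^n(R',f)$; without it, part (d) is not established.
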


{\it Remark.} The set $Z(f)$, which consists of the points that `zip' to infinity, was studied in~\cite{RS00}. (Some authors denote this set by $T(f)$.)

\begin{proof}

{\it (a)} The complete invariance of $A(f)$ under $f$ follows directly from~\eqref{A(f)} and~\eqref{subset}.

{\it (b)} We take $R'>R$. Clearly $A_{R'}^L(f) \subset A_R^L(f)$, for $L \in \Z$, and so
\[
  \bigcup_{L \in \N} A_{R'}^{-L}(f) \subset \bigcup_{L \in \N} A_R^{-L}(f).
\]
Now note that, by~\eqref{MR}, there exists $m \in \N$ such that $M^m(R) > R'$ and so
\[
\bigcup_{L \in \N} A_{R'}^{-L}(f) \supset \bigcup_{L \in \N} A_{M^m(R)}^{-L}(f)
= \bigcup_{L \in \N} A_R^{m-L}(f) \supset \bigcup_{L \in \N} A_R^{-L}(f).
\]
Together with~\eqref{A(f)}, these set relations show that $A(f)$ is independent of $R$.

 {\it (c)} To prove that $A(f) \subset Z(f)$, we use the fact that, since $f$ is a {\tef}, it follows from~\eqref{M2} and~\eqref{MR} that $\log M^{n+1}(R) / \log M^n(R) \to \infty$ as $n \to \infty$. Thus, for each $C>e$, there exists $N \in \N$ such that $\log M^{n+1}(R) > C\log M^n(R)$ for all $n \geq N$ and $\log M^N(R) \geq 1$. So, for $n > 2(N+L)$, $L \in \N$,
\[
\log M^{n-L}(R) > C^{n/2},
\]
and hence, if $z \in A_R^{-L}$, then
\[
\frac{1}{n} \log \log |f^n(z)| \geq \frac{1}{n} \log \log M^{n-L}(R) > \frac{1}{2} \log C.
\]
The result now follows since we can choose $C$ to be arbitrarily large.

{\it (d)} This property follows immediately from the equivalent definition of $A(f)$ as the set $B(f)$ given in~\cite{RS05}; see Corollary~\ref{equal}.
\end{proof}

It was shown in~\cite{BH99} that Eremenko's construction in~\cite{E} of points in $I(f)$  actually gives points that are in $A(f)$. In fact his construction, which is based on Wiman-Valiron theory, can readily be adapted to give points that are in $A(f)$ and also have other strong dynamical properties. We summarise the main properties of such points in the following result. Points $z'$ with these properties are particularly useful and we often refer to them as {\it Eremenko points}.

\begin{theorem}\label{E}
Let $f$ be a {\tef} and let $\eps \in (0,1)$. There exists $R>0$ such that, if $r>R$, then there exists
  \[
  z' \in \{z:r \leq |z| \leq r(1 + \eps)\} \cap A(f)
   \]
   with
   \[
    |f^n(z')| > M^n(r), \mbox{ for } n \in \N.
   \]
Further, we can choose $z'$ such that there exist sequences $(z_n)_{n \geq 0}$ and $(k_n)_{n \geq 0}$ and sets
\[
D_n = \{z: |z - z_n| < \frac{1}{4}\eps |z_n|\} \mbox{ and } A_{n+1} = \{z: \frac{1}{k_n} M(|z_n|) \leq |z| \leq k_nM(|z_n|)\}
\]
such that

\begin{itemize}
\item[(a)]
for $n \geq 0$,
\[
f^n(z') \in \overline{D_n}, \; f(D_n) \supset A_{n+1} \mbox{ and } |f(z_n)| = M(|z_n|)\geq M^{n+1}(|z_0|);
\]

\item[(b)] for $n \in \N$,
\[
 D_n \subset A_n \cap \{z: |z| \geq M(|z_{n-1}|)\};
\]

\item[(c)]$\{z'\} = \bigcap_{n = 0}^{\infty} B_n$, where $B_0 = \overline{D_0}$ and, for $n \in \N$, $B_n$ is a component of $f^{-n}(\overline{D_n})$;

\item[(d)] $(k_n)$ is a positive increasing sequence with $k_n \to \infty$ as $n \to \infty$.
\end{itemize}
\end{theorem}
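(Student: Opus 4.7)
The plan is to adapt Eremenko's construction of escaping points from~\cite{E}, which is based on Wiman-Valiron theory, so that the resulting point automatically satisfies all of (a)--(d). Wiman-Valiron theory furnishes an exceptional set $E\subset(0,\infty)$ of finite logarithmic measure with the property that, for every $r\notin E$ sufficiently large, if $|z_0|=r$ and $|f(z_0)|=M(r)$, then on the Wiman-Valiron disk $\Delta=\{z:|z-z_0|<|z_0|/N(r)^{\tau}\}$, for some fixed $\tau\in(1/2,1)$, one has
\[
 f(z)=\left(\frac{z}{z_0}\right)^{N(r)}f(z_0)\bigl(1+o(1)\bigr),
\]
where the central index $N(r)\to\infty$ as $r\to\infty$ through the complement of $E$. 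Because $E$ has finite logarithmic measure, for any $r$ sufficiently large I can pick $r_0\in[r(1+\eps/2),r(1+5\eps/8)]$ with $r_0\notin E$ and choose $z_0$ with $|z_0|=r_0$ and $|f(z_0)|=M(r_0)$.

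Next I would carry out the inductive construction. Setting $D_0=\{z:|z-z_0|<\eps|z_0|/4\}$, the monomial approximation above (applied on $\Delta\subset D_0$), combined with the argument principle, yields $f(D_0)\supset A_1$, where
\[
 A_1=\{w:M(|z_0|)/k_0\le|w|\le k_0 M(|z_0|)\}
\]
and $k_0$ can be taken of order $\exp(N(r_0)^{\sigma})$ for some fixed $\sigma>0$. This is the core covering statement: the high-degree monomial $(z/z_0)^{N(r_0)}$ wraps even a very small disk around $z_0$ many times around $f(z_0)=M(|z_0|)$, producing a thick annulus. Using the finite logarithmic measure of $E$ again, I select $r_1\in[M(|z_0|)(1+\eps/2),M(|z_0|)(1+5\eps/8)]\setminus E$ and a point $z_1$ on $|z|=r_1$ with $|f(z_1)|=M(r_1)$; for $k_0$ large enough $z_1$ lies in $A_1\cap\{|z|\ge M(|z_0|)\}$. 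Iterating yields sequences $(z_n),(D_n),(A_n),(k_n)$ with the $k_n$ chosen increasing to infinity (since $N(|z_n|)\to\infty$) and with $|f(z_n)|=M(|z_n|)\ge M^{n+1}(|z_0|)\ge M^{n+1}(r)$, together with the inclusions in (a)--(b).

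Finally, I extract $z'$ by a standard nested pullback. Since $f(D_n)\supset\overline{D_{n+1}}$, I set $B_0=\overline{D_0}$ and inductively take $B_{n+1}$ to be the component of $f^{-(n+1)}(\overline{D_{n+1}})$ that is contained in $B_n$ and maps onto $\overline{D_{n+1}}$ under $f^{n+1}$; the monomial approximation forces $\diam(B_n)\to0$, so $\bigcap_n B_n=\{z'\}$, which gives (c). Then $f^n(z')\in\overline{D_n}$ yields $|f^n(z')|\ge(1-\eps/4)|z_n|\ge(1-\eps/4)M^n(|z_0|)>M^n(r)$, the last inequality following from~\eqref{con} together with $|z_0|\ge r(1+\eps/2)$; similarly $r\le|z'|\le r(1+\eps)$ follows from $z'\in\overline{D_0}$ and the choice of $r_0$. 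The main obstacle is the covering claim $f(D_n)\supset A_{n+1}$ with $k_n\to\infty$: this is precisely where Wiman-Valiron is used, and it requires a careful balance between the small radius $|z_0|N(r_0)^{-\tau}$ (needed for the monomial approximation to remain valid) and the size of the annulus the image must cover in order to accommodate the next maximum modulus point $z_{n+1}$ with $|z_{n+1}|\ge M(|z_n|)$. Once this balance is correctly set up, the remaining verifications are essentially bookkeeping.
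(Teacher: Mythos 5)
Your proposal follows exactly the route the paper takes: the paper gives no detailed proof of this theorem but simply invokes Eremenko's Wiman--Valiron construction (as adapted in \cite{BH99}), i.e.\ the monomial approximation on the Wiman--Valiron disc, the resulting covering of thick annuli $A_{n+1}$ with $k_n\to\infty$, the inductive choice of maximum-modulus points $z_n$ outside the exceptional set, and the nested pullback defining $z'$ -- precisely the steps you outline. Your sketch is in fact more detailed than the paper's treatment; the only points needing care are routine (tightening the constants in the choice of $r_0$ so that $\overline{D_0}\subset\{r\le|z|\le r(1+\eps)\}$ for all $\eps\in(0,1)$, and justifying $\diam B_n\to0$ for the singleton claim in (c)), and these are standard parts of Eremenko's argument.
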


{\it Remark.} By modifying the definitions of the discs and annuli in Theorem~\ref{E} appropriately, each disc $D_n$ can be chosen to lie in a thin annulus $A_n'$ contained in and concentric with $A_n$. For any $\delta > 0$, the modulus of the annulus $A_n'$ can be chosen to be less than $\delta$ for $n$ sufficiently large. Note that $D_n$ can be allowed to lie in $\{z: |z| \leq M(|z_{n-1}|)\}$ but, with this modification, the point $z'$ does not necessarily satisfy $|f^n(z')| > M^n(r)$, for $n \in \N$.\\

     We now show how Eremenko points can be used to demonstrate that the definitions of the fast escaping set used in earlier papers are equivalent to the definition used in this paper. The original definition of the fast escaping set was given by Bergweiler and Hinkkanen in~\cite{BH99}. For clarity, we refer to the set given by their definition as $A'(f)$ until we have proved that it is equal to the set $A(f)$ defined earlier:
\[
 A'(f) = \{z: \mbox{there exists } L \in \N \text{ such that }
          |f^{n+L}(z)| > M(R,f^n), \text{ for } n \in \N\},
\]
where $R$ is any value such that $R>\min_{z \in J(f)}|z|$.

In~\cite{RS05} we showed that $A'(f)$ is equal to the set
\[
  B(f) = \{z: \mbox{there exists } L \in \N \text{ such that }
            f^{n+L}(z) \notin \widetilde{f^n(D)}, \text{ for } n \in
            \N\},
\]
where $D$ is any open disc meeting $J(f)$, and $\widetilde{U}$
denotes the union of $U$ and its bounded complementary
components.

We now show that all of these definitions of the fast escaping set are equivalent. In order to do this, we prove the following result.

\begin{lemma}\label{subsets}
Let $f$ be a {\tef} and let $D = \{z: |z| < R\}$. If $R>0$ is sufficiently large, then
\begin{eqnarray*}
\{z:|z| \leq M^n(R/2,f)\} & \subset & \widetilde{f^n(D)} \subset \{z: |z| \leq M(R,f^n)\}\\
 & \subset & \{z:|z| \leq M^n(R,f)\}\\
 & \subset & \{z: |z| < M^{n+1}(R,f)\}.
\end{eqnarray*}
\end{lemma}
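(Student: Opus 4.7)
The plan is to verify the four inclusions of the chain in order, handling the three outer (easy) inclusions directly and deriving the innermost one from Theorem~E.

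For the three outer inclusions, the reasoning is straightforward. The rightmost inclusion $\{|z|\leq M^n(R,f)\}\subset\{|z|<M^{n+1}(R,f)\}$ follows from the standing assumption $M(r,f)>r$ for $r\geq R$ applied at $r=M^n(R,f)$. The middle inclusion $\{|z|\leq M(R,f^n)\}\subset\{|z|\leq M^n(R,f)\}$ is the standard estimate $M(R,f^n)\leq M^n(R,f)$, obtained by the induction $|f^n(z)|\leq M(|f^{n-1}(z)|,f)\leq M(M^{n-1}(R,f),f)=M^n(R,f)$ for $|z|\leq R$, combined with monotonicity of $M$ in its first argument. Finally, $\widetilde{f^n(D)}\subset\{|z|\leq M(R,f^n)\}$ holds because $f^n(D)\subset\{|w|<M(R,f^n)\}$ by the maximum modulus principle, and any bounded complementary component of $f^n(D)$ is enclosed by $\partial f^n(D)\subset f^n(\partial D)$, which itself lies in the disc $\{|w|\leq M(R,f^n)\}$.

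For the innermost inclusion $\{|z|\leq M^n(R/2,f)\}\subset\widetilde{f^n(D)}$, I would invoke Theorem~E with $r=R/2$ and $\eps=1/2$, valid for $R$ sufficiently large (so that $R/2$ exceeds the threshold appearing in Theorem~E). The theorem furnishes $z_0$ with $R/2\leq|z_0|\leq\tfrac{3}{4}R$, discs $D_n$, and annuli $A_{n+1}=\{M(|z_n|)/k_n\leq|w|\leq k_n M(|z_n|)\}$ satisfying $f(D_n)\supset A_{n+1}\supset D_{n+1}$. Since $(1+\eps)(1+\eps/4)=\tfrac{27}{16}<2$ for $\eps=1/2$, one has $D_0\subset D$. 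Iterating gives $f^n(D_0)\supset A_n$ for $n\geq 1$. The bounded complementary component of $A_n$ is $\{|w|<M(|z_{n-1}|)/k_{n-1}\}$, so $\widetilde{A_n}=\{|w|\leq k_{n-1} M(|z_{n-1}|)\}$. The monotonicity $X\subset Y\Rightarrow\widetilde{X}\subset\widetilde{Y}$ (immediate from the corresponding monotonicity of the unbounded complementary component) yields $\widetilde{f^n(D)}\supset\widetilde{f^n(D_0)}\supset\widetilde{A_n}$. Using $|z_{n-1}|\geq M^{n-1}(|z_0|)\geq M^{n-1}(R/2)$ from Theorem~E(a) together with $k_{n-1}\geq 1$, we obtain $k_{n-1}M(|z_{n-1}|)\geq M(M^{n-1}(R/2))=M^n(R/2)$, which completes the inclusion. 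The case $n=0$ is trivial since $\{|z|\leq R/2\}\subset D=\widetilde{f^0(D)}$.

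The main obstacle is the innermost inclusion: one needs to exhibit, inside $f^n(D)$, a curve winding around the origin at radius at least $M^n(R/2)$ so that filling bounded complementary components produces the desired disc. Theorem~E supplies exactly such curves via its annuli $A_n$, whose winding around $0$ is built in through the Wiman--Valiron approximation that underpins the theorem. A purely topological approach avoiding Theorem~E would be substantially harder, since even the boundary curve $f(\{|z|=R\})$ need not surround the target disc---for instance, when $f=e^z$ this curve has winding number $0$ about the origin.
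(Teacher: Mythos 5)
Your proof is correct and follows the same route as the paper: the three outer inclusions are treated as routine, and the first inclusion is obtained from the Eremenko point of Theorem~\ref{E}, whose discs $D_n$ and annuli $A_n$ give $f^n(D)\supset f^n(D_0)\supset A_n$ and hence $\widetilde{f^n(D)}\supset\widetilde{A_n}=\{|z|\le k_{n-1}M(|z_{n-1}|)\}$. One small repair to your last estimate: applying Theorem~\ref{E} at $r=R/2$ only guarantees $|z_0|\ge (R/2)/(1+\eps/4)$, not $|z_0|\ge R/2$, so either apply the theorem at a slightly larger $r$ so that the whole neighbourhood $D_0$ lies in $\{R/2<|z|<R\}$ (as the paper's proof implicitly does), or observe directly that $k_{n-1}M(|z_{n-1}|)\ge |f^n(z')|>M^n(R/2)$ because $f^n(z')\in\overline{D_n}\subset A_n$.
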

\begin{proof}
The only set relation that is not clear in the above list is the first one. To prove this, we note that, by Theorem~\ref{E}, if $R$ is sufficiently large, then $\{z: R/2 < |z| < R\}$ contains a neighbourhood of an Eremenko point. The result now follows from the properties of such a point described in parts (a) and (c) of Theorem~\ref{E}.
\end{proof}

The equivalence of the various definitions of the fast escaping set follows immediately from Lemma~\ref{subsets}. Note that the last set relation in Lemma~\ref{subsets} is required since the definition of $A'(f)$ involves a strong inequality.

\begin{corollary}\label{equal}
Let $f$ be a {\tef}. Then $A(f) = A'(f) = B(f)$.
\end{corollary}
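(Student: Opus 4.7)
The plan is to read off the three nested inclusions of Lemma~\ref{subsets} as three set inclusions that close into a cycle $A(f) \subset A'(f) \subset B(f) \subset A(f)$. Throughout, I will fix a single $R > 0$ that is simultaneously (i) large enough for Lemma~\ref{subsets} to apply, (ii) greater than $\min_{z \in J(f)}|z|$, so that $R$ is admissible in the definition of $A'(f)$ and the disc $D = \{z : |z| < R\}$ meets $J(f)$ and is admissible in the definition of $B(f)$, and (iii) such that $M(r,f) > r$ for $r \geq R$. Because $A(f)$ is independent of $R$ by Theorem~\ref{basic}(b), I may write $A(f) = \bigcup_{L \in \N} A_R^{-L}(f)$ using this one $R$.

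For $A(f) \subset A'(f)$, I would take $z \in A_R^{-L}(f)$, so that $|f^{n+L}(z)| \geq M^n(R,f)$ for all $n \in \N$, and apply this inequality at index $n+1$; combined with the last two inclusions of Lemma~\ref{subsets}, which give $M^{n+1}(R,f) > M^n(R,f) \geq M(R,f^n)$, this yields $|f^{n+(L+1)}(z)| > M(R,f^n)$ for all $n$, so $z \in A'(f)$. For $A'(f) \subset B(f)$, if $|f^{n+L}(z)| > M(R,f^n)$ for all $n$, then the middle inclusion $\widetilde{f^n(D)} \subset \{|z| \leq M(R,f^n)\}$ of Lemma~\ref{subsets} forces $f^{n+L}(z) \notin \widetilde{f^n(D)}$, so $z \in B(f)$. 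For $B(f) \subset A(f)$, if $f^{n+L}(z) \notin \widetilde{f^n(D)}$ for all $n$, then the first inclusion $\{|z| \leq M^n(R/2,f)\} \subset \widetilde{f^n(D)}$ gives $|f^{n+L}(z)| > M^n(R/2,f)$ for all $n$; after reindexing this shows $z \in A_{R/2}^{-L}(f) \subset A(f)$, again using Theorem~\ref{basic}(b), now at the base $R/2$.

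I do not expect a serious obstacle, since Lemma~\ref{subsets} has already done the real work and the Corollary is essentially a bookkeeping exercise. The only delicate point is the strict-versus-nonstrict mismatch between definitions: $A'(f)$ asks for $|f^{n+L}(z)| > M(R,f^n)$ whereas $A_R^{-L}(f)$ only yields $|f^n(z)| \geq M^{n-L}(R,f)$, and this gap is absorbed precisely by the strict rightmost inclusion $M^n(R,f) < M^{n+1}(R,f)$ of Lemma~\ref{subsets}, at the cost of the unit shift of the level $L$ that appears in the first step above.
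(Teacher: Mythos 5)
Your proposal is correct and is essentially the paper's own argument: the paper's proof consists of the single remark that the corollary "follows immediately from Lemma~\ref{subsets}", with the final strict inclusion there included precisely to absorb the strict inequality in the definition of $A'(f)$, which is exactly the point you identify and handle via the unit shift of $L$. The cyclic chain $A(f)\subset A'(f)\subset B(f)\subset A(f)$ that you extract from the three nested inclusions is the intended bookkeeping.
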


{\it Remarks.} 1. The proof of Corollary~\ref{equal} is much simpler than the proof that $B(f) = A'(f)$ given in~\cite{RS05}.

2. In~\cite{RS07} we based the proofs of several results about $A(f)$ and $I(f)$ on the set
\[
  B_D(f) = \{ z: f^n(z)\notin \widetilde{f^n(D)}, \mbox{ for } n \in \N \},
\]
where $D$ is any open disc meeting $J(f)$. This set is analogous
to the set $A_R(f)$ but we do not have $f(B_D(f)) \subset
B_D(f)$ (compare with~\eqref{subset}) and so more complicated proofs are required when working
with $B_D(f)$ instead of $A_R(f)$.\\

We now use Lemma~\ref{subsets} to prove the following result.

\begin{theorem}\label{eq}
Let $f$ be a {\tef} and let $m \in \N$. If $R>0$ is sufficiently large, then
\[
 A_R(f) \subset A_R(f^m) \subset A_{R/2}(f)
\]
and hence $A(f) = A(f^m)$.
\end{theorem}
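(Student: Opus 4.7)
The plan is to establish the two set inclusions stated in the theorem and then to derive $A(f)=A(f^m)$ from these together with the complete invariance of $A(f)$ recorded in Theorem~\ref{basic}(a). The first inclusion $A_R(f)\subset A_R(f^m)$ is routine: iterating the elementary inequality $M(r,f^m)\leq M^m(r,f)$, which follows at once from $|f(w)|\leq M(|w|,f)$ and the maximum modulus principle, yields $M^n(R,f^m)\leq M^{mn}(R,f)$. Hence any $z$ satisfying $|f^n(z)|\geq M^n(R,f)$ for every $n$ automatically satisfies $|(f^m)^n(z)|=|f^{mn}(z)|\geq M^{mn}(R,f)\geq M^n(R,f^m)$.

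The main obstacle will be the second inclusion $A_R(f^m)\subset A_{R/2}(f)$, because the assumption $z\in A_R(f^m)$ only controls $|f^k(z)|$ along the arithmetic progression $k=mn$, while $z\in A_{R/2}(f)$ requires a lower bound at every $k$. First I would invoke Lemma~\ref{subsets} with $n$ replaced by $mn$; combined with the standard bound $M(R,(f^m)^n)\leq M^n(R,f^m)$ applied to the entire function $g=f^m$, this produces the reverse-type estimate
\[
M^{mn}(R/2,f)\leq M(R,f^{mn})\leq M^n(R,f^m),
\]
which already takes care of those indices $k$ that are multiples of $m$. To cover an intermediate index $k=mn+j$ with $0<j<m$, I would bridge $f^k(z)$ and $f^{m(n+1)}(z)$ via $f^{m-j}$: using $|f^{m-j}(w)|\leq M^{m-j}(|w|,f)$ and the identity $k+(m-j)=m(n+1)$, the chain
\[
M^{m-j}(|f^k(z)|,f)\geq |f^{m(n+1)}(z)|\geq M^{n+1}(R,f^m)\geq M^{m(n+1)}(R/2,f)=M^{m-j}(M^k(R/2,f),f)
\]
holds, and the strict monotonicity of $M^{m-j}(\cdot,f)$ then delivers $|f^k(z)|\geq M^k(R/2,f)$.

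To conclude $A(f)=A(f^m)$: since $A_R(f^m)\subset A_{R/2}(f)\subset A(f)$ and $A(f)$ is completely invariant under $f$, hence under $f^m$, each level $A_R^{-L}(f^m)=(f^m)^{-L}(A_R(f^m))$ is contained in $A(f)$, giving $A(f^m)\subset A(f)$. For the reverse inclusion I would argue directly: given $z\in A(f)$, choose $L$ with $|f^n(z)|\geq M^{n-L}(R,f)$ for $n\geq L$ and set $L'=\lceil L/m\rceil$, so that $mL'\geq L$. Using $M^j(R,f^m)\leq M^{mj}(R,f)$ together with the monotonicity of $M(\cdot,f)$ in its first argument, one obtains
\[
|f^{mn}(z)|\geq M^{mn-L}(R,f)\geq M^{m(n-L')}(R,f)\geq M^{n-L'}(R,f^m)\qfor n\geq L',
\]
which places $z$ in $A_R^{-L'}(f^m)\subset A(f^m)$.
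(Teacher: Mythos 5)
Your proposal is correct and follows essentially the same route as the paper: the first inclusion via $M^n(R,f^m)\leq M^{mn}(R,f)$, and the second via Lemma~\ref{subsets} applied to $f^{mn}$ together with $M(R,(f^m)^n)\leq M^n(R,f^m)$. The only difference is that you spell out two steps the paper leaves implicit -- the bridging of intermediate indices $k=mn+j$ (which the paper compresses into ``this implies $|f^n(z)|\geq M^n(R/2,f)$ for all $n$'') and the passage from the level inclusions to $A(f)=A(f^m)$ -- and both of your elaborations are accurate.
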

\begin{proof}
Let $R > 0$ be such that $M(r,f) > r$ for $r\geq R$, and let $m \in \N$. If $z \in A_R(f)$, then
\[
|f^n(z)| \geq M^n(R,f), \; \mbox{ for } n \in \N,
\]
 so
\[
 |f^{nm}(z)| \geq M^{nm}(R,f) \geq M^n(R,f^m), \; \mbox{ for } n \in \N,
\]
and hence $z \in A_R(f^m)$. Thus $A_R(f) \subset A_R(f^m)$ and hence $A(f) \subset A(f^m)$.

Conversely, suppose that $R>0$ is sufficiently large. Then, by Lemma~\ref{subsets},
\begin{equation}\label{r0}
  M(R,f^n) \geq M^n(R/2,f), \mbox{ for } n \in \N.
\end{equation}
If $z \in A_R(f^m)$, then
\[
|f^{nm}(z)| \geq M^n(R,f^m), \; \mbox{ for } n \in \N,
\]
so, by~\eqref{r0},
\[
 |f^{nm}(z)| \geq M(R,f^{nm}) \geq M^{nm}(R/2,f), \; \mbox{ for } n \in \N.
\]
This implies that $|f^n(z)| \geq M^n(R/2,f)$ for all $n \in \N$ and hence $z \in A_{R/2}(f)$.
Thus $A_R(f^m) \subset A_{R/2}(f)$ and hence $A(f^m) \subset A(f)$.
\end{proof}

We conclude this section by giving a condition that is sufficient to ensure that certain points are in $A(f)$. In forthcoming work~\cite{RSreg} we give a weaker condition that is sufficient to ensure that points are in $A(f)$ provided that $f$ has sufficiently regular growth.

\begin{theorem}\label{Q(f)}
Let $f$ be a {\tef} and, for $\eps \in (0,1)$, $r>0$, let
 $\mu(r) = \eps M(r)$.
 Then
\[
A(f) = \{z: \mbox{there exists } L \in \N \text{ such that }
            |f^{n+L}(z)| \geq \mu^n(R), \text{ for } n \in \N\},
\]
where $R>0$ is sufficiently large to ensure that $\mu(r) > r$, for $r \geq R$.
\end{theorem}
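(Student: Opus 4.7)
The plan is to denote the right-hand side by
\[
S(R)=\bigcup_{L\in\N}\{z:\,|f^{n+L}(z)|\geq\mu^{n}(R)\text{ for all }n\in\N\},
\]
and prove $A(f)=S(R)$ by two inclusions. The inclusion $A(f)\subseteq S(R)$ is immediate: since $\mu(r)=\eps M(r)\leq M(r)$ and $M$ is increasing, a routine induction gives $\mu^{n}(R)\leq M^{n}(R)$, so any $z\in A_{R}^{-L}(f)$ satisfies $|f^{n+L}(z)|\geq M^{n}(R)\geq\mu^{n}(R)$ for $n\in\N$, placing $z$ in $S(R)$.

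For the reverse inclusion the main step is the comparison estimate
\[
\mu^{n}(R)\geq M^{n-1}(R)\quad\text{for all }n\geq 1,
\]
valid when $R$ is sufficiently large. Granted this, a point $z\in S(R)$ with witness $L$ satisfies $|f^{n+L}(z)|\geq\mu^{n}(R)\geq M^{n-1}(R)$ for $n\geq 1$, i.e.\ $|f^{m}(z)|\geq M^{m-(L+1)}(R)$ for $m\geq L+1$, whence $z\in A_{R}^{-(L+1)}(f)\subset A(f)$.

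The comparison estimate is where the real work lies. A naive induction on $\mu^{n}(R)\geq M^{n-1}(R)$ fails, because the step
\[
\mu^{n+1}(R)=\eps M(\mu^{n}(R))\geq\eps M(M^{n-1}(R))=\eps M^{n}(R)
\]
loses the factor $\eps<1$. The remedy is to induct on the stronger statement $\mu^{n}(R)\geq CM^{n-1}(R)$ for a fixed $C>1$. Using~\eqref{M2} I first choose $R$ so large that $\eps M(R)\geq CR$ (the base case), and then, appealing to~\eqref{con}, enlarge $R$ further so that $M(Cs)/M(s)\geq C/\eps$ for every $s\geq R$; the inductive step then reads
\[
\mu^{n+1}(R)=\eps M(\mu^{n}(R))\geq\eps M(CM^{n-1}(R))\geq\eps\cdot\tfrac{C}{\eps}M^{n}(R)=CM^{n}(R).
\]

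The theorem is phrased for every admissible $R$ with $\mu(r)>r$ on $[R,\infty)$, whereas the comparison requires an enlarged $R_{0}$. To close this gap I would verify, in the spirit of Theorem~\ref{basic}(b), that $S(R)$ is independent of the admissible choice of $R$: for $R_{1}<R_{2}$ both admissible, $S(R_{2})\subseteq S(R_{1})$ is trivial since $\mu$ is increasing, and the converse follows by choosing $m$ with $\mu^{m}(R_{1})\geq R_{2}$ and replacing the witness $L_{1}$ by $L_{1}+m$. Applying the comparison at $R_{0}$ then yields $A(f)=S(R_{0})=S(R)$ for every admissible $R$. The main obstacle throughout is the comparison $\mu^{n}(R)\geq M^{n-1}(R)$: the straightforward induction loses a factor of $\eps$ at each iterate, and one must exploit the super-polynomial growth of $M$ via Lemma~\ref{convex} to absorb this loss by strengthening the induction hypothesis with the auxiliary constant $C>1$.
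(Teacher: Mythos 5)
Your proof is correct and follows essentially the same route as the paper: the easy inclusion is immediate from $\mu\le M$, and the hard inclusion rests on comparing iterates of $\mu$ with iterates of $M$ by strengthening the induction hypothesis with a multiplicative cushion that is restored at each step via~\eqref{con}. The paper keeps the cushion in the argument of $M$ (proving $\mu^n(r)\ge\tfrac{1}{\eps}M^n(\eps r)$ for $r\ge R'$ and then absorbing the discrepancy into the level $L$ by choosing $M$ with $\mu^M(R)\ge R'/\eps$), whereas you keep it in the value (proving $\mu^n(R)\ge CM^{n-1}(R)$ after enlarging $R$, then recovering arbitrary admissible $R$ by a separate independence argument); these are cosmetic variants of the same idea.
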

\begin{proof}
By hypothesis,
\begin{equation}\label{mu}
\mu^n(r) \to \infty \mbox{ as } n \to \infty, \mbox{ for } r \geq R.
\end{equation}
(The existence of such an $R$ follows from~\eqref{M2}.)

Further, by~\eqref{con}, there exists $R' \geq R$ such that, for $r \geq R'$,
\begin{equation}\label{mun}
 \mu(r) = \eps M(r) \geq \frac{1}{\eps} M(\eps r) \geq r \mbox{ and hence }
 \mu^n(r) \geq M^n(\eps r), \mbox{ for } n \in \N.
\end{equation}
It follows from~\eqref{mu} that there exists $M \in \N$ such that $\mu^M(R) \geq R'/\eps$. So, by~\eqref{mun},
\[
 \mu^{n+M}(R) \geq \mu^n(R'/\eps) \geq M^n(R'), \mbox{ for } n \in \N.
\]
So, if there exists $L \in \N$ such that $|f^{n+L}(z)| \geq \mu^n(R)$, for $n \in \N$, then
\[
 |f^{n+M+L}(z)| \geq \mu^{n+M}(R)\geq M^n(R') \geq M^n(R), \mbox{ for } n \in \N,
\]
and hence $z \in A(f)$.
\end{proof}

This result can be applied, for example, to the function $f(z) = \sinh z + z^2$ to show that, for $R$ sufficiently large, $(-\infty, -R] \subset A(f)$, since $f((-\infty, -R]) \subset (-\infty, -R]$ and
\[
|f(-r)| \geq \frac{1}{2} f(r) = \frac{1}{2} M(r), \mbox{ for } r > R.
\]

\section{The components of $A(f)$}
\setcounter{equation}{0}

 In this section we show that each component of each of the levels of $A(f)$ is unbounded and hence all the components of $A(f)$ are unbounded. This is sufficient to prove Theorem~\ref{main1} since each of the levels is a closed set and hence each component of each of the levels must also be closed. We give two proofs of Theorem~\ref{main1}. The first is an outline of the constructive proof that all the components of $A(f)$ are unbounded that we gave in~\cite{RS05} and the second is a new proof by contradiction.

\begin{proof}[First proof of Theorem~\ref{main1}]
Let $z_0 \in A_R^L(f)$, for some $L \in \Z$. Then, for all $n \in \N$ with $n+L \geq 0$,
\[
 f^n(z_0) \in \{z: |z| \geq M^{n+L}(R)\} = E_n,
\]
say. Now let $L_n$ denote the component of $f^{-n}(E_n)$ that contains $z_0$. Then $L_n$ is closed and also unbounded, since $f^n$ is entire. Moreover,
\[
L_{n+1} \subset L_n, \; \mbox{ for } n \in \N, n+L \geq 0.
\]
Indeed $f^{n+1}(z) \in E_{n+1}$ implies that $f^n(z) \in E_n$ so $L_{n+1} \subset f^{-n}(E_n)$. Hence
\[
 K = \bigcap_{n \in \N, \; n+L \geq 0} (L_n \cup \{\infty\})
\]
is a closed connected subset of $\hat{\C}$ which contains $z_0$ and $\infty$. Now let $\Gamma$ be the component of $K \setminus \{\infty\}$ which contains $z_0$.  Then $\Gamma$ is closed in $\C$ and unbounded; see~\cite[page 84]{New}.
Finally, we note that $\Gamma \subset A_R^L(f)$ since, if $z \in \Gamma$, then  $f^n(z) \in E_n$ and so
\[
 |f^n(z)| \geq M^{n+L}(R), \; \mbox{ for } n \in \N, \; n+L \geq 0.
 \qedhere
\]
\end{proof}

\begin{proof}[Second proof of Theorem~\ref{main1}]
Suppose that there exists a bounded component $K$ of $A_R^L(f)$, for some $L \in \Z$. Then, since $A_R^L(f)$ is closed, there exists a Jordan curve $\gamma \subset A_R^L(f)^c$ that surrounds $K$; see~\cite[page 143]{New}. For each $n \in \N$ with $n + L \geq 0$, we put
\[
\gamma_n = \{z \in \gamma: |f^n(z)| \geq M^{n+L}(R)\}.
\]
Since $K \subset A_R^L(f)$, it follows from the maximum principle that $\gamma_n \neq \emptyset$. Further, for $n \in \N$ with $n + L \geq 0$, $\gamma_n$ is closed and $\gamma_{n+1} \subset \gamma_n$. Thus
\[
\bigcap_{n \in \N, \;n+L \geq 0} \gamma_n \neq \emptyset.
\]
This, however, is a contradiction since
\[
\bigcap_{n \in \N,\; n+L \geq 0} \gamma_n \subset A_R^L(f) \cap \gamma \; \mbox{ and } \;
\gamma \subset A_R^L(f)^c.
\qedhere
\]
\end{proof}

\section{Fatou components and $A(f)$}
\setcounter{equation}{0}

 In this section we consider which types of Fatou components can meet $A(f)$ and prove various results about such components. In particular, we prove Theorem~\ref{main2}. We begin this section with a distortion lemma that we use to prove various properties of $A(f)$. The original proof of this lemma is due to Baker~\cite[Lemmas 1 and~2]{iB88} and a good account of the result is also given in~\cite[Lemma 7]{wB93}.

\begin{lemma}\label{dist}
Let $f$ be a {\tef}, let $U \subset I(f)$ be a Fatou component and let $K$ be a compact subset of $U$.
\begin{itemize}
\item[(a)] There exist $C>1$, $N \in \N$ such that
\[
|f^n(z_0)| \leq |f^n(z_1)|^C, \; \mbox{ for } z_0, z_1 \in K, \; n \geq N.
\]
\item[(b)] If, in addition, $U$ is simply connected, then there exist $C>1$, $N \in \N$ such that
\[
|f^n(z_0)| \leq C|f^n(z_1)|, \; \mbox{ for } z_0, z_1 \in K, \; n \geq N.
\]
\end{itemize}
\end{lemma}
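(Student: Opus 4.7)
The natural approach is to argue via the hyperbolic metric on Fatou components, converting the Schwarz--Pick contraction into a geometric estimate and then translating bounded hyperbolic distance at points of large modulus into the stated bounds on $|f^n(z_0)|$ versus $|f^n(z_1)|$. Let $D$ denote the (finite) hyperbolic diameter of $K$ in $U$, and let $U_n$ be the Fatou component of $F(f)$ containing $f^n(U)$. Each $U_n$ is a hyperbolic plane domain, since $J(f)$ is infinite and lies in its complement, and $f^n\colon U\to U_n$ is holomorphic, so the Schwarz--Pick lemma yields
\[
d^{\rm hyp}_{U_n}\bigl(f^n(z_0),f^n(z_1)\bigr) \leq d^{\rm hyp}_U(z_0,z_1) \leq D \qquad (z_0,z_1 \in K).
\]
Because $U\subset I(f)$ and $(f^n)$ is normal on $U$, we additionally have $|f^n(z)|\to\infty$ uniformly on $K$.

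For part (a), fix any three distinct points $a,b,c\in J(f)$ and put $V:=\C\setminus\{a,b,c\}$. Since $U_n\cap J(f)=\emptyset$ we have $U_n\subset V$, and monotonicity of the hyperbolic metric under inclusion gives $d_V \leq d_{U_n}$ on $U_n$. A standard Ahlfors-type bound on a thrice-punctured plane yields
\[
\lambda_V(z) \geq \frac{c_1}{|z|\,(\log|z|+c_2)} \qquad \text{for } |z| \text{ large}.
\]
Using $|dz|\geq |d|z||$ and integrating along any curve joining $f^n(z_0)$ to $f^n(z_1)$ (all of which stays in a region of large modulus once $n$ is large, since these points do and the hyperbolic distance is bounded) one obtains
\[
d_V\bigl(f^n(z_0),f^n(z_1)\bigr) \geq c_1\,\bigl|\log(\log|f^n(z_0)|+c_2) - \log(\log|f^n(z_1)|+c_2)\bigr|.
\]
Combining with the Schwarz--Pick bound and exponentiating twice produces $|f^n(z_0)|\leq |f^n(z_1)|^C$ with $C:=e^{D/c_1}$ (suitably enlarged to absorb $c_2$), proving part (a).

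For part (b), the hypothesis that $U$ is simply connected propagates to $U_n$: since every multiply connected Fatou component of a {\tef} is a bounded wandering domain by Baker's theorem, a short Riemann--Hurwitz argument shows that a simply connected Fatou component cannot map into a multiply connected one, so each $U_n$ is simply connected. For a simply connected proper subdomain $W\subsetneq\C$, Koebe's one-quarter theorem yields $\lambda_W(z)\geq 1/(4\,\mathrm{dist}(z,\partial W))$. Choose a point $p\in J(f)$ of minimal modulus; then $p\in\C\setminus U_n$ for every $n$, so for $z\in U_n$ with $|z|\geq |p|$ we have $\mathrm{dist}(z,\partial U_n)\leq |z-p|\leq 2|z|$, and hence $\lambda_{U_n}(z)\geq 1/(8|z|)$. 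Integrating along any curve gives
\[
d_{U_n}(w_0,w_1) \geq \tfrac{1}{8}\,\bigl|\log|w_0/w_1|\bigr|,
\]
and combining with the Schwarz--Pick bound delivers the linear estimate $|f^n(z_0)|\leq C|f^n(z_1)|$ with $C:=e^{8D}$.

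The main obstacle is part (b): one must justify both that the $U_n$ remain simply connected under iteration (relying on Baker's classification of multiply connected Fatou components as bounded wandering domains) and that $\mathrm{dist}(f^n(z_i),\partial U_n)$ is bounded linearly by $|f^n(z_i)|$ uniformly in $n$; the latter is secured by using a fixed point of $J(f)$ of smallest modulus, which automatically lies outside every Fatou component. Part (a), by contrast, requires only the trivial fact that $J(f)$ has at least three points and is a clean application of the hyperbolic-metric machinery.
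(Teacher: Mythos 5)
The paper does not prove this lemma itself --- it attributes it to Baker \cite[Lemmas 1 and 2]{iB88} (see also \cite[Lemma 7]{wB93}) --- and your argument is essentially a reconstruction of that proof: Schwarz--Pick plus the density estimate $\lambda(z)\asymp 1/(|z|\log|z|)$ on the plane minus two Julia points for part (a), and Schwarz--Pick plus the Koebe estimate $\lambda_W(z)\geq c/\mathrm{dist}(z,\partial W)$ on the simply connected image component for part (b). Part (a) is correct; the only loose point is the parenthetical claim that \emph{any} curve joining $f^n(z_0)$ to $f^n(z_1)$ stays in the large-modulus region --- what is true is that any curve of hyperbolic length at most $D$ must stay there for large $n$, since a curve descending from modulus $|f^n(z_0)|\to\infty$ to a fixed modulus has hyperbolic length tending to infinity; this is the standard fix and you clearly have it in mind.

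The genuine gap is in part (b), in the claim that simple connectivity of $U$ propagates to the components $U_n$ containing $f^n(U)$. Your justification is ``a short Riemann--Hurwitz argument shows that a simply connected Fatou component cannot map into a multiply connected one,'' but Riemann--Hurwitz (or the degree--connectivity formula) applies only to \emph{proper} maps, and $f:U\to U_1$ is not proper in general: properness follows from $f(\partial U)\subset J(f)\cap\overline{U_1}=\partial U_1$ only when $U$ is bounded, and nothing in your setup rules out $U$ unbounded (indeed the image of a simply connected domain under an entire map can be multiply connected, e.g.\ $e^z$ on a wide strip, so some global input is needed). The missing ingredient is that if some $U_n$ ($n\geq 1$) were multiply connected, then by Baker's theorem (Lemma 4.3 of the paper) the images $f^k(U_n)$ are bounded and eventually each surrounds its predecessor, so the plane is exhausted by disjoint separating rings lying in distinct wandering components; consequently \emph{every} Fatou component of $f$ is bounded. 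Only then are all the maps $f:U_{k}\to U_{k+1}$ proper, and Riemann--Hurwitz ($\chi(U_k)=d\,\chi(U_{k+1})-r$ with $r\geq 0$) forces $U_{k+1}$ to be simply connected whenever $U_k$ is, giving the contradiction. With that paragraph inserted, the rest of your part (b) (the choice of $p\in J(f)$, the bound $\mathrm{dist}(z,\partial U_n)\leq|z-p|\leq 2|z|$, and the integration) is sound, subject to the same large-modulus caveat as in part (a).
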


The following corollary of Lemma~\ref{dist} was proved in~\cite{BH99}.

\begin{corollary}
Let $f$ be a {\tef} and let $U$ be a Fatou component with $U \cap A(f) \neq \emptyset$. Then $U$ is a wandering domain.
\end{corollary}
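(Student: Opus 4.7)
My plan is to argue by contradiction. Suppose that $U$ is not a wandering domain, so some forward image $V = f^k(U)$ is a periodic Fatou component of period $p \in \N$. Since $A(f)$ is completely invariant (Theorem~\ref{basic}(a)), the iterate $f^k(z_0)$ of any $z_0 \in U \cap A(f)$ lies in $V \cap A(f)$; and since $A(f) = A(f^p)$ by Theorem~\ref{eq}, after relabelling I may assume outright that $U$ itself is invariant under the auxiliary map $g = f^p$, with some $z_0 \in U \cap A(g)$.

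Next I would invoke the classification of periodic Fatou components for transcendental entire functions. Any $z \in A(g)$ escapes to infinity since $|g^{n+L}(z)| \geq M^n(R,g) \to \infty$; so $U$ cannot be an attracting basin, a parabolic basin, or a Siegel disc, and must therefore be a Baker domain of $g$. In particular $U \subset I(g)$, which is precisely the hypothesis needed to apply Lemma~\ref{dist}(a) to the map $g$ on $U$.

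The heart of the proof is to apply Lemma~\ref{dist}(a) to a compact set $K \subset U$ containing both $z_0$ and $g(z_0)$ (such a $K$ exists because $U$ is open, connected and $g$-invariant, so $g(z_0) \in U$). This produces constants $C > 1$ and $N \in \N$ with
\[
|g^{n+1}(z_0)| \;=\; |g^n(g(z_0))| \;\leq\; |g^n(z_0)|^{C}, \quad \text{for } n \geq N.
\]
Iterating this inequality yields $|g^n(z_0)| \leq |g^N(z_0)|^{C^{n-N}}$ for $n \geq N$, and hence
\[
\log\log|g^n(z_0)| \;\leq\; (n-N)\log C + \log\log|g^N(z_0)|.
\]
Thus $\tfrac{1}{n}\log\log|g^n(z_0)|$ remains bounded as $n \to \infty$. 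On the other hand, $z_0 \in A(g) \subset Z(g)$ by Theorem~\ref{basic}(c), so $\tfrac{1}{n}\log\log|g^n(z_0)| \to \infty$, the desired contradiction.

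The main obstacle I anticipate is that the definition of $A(f)$ is framed with respect to $f$, whereas the natural place to apply the distortion estimate is on a $g$-invariant component with $g = f^p$. Theorem~\ref{eq} is what makes this reduction legitimate; without it, the iterates $f^n(z_0)$ would cycle through distinct components of the periodic cycle and one could not directly exploit the fact that $z_0$ and its next iterate under the ``right'' map lie in a common compact subset of a single component. Once this reduction is in hand, the distortion bound applied to the pair $(z_0, g(z_0))$ converts ``comparable escape rates on a compact set'' into the crucial bound $\log\log|g^n(z_0)| = O(n)$, which is precisely what clashes with membership in $Z(g)$.
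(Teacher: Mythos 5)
Your proof is correct and takes essentially the same approach as the paper's: reduce to a periodic component, apply Lemma~\ref{dist}(a) to $f^p$ with the two-point compact set $\{z_0, f^p(z_0)\}$, deduce $\log\log|f^{np}(z_0)| = O(n)$, and contradict Theorem~\ref{basic}(c). The only cosmetic difference is that you pass to $Z(f^p)$ via $A(f)=A(f^p)$, whereas the paper contradicts $z_0 \in Z(f)$ directly along the subsequence $n = mp$, which lets it skip the appeal to Theorem~\ref{eq}; the classification of periodic components is likewise unnecessary, since $U \subset I(f)$ already follows from normality.
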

\begin{proof}
 Let $U \subset I(f)$ be a periodic Fatou component of period $p$, say, and let $z_0 \in U$. It follows from Lemma~\ref{dist} part (a) applied to the entire function $f^p$ with $K = \{z_0, f^p(z_0)\}$ that there exist $C>1$, $N \in \N$ such that, for each $n \geq N$, $\log |f^{np}(z_0)| \leq C^n$ and hence
$z_0 \notin Z(f)$. Thus, by Theorem~\ref{basic} part (c), $z_0 \notin A(f)$.
\end{proof}

There are various types of wandering domains that are known to lie in $A(f)$. In particular, the following result due to Baker~\cite[Theorem 3.1]{iB84} can be used to show that $U$ belongs to $A(f)$ if $U$ is multiply connected.

\begin{lemma}\label{Baker}
Let $f$ be a {\tef} and let $U$ be a multiply connected Fatou component. Then
\begin{itemize}
\item each $f^n(U)$ is bounded,
\item $f^{n+1}(U)$ surrounds $f^n(U)$ for large $n$,
\item $f^n(U) \to \infty$ as $n\to\infty$.
 \end{itemize}
\end{lemma}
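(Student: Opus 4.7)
The plan is to build everything around a separating Jordan curve in $U$ and exploit the maximum principle together with normality of $(f^n)$ on $U$. Since $U$ is multiply connected, $\C\setminus U$ has a bounded component $K$, and, translating if necessary, we may assume $0\in K$. Because $\partial K\subset\partial U\subset J(f)$, fix $\zeta_0\in K\cap J(f)$. Choose a Jordan curve $\gamma\subset U$ surrounding $K$, and let $G$ be the Jordan domain bounded by $\gamma$.

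The key estimate is a maximum-modulus bound. Since $\gamma\subset F(f)$ and $f^n$ is entire, $f^n$ is holomorphic on $\overline G$, so
\[
|f^n(z)|\leq M_n:=\max_{\zeta\in\gamma}|f^n(\zeta)|,\qquad z\in\overline G,
\]
and hence $f^n(\overline G)$ is bounded. The argument principle applied to $f^n-f^n(\zeta_0)$ on $G$ shows that $f^n(\gamma)$ has positive winding number around $f^n(\zeta_0)\in J(f)$, so the Fatou component $U_n:=f^n(U)$ contains a loop enclosing a Julia point and is itself multiply connected.

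For the third bullet, I would rule out the non-escape alternative via Montel. Since $(f^n)$ is normal on $U$, either $f^n\to\infty$ uniformly on compact subsets of $U$, or some subsequence $f^{n_k}$ converges locally uniformly to a holomorphic $g\colon U\to\C$. In the second case, the bounded orbit points $f^{n_k}(\zeta_0)$, lying interior to the bounded loops $f^{n_k}(\gamma)$, have a subsequential limit inside $g(\gamma)$; a Hurwitz argument then forces $g$ to attain a value of $J(f)$ on $G$, which is incompatible with $g$ being a local limit of iterates taking values in $F(f)$. Hence $f^n\to\infty$ uniformly on compact subsets of $U$, and in particular $f^n(U)\to\infty$.

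The boundedness of $f^n(U)$ is the main obstacle: the maximum-modulus estimate controls only the image of $\overline G$, whereas $U$ (and hence $U_n$) could a priori extend outside $\gamma$. I would close this gap using Wiman--Valiron theory in the spirit of Theorem~\ref{E}: if $U$ were unbounded, one could exhibit a second loop $\gamma'\subset U$ homotopic to $\gamma$ in $U$ of arbitrarily large modulus, still surrounding $K$, and re-run the maximum-principle argument to contradict either the escape on $\gamma$ or the hole-around-$f^n(\zeta_0)$ structure forced by the winding argument. Once boundedness is secured and the loops $f^n(\gamma)$ escape to infinity, the surrounding property in the second bullet follows quickly: with $r_n:=\min_\gamma|f^n|\to\infty$, Lemma~\ref{convex} gives $r_{n+1}\geq M(r_n)\gg M_n$ for large $n$, so a winding-number computation shows $f^{n+1}(\gamma)$ surrounds $f^n(\gamma)$, and hence $f^{n+1}(U)\supset f^{n+1}(\gamma)$ surrounds $f^n(U)$.
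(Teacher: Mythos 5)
The paper does not prove this lemma: it is quoted directly from Baker \cite[Theorem 3.1]{iB84}, so there is no internal proof to compare with. Judged on its own terms, your opening moves (a separating Jordan curve $\gamma\subset U$ around a bounded complementary component $K$, the maximum principle on the Jordan domain $G$, and the argument-principle observation that $f^n(\gamma)$ winds around $f^n(\zeta_0)\in J(f)$) are exactly the standard first steps. But each of the three bullets is left with a genuine gap. For the third bullet, the contradiction is asserted rather than derived: it is simply not true that a locally uniform limit of iterates on a Fatou component cannot take values in $J(f)$ (a constant limit equal to a parabolic periodic point does exactly that), and $g$ is in any case only defined on $U$, not on $G\supset K$, so ``$g$ attains a value of $J(f)$ on $G$'' does not parse. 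The missing engine is the blow-up property of the Julia set: since $\zeta_0\in J(f)\cap G$, Montel's theorem forces $f^n(G)$ eventually to cover every compact set avoiding the exceptional point, which contradicts the maximum-principle bound $f^{n_k}(\overline G)\subset\{|z|\le M_{n_k}\}$ with $M_{n_k}$ bounded along the subsequence where $f^{n_k}\to g$. Without invoking this (or equivalent machinery on limit functions), the dichotomy argument does not close.

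The first bullet is the heart of Baker's theorem and your proposed repair does not work: an unbounded multiply connected domain need not contain loops surrounding $K$ of arbitrarily large modulus. For instance, in $U=\C\setminus\bigl(\overline{D(0,1)}\cup[2,\infty)\bigr)$ every curve surrounding the bounded complementary component must cross the positive real axis in $(1,2)$, so no such family of large loops exists, and Wiman--Valiron gives you no reason to find one inside a Fatou component. Baker's actual argument for boundedness (and for the surrounding property) rests on the hyperbolic-metric distortion estimate recorded in this paper as Lemma~\ref{dist}(a), namely $\max_\gamma|f^n|\le\bigl(\min_\gamma|f^n|\bigr)^C$, combined with the covering statement $f^n(G)\supset\{|z|<\min_\gamma|f^n|\}$ obtained from blow-up plus the maximum principle. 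Finally, in the second bullet the inequality $r_{n+1}\ge M(r_n)$ is unjustified and false in general: $\min_{w\in f^n(\gamma)}|f(w)|$ can be as small as the minimum modulus $m(r_n)$, which may be tiny, and Lemma~\ref{convex} is a convexity statement about $\log M$ that says nothing about minima of $|f|$ on image curves. Establishing $\min_\gamma|f^{n+1}|>\max_\gamma|f^n|$ for large $n$ is precisely where the distortion lemma is needed again, so as written the surrounding property also remains unproved.
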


{\it Remark.} Note that if $U$ is a bounded Fatou component of a {\tef} $f$ then, for $n \in \N$, $f^n(U)$ is also a Fatou component of $f$; see~\cite{mH98}.

We showed in~\cite{RS05} that, for any multiply connected Fatou component $U$ we have $\overline{U} \subset A(f)$. The following more precise result holds.

\begin{theorem}\label{Baker1}
Let $f$ be a transcendental entire function and let $U$ be a multiply connected Fatou component. Then, for some $N \in \N$ and $R>0$, we have $f^N(\overline{U}) \subset A_R(f)$ and hence $\overline{U} \subset A(f)$.
\end{theorem}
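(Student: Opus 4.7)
The plan is to deduce this refined statement from the prior fact that $\overline{U} \subset A(f)$ (from~\cite{RS05}, recalled just above the theorem) together with Theorem~\ref{main2}(a) and the forward-invariance relation~\eqref{subset}. The key idea is that the decomposition $A(f) = \bigcup_{L \in \N} A_R^{-L}(f)$ in~\eqref{A(f)def} converts a qualitative statement (``$z \in A(f)$'') into a quantitative one (``$z \in A_R^{-L}(f)$ for a specific $L$''), after which Theorem~\ref{main2}(a) and~\eqref{subset} do the rest.

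I would proceed as follows. First fix any $R > 0$ with $M(r,f) > r$ for $r \geq R$; by Theorem~\ref{basic}(b) the set $A(f)$ does not depend on this choice. Pick an arbitrary point $z_0 \in U$. The prior inclusion $\overline{U} \subset A(f)$ together with~\eqref{A(f)def} produces some $L_0 \in \N$ with $z_0 \in A_R^{-L_0}(f)$, so in particular the Fatou component $U$ meets $A_R^{-L_0}(f)$. Applying Theorem~\ref{main2}(a) with level $L = -L_0$ gives $\overline{U} \subset A_R^{-L_0-1}(f)$. Iterating the forward-invariance property $f(A_R^L(f)) \subset A_R^{L+1}(f)$ from~\eqref{subset} exactly $L_0+1$ times then yields
\[
f^{L_0+1}(\overline{U}) \;\subset\; A_R^{0}(f) \;=\; A_R(f).
\]
Setting $N = L_0+1$ establishes the main conclusion, and the addendum $\overline{U} \subset A(f)$ follows immediately since $A_R(f) \subset A(f)$ and $A(f)$ is completely invariant under $f$ (Theorem~\ref{basic}(a)), giving $\overline{U} \subset f^{-N}(A(f)) = A(f)$.

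There is no genuine obstacle once Theorem~\ref{main2}(a) is in hand; the real work was already packaged there using the distortion Lemma~\ref{dist}. If one wished instead to avoid citing~\cite{RS05} and prove $\overline{U} \subset A(f)$ from scratch, the difficulty would shift to showing that $U$ meets $A(f)$ at all. The natural route would be to find, for sufficiently large $n$, a round annulus $\{z : r \leq |z| \leq r(1+\eps)\}$ contained in $U_n = f^n(U)$ --- using the fact from Lemma~\ref{Baker} that $U_{n+1}$ surrounds the bounded set $U_n$ and that $U_n \to \infty$ --- then apply Theorem~\ref{E} to this annulus to obtain an Eremenko point in $U_n$, and finally invoke the complete invariance of $A(f)$ to pull back to a point of $U \cap A(f)$. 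After that, the argument is identical to the one above.
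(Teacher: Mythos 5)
Your argument is correct, but it takes a genuinely different route from the paper's. The paper proves the theorem directly and self-containedly: it combines Lemma~\ref{Baker} (for large $n$, $f^{n+1}(U)$ surrounds $f^n(U)$ and $\widetilde{f^N(U)}$ contains a disc $D=\{z:|z|<R\}$) with Lemma~\ref{subsets} (which shows $\widetilde{f^n(D)}\supset\{z:|z|\leq M^n(R/2)\}$ via Eremenko points), concluding that $\overline{f^{n+N+1}(U)}$ is disjoint from $\{z:|z|<M^n(R/2)\}$ for all $n$, i.e.\ $\overline{f^{N+1}(U)}\subset A_{R/2}(f)$. You instead take the qualitative inclusion $\overline{U}\subset A(f)$ from~\cite{RS05} as given (in fact you only need $U\cap A(f)\neq\emptyset$), use Theorem~\ref{main2}(a) to put the entire closure into a single level $A_R^{-L_0-1}(f)$, and then push forward with~\eqref{subset}. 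This is valid and arguably cleaner as a deduction, and there is no circularity even though Theorem~\ref{main2} appears after Theorem~\ref{Baker1} in the paper, since the proof of Theorem~\ref{main2} uses only Lemma~\ref{dist} and Lemma~\ref{convex}. What the paper's approach buys is independence from the external reference, in keeping with its stated aim of being self-contained; what yours buys is a clear illustration of how the level decomposition plus Theorem~\ref{main2}(a) converts membership of a single point of $U$ in $A(f)$ into the uniform quantitative statement. One small correction to your fallback sketch: Lemma~\ref{Baker} alone does not supply a round annulus $\{z:r\leq|z|\leq r(1+\eps)\}$ inside $f^n(U)$ --- the surrounding property constrains the topology of the complement, not the shape of $f^n(U)$ itself. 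You need Zheng's result (Lemma~\ref{Zheng}), which gives annuli $\{z:r_n<|z|<R_n\}\subset f^n(U)$ with $R_n/r_n\to\infty$; this is exactly the alternative proof the paper sketches after Lemma~\ref{Zheng}.
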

\begin{proof}
 Suppose that $D = \{z: |z| < R\}$, where $R$ is sufficiently large to apply Lemma~\ref{subsets},  and that $U$ is a multiply connected Fatou component. Then, by Lemma~\ref{Baker}, there exists $N \in \N$ such that $\widetilde{f^N(U)} \supset D$ and
 \begin{equation}\label{sur}
 f^{n+1}(U) \mbox{ surrounds } f^n(U), \mbox{ for } n \geq N.
 \end{equation}
 Then
\begin{equation}\label{D}
f^n(D) \subset f^n(\widetilde{f^N(U)}) \subset \widetilde{f^{n+N}(U)}, \; \mbox{ for } n \in \N,
\end{equation}
so, by~\eqref{sur} and~\eqref{D},
\[
   \overline{f^{n+N+1}(U)} \cap \widetilde{f^n(D)} = \emptyset, \mbox{ for } n \in \N.
 \]
 Hence, by Lemma~\ref{subsets},
 \[
  \overline{f^{n+N+1}(U)} \cap \{z: |z| < M^n(R/2)\} = \emptyset, \mbox{ for } n \in \N,
 \]
 so $\overline{f^{N+1}(U)} \subset A_{R/2}(f)$ as required.
\end{proof}

Alternatively, Theorem~\ref{Baker1} can be proved by using the following result due to Zheng~\cite{jhZ06}.
\begin{lemma}\label{Zheng}
Let $f$ be a {\tef}. If $f$ has a multiply connected Fatou component $U$, then there exist annuli $A_n =\{z:r_n <
|z|<R_n\}$, $n\in\N$, and $n_0\in\N$ such that
\[A_n\subset f^n(U), \quad\text{for } n > n_0,\]
and $R_n/r_n \to\infty$ as
$n\to\infty$.
\end{lemma}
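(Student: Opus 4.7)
The plan is to combine Baker's surrounding property from Lemma~\ref{Baker} with Wiman--Valiron theory applied at a near-maximum-modulus point inside the iterate $V_n := f^n(U)$. First, by Lemma~\ref{Baker}, for all $n \geq n_0$ the set $V_n$ is bounded and multiply connected, $V_{n+1}$ surrounds $V_n$, and $V_n \to \infty$; writing $\widetilde{V_n}$ for $V_n$ together with its bounded complementary components, the sets $\widetilde{V_n}$ form an increasing family that exhausts $\C$. In particular $V_n$ contains points of arbitrarily large modulus, and its outer boundary escapes to infinity.

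The engine of the proof is Wiman--Valiron theory: outside an exceptional set of $r$-values of finite logarithmic measure, if $|\zeta_r| = r$ and $|f(\zeta_r)| = M(r, f)$, then on the disc $D(\zeta_r, \rho(r))$ with $\rho(r) = r/(\log r)^{1+\delta}$ we have the approximation $f(z) \sim (z/\zeta_r)^{N(r)} f(\zeta_r)$, where the central index $N(r) \to \infty$. I would select, for each large $n$, a point $\zeta_n \in V_n$ with $r_n := |\zeta_n|$ avoiding the Wiman--Valiron exceptional set, $|f(\zeta_n)|$ close to $M(r_n,f)$, and such that the Wiman--Valiron disc $\Delta_n := D(\zeta_n, \rho(r_n))$ is contained in $V_n$. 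Granting $\Delta_n \subset V_n$, the approximation yields $|f(z)|/|f(\zeta_n)| \in [(1-\rho/r_n)^{N(r_n)}, (1+\rho/r_n)^{N(r_n)}]$ on $\partial \Delta_n$, and since $N(r_n)\rho(r_n)/r_n \to \infty$ while $\rho(r_n)/r_n \to 0$, the image $f(\Delta_n)$ covers a round annulus
\[
A_{n+1} = \bigl\{w : k_n^{-1} |f(\zeta_n)| < |w| < k_n |f(\zeta_n)|\bigr\}
\]
centred at the origin with $k_n \to \infty$. Since $\Delta_n \subset V_n$, we have $A_{n+1} \subset f(V_n) = V_{n+1}$, giving the required annuli with $R_{n+1}/r_{n+1} \geq k_n^2 \to \infty$.

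The main obstacle is the geometric containment $\Delta_n \subset V_n$: $V_n$ could a priori be very thin. I would handle this by a bootstrap using the annuli produced along the way. For the initial step, use that $V_{n_0}$ surrounds $\widetilde{V_{n_0 - 1}}$ (which has large diameter) together with Koebe distortion applied to a univalent branch of $f^{-n_0}$ on a suitable subregion of $V_{n_0}$ (conjugating to $U$, where normality of $(f^n)$ gives uniform control); this yields a round disc in $V_{n_0}$ of radius comparable to its centre's modulus. Once $V_n$ contains a round disc of relative radius bounded below, the Wiman--Valiron image gives the annulus $A_{n+1}$, which itself contains round discs of relative radius comparable to $1$, so the induction continues. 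Making this bootstrap quantitative, and checking that suitable near-maximum-modulus points $\zeta_n$ can always be chosen inside $V_n$ while dodging the Wiman--Valiron exceptional set, is the technical crux of the argument.
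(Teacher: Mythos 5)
First, a point of comparison: the paper does not prove this lemma at all --- it is quoted from Zheng~\cite{jhZ06}, whose argument is based on the hyperbolic metric (Schwarz--Pick contraction applied to a non-contractible closed curve in $U$ and to pairs of points of $U$, together with lower bounds for the hyperbolic density of $f^n(U)$), not on Wiman--Valiron theory. Your ``engine'' is nevertheless sound and is essentially the mechanism of Theorem~\ref{E}: the image of a Wiman--Valiron disc about a maximum-modulus point covers a round annulus whose modulus tends to infinity. But note that the paper's logic runs the other way round --- it uses Zheng's lemma to place such discs \emph{inside} $f^n(U)$ --- and reversing that implication is exactly where your argument has gaps.

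There are two. (i) The inductive step is wrong as stated. The Wiman--Valiron asymptotics hold only on a disc centred at a point $\zeta$ with $|f(\zeta)|$ essentially equal to $M(|\zeta|)$. A round disc $D(c,\eps|c|)\subset V_n$ with $\eps<1$ need not contain any such point, so ``$V_n$ contains a round disc of relative radius bounded below'' gives no control on $f$ there, and the image of that disc need not cover any annulus about the origin. What the induction really requires is that $V_n$ contain a full round annulus $\{r<|z|<R\}$ centred at $0$ with $R/r$ bounded below: then some circle $|z|=t$, with $t$ outside the exceptional set, lies entirely in $V_n$, hence contains a maximum-modulus point, and the Wiman--Valiron disc about it fits inside the annulus; the step then does produce a centred annulus in $V_{n+1}$ with ratio tending to infinity. (ii) With the induction hypothesis corrected in this way, the base case becomes precisely the content of the lemma (minus the $R_n/r_n\to\infty$ refinement): you must rule out that $f^{n_0}(U)$ is, say, a thin spiralling neighbourhood of a curve which surrounds $0$ without containing any full circle $\{|z|=t\}$. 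Lemma~\ref{Baker} gives only the surrounding property, and your proposed fix does not work: $f^{n_0}|_U$ is in general a branched covering of large degree, so there is no univalent branch of $f^{-n_0}$ on a suitable subregion, and normality of $(f^n)$ on compact subsets of $U$ controls values, not the shape of $V_{n_0}$. This is exactly the point at which Zheng's hyperbolic-metric argument does its real work (the bounded hyperbolic length of $f^n(\gamma)$ in $f^n(U)$ forces a definite Euclidean thickness of $f^n(U)$ relative to $|z|$), and your proposal has no substitute for it.
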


{\it Remarks.} 1. Zheng proved a version of this result for the larger class of transcendental meromorphic functions with at most finitely many poles, where $U$ is a wandering domain with the properties listed in Lemma~\ref{Baker}.

2. In~\cite{BRS2} (see also~\cite{SO}) we show that Lemma~\ref{Zheng} can be strengthened to give much larger annuli inside multiply connected Fatou components.\\

It follows from Lemma~\ref{Zheng} and Theorem~\ref{E} that, if $U$ is a multiply connected Fatou component of a {\tef} $f$ and $n$ is sufficiently large, then $f^n(U)$ contains an Eremenko point. Together with Theorem~\ref{E} part (a) and Lemma~\ref{Baker}, this is sufficient to prove Theorem~\ref{Baker1}.

There are also examples of simply connected Fatou components in $A(f)$. Bergweiler~\cite{wB08} has constructed an example of a {\tef} with
simply connected bounded wandering domains that lie between multiply connected
wandering domains, and whose closures are in $A(f)$. These appear to be the only
known examples of Fatou components in $A(f)$ that are not
multiply connected. This suggests the following question.\\

{\bf Question 1} Can there be any unbounded Fatou components in
$A(f)$?\\

For the examples of Fatou components that meet $A(f)$ described above, it is the case that the closure of the Fatou component is also in $A(f)$. We now show that this property is true in general by proving Theorem~\ref{main2}.

\begin{proof}[Proof of Theorem~\ref{main2}.]
 Let $U$ be a Fatou component and let
 $z_0 \in U \cap A_R^L(f)$, where $L\in \Z$. Then $U \subset I(f)$, by normality, and
\begin{equation}\label{U1}
|f^n(z_0)| \geq M^{n+L}(R), \; \mbox{ for } n \in \N \mbox{ with } n+L \geq 0.
\end{equation}
 {\it (a)} To prove that $\overline{U} \subset A_R^{L-1}(f)$, we suppose that $z_1 \in U$. It follows from Lemma~\ref{dist} part (a), \eqref{U1} and properties of the maximum modulus function that there exist $C>1$ and $N \in \N$ such that
\[
  |f^{n}(z_1)| \geq |f^{n}(z_0)|^{1/C} \geq (M^{n+L}(R))^{1/C} \geq M^{n+L-1}(R), \; \mbox{ for } n \geq N.
\]
Thus $U \subset A_R^{L-1}(f)$. Since $A_R^{L-1}(f)$ is a closed set, this implies that $\overline{U} \subset A_R^{L-1}(f)$ as claimed.

{\it (b)} Now suppose that $U$ is simply connected. To prove that $\overline{U} \subset A_R^L(f)$, we use proof by contradiction. Suppose that there exists $z_1 \in U$ with $z_1 \notin A_R^L(f)$. Then there exist $N \in \N$ and $c>1$ such that
\[
|f^{N}(z_1)| = (M^{N+L}(R))^{1/c} = R_0,
\]
say. Since $z_1 \in I(f)$, we can assume that $M^n(R_0) \to \infty$ as $n \to \infty$.
Together with \eqref{U1}, this implies that, for $n \in \N$,
\[
 \frac{|f^{n+N}(z_0)|}{|f^{n+N}(z_1)|} \geq \frac{M^{n+N+L}(R)}{M^n((M^{N+L}(R))^{1/c})} =
 \frac{M^n(R_0^c)}{M^n(R_0)}.
\]

This, however, contradicts Lemma~\ref{dist} part (b) since, by
Lemma~\ref{convex},
\[
\frac{M^n(R_0^c)}{M^n(R_0)} \geq \frac{(M^n(R_0))^c}{M^n(R_0)} = (M^n(R_0))^{c-1}
 \to \infty \; \mbox{ as } n \to \infty.
 \]
Thus $U \subset A_R^L(f)$. Since $A_R^L(f)$ is a closed set, this implies that $\overline{U} \subset A_R^L(f)$.
\end{proof}

The next result follows immediately from Theorem~\ref{main2} part (b), and will be used later in the paper.

\begin{corollary}\label{AJF}
Let $f$ be a {\tef}, let $R>0$ be such that $M(r,f) > r$ for $r\geq R$, and let $L \in \Z$. If all the components of $F(f)$ are simply connected, then $\partial A_R^L(f) \subset J(f)$.
\end{corollary}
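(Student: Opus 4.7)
The plan is to argue by contradiction using the fact that $A_R^L(f)$ is closed (so $\partial A_R^L(f)\subset A_R^L(f)$) together with the simple-connectedness hypothesis and Theorem~\ref{main2}(b). Let $z\in \partial A_R^L(f)$ and suppose for contradiction that $z\in F(f)$; let $U$ denote the Fatou component containing $z$.

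Since $A_R^L(f)$ is closed, the boundary point $z$ itself lies in $A_R^L(f)$, so $U\cap A_R^L(f)\ne\emptyset$. By hypothesis $U$ is simply connected, so Theorem~\ref{main2}(b) applies and yields $\overline{U}\subset A_R^L(f)$. In particular $U\subset A_R^L(f)$. But $U$ is open and $z\in U$, so $z$ is an interior point of $A_R^L(f)$, which contradicts $z\in \partial A_R^L(f)$. Hence $z\in J(f)$, and the corollary follows.

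There is no real obstacle here: the whole statement is essentially a direct consequence of Theorem~\ref{main2}(b) combined with the closedness of the level sets. The only tiny point to be careful about is not assuming a priori that $z\in A_R^L(f)$ rather than just in its boundary; this is handled immediately by the closedness of $A_R^L(f)$, after which the simple-connectedness hypothesis delivers $U\subset A_R^L(f)$ and the contradiction with $z\in\partial A_R^L(f)$ is automatic.
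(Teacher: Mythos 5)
Your proof is correct and is exactly the argument the paper intends: the paper simply states that the corollary "follows immediately from Theorem~\ref{main2} part~(b)", and your write-up (closedness of $A_R^L(f)$, then Theorem~\ref{main2}(b) forcing $U\subset A_R^L(f)$, contradicting $z\in\partial A_R^L(f)$) is the standard way to fill that in.
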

{\it Remark.}  It follows from Montel's theorem that, if $f$ is a {\tef} and $L \in \Z$, then all the interior points of $A_R^L(f)$ are in $F(f)$.\\

We conclude this section by showing that the conclusion of Corollary~\ref{AJF} can fail if there is a multiply connected Fatou component. Suppose that $f$ is a {\tef} and that $U$ is a multiply connected component of $F(f)$. Then, by Lemma~\ref{Zheng} and Theorem~\ref{E}, there exist $N \in \N$ and $R>0$ such that
\[
 f^N(U) \supset \{z: \frac{1}{2}R \leq |z| \leq 2R\}
\]
and such that $f^N(U)$ contains an Eremenko point $z_0$ with $|f^n(z_0)| \geq M^n(R)$, for $n \in \N$. Since
\[
z_0 \in A_R(f) \cap f^N(U)
\]
and, by~\eqref{A1},
\[
 \{z: \frac{1}{2}R \leq |z| < R \} \subset A_R(f)^c \cap f^N(U),
\]
it follows that $\partial A_R(f) \cap f^N(U) \neq \emptyset$ and so $\partial A_R(f) \nsubseteq J(f)$. This example also shows that, in Theorem~\ref{main2} part (a), $A_R^{L-1}(f)$ cannot be replaced by $A_R^L(f)$.

\section{The Julia set and $A(f)$}
\setcounter{equation}{0}

   In this section we consider points in $A(f)$ that also belong to the Julia set and prove Theorem~\ref{main3}.

We begin by showing that $A(f)$ has properties corresponding to the properties of $I(f)$ proved by Eremenko~\cite{E} that we listed in~\eqref{E1}. Proofs of parts of this result appeared in~\cite{BH99} and in~\cite{RS05}. The proof of part (a) given here is particularly straightforward.

\begin{theorem}\label{EJ}
Let $f$ be a {\tef}. Then
\begin{itemize}
\item[(a)] $A(f) \cap J(f) \neq \emptyset$;
\item[(b)] $J(f) = \partial A(f)$;
\item[(c)] $J(f) = \overline{A(f) \cap J(f)}$.
\end{itemize}
\end{theorem}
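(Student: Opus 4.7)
The plan is to prove the three parts in the order (a), (c), (b), with each part building on those preceding.

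For part~(a), I would take an Eremenko point $z'\in A(f)$ produced by Theorem~\ref{E}. If $z'\in J(f)$ we are done; otherwise $z'$ lies in some Fatou component $U$ which meets $A(f)$, so Theorem~\ref{main2}(a) gives $\overline{U}\subset A(f)$. Since $J(f)\ne\emptyset$ the domain $U$ is a proper subset of $\C$, hence $\partial U\ne\emptyset$, and thus $\partial U\subset J(f)\cap A(f)$. This is the ``particularly straightforward'' argument advertised just before the theorem statement, and it relies crucially on the new fact that $\partial U\subset A(f)$ whenever $U$ meets $A(f)$.

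For part~(c), the inclusion $\overline{A(f)\cap J(f)}\subset J(f)$ is immediate from closedness of $J(f)$. For the converse, pick any $z_0\in A(f)\cap J(f)$, which exists by~(a). Since $z_0\in A(f)\subset I(f)$, the point $z_0$ cannot be a Picard exceptional value: such a value would be either a fixed point of $f$ or have empty preimage, and in neither case can it escape to infinity under iteration. Hence by the classical density theorem for {\tef}s, the backward orbit $\bigcup_{n\ge 0}f^{-n}(z_0)$ is dense in $J(f)$. Complete invariance of $A(f)$ (Theorem~\ref{basic}(a)) and of $J(f)$ then forces this backward orbit to lie in $A(f)\cap J(f)$, yielding $J(f)\subset\overline{A(f)\cap J(f)}$.

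For part~(b), I treat the two inclusions separately. To prove $\partial A(f)\subset J(f)$, let $z\in F(f)$ with Fatou component $U$. Either $U\cap A(f)\ne\emptyset$, in which case Theorem~\ref{main2}(a) gives $\overline{U}\subset A(f)$ and $z$ is interior to $A(f)$, or $U\cap A(f)=\emptyset$, in which case $U$ is a neighbourhood of $z$ contained in $A(f)^c$ and $z$ is interior to $A(f)^c$; either way $z\notin\partial A(f)$. For the reverse inclusion $J(f)\subset\partial A(f)$, part~(c) supplies $J(f)\subset\overline{A(f)}$, so it remains to show that no point of $J(f)$ is interior to $A(f)$. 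This follows from the classical density of repelling periodic points in $J(f)$: each such point has a bounded orbit and so lies in $A(f)^c$, hence every neighbourhood of every point of $J(f)$ meets $A(f)^c$.

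The only subtle step is the exceptional-value caveat in~(c), but this is easily disposed of by the observation that an exceptional value has a finite orbit and therefore cannot belong to $A(f)$. Otherwise, the entire argument uses only Theorem~\ref{E}, Theorem~\ref{main2}(a), Theorem~\ref{basic}(a), and two standard facts about Julia sets of transcendental entire functions (density of backward orbits and density of repelling periodic points), showing very clearly how much mileage one gets from the levels-based proof of Theorem~\ref{main2}.
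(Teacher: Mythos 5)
Your overall architecture is sound, and your part (a) coincides with the paper's proof. For parts (b) and (c) you take a genuinely different route: the paper deduces $J(f)\subset\overline{A(f)}$ and $J(f)\subset\overline{A(f)\cap J(f)}$ directly from Montel's theorem, using only that these sets are completely invariant and infinite (so their complements' closures are forward invariant and normal), whereas you invoke the density of backward orbits of non-exceptional points together with the density of periodic points in $J(f)$. Both routes work in principle, but the paper's version never has to mention exceptional points, and that matters because of the following.

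There is one genuine error. You assert that an exceptional point (one with finite backward orbit) is either a fixed point or an omitted value, and that in neither case can it escape to infinity, hence it cannot lie in $A(f)$. The second alternative is false: an omitted value can escape, and can even be fast escaping. For $f(z)=e^z$ the point $0$ is omitted, hence exceptional, yet $J(f)=\C$ and $f^{n+1}(0)=M^n(1,f)$ for all $n$, so $0\in A(f)\cap J(f)$. Thus the point $z_0$ you select in part (c) could be exceptional, in which case its backward orbit is a single point and the density theorem gives nothing; the same gap then propagates into your proof of the inclusion $J(f)\subset\partial A(f)$ in part (b), which relies on (c). The repair is short: $A(f)\cap J(f)$ is completely invariant and contains the forward orbit of $z_0$, which is infinite since $z_0$ escapes, so $A(f)\cap J(f)$ contains infinitely many points and in particular a non-exceptional one; run your backward-orbit argument from that point instead. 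With this patch, parts (b) and (c) are correct.
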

\begin{proof}
{\it (a)} We saw in Theorem~\ref{E} that Eremenko's construction shows that there are always points in $A(f)$. Further, if such a point is in a component $U$ of $F(f)$, then it follows from Theorem~\ref{main2} that $\partial U \subset A(f)$. Since $\partial U$ is also in $J(f)$, this shows that $A(f) \cap J(f) \neq \emptyset$.

{\it (b)} We begin by noting that it follows from Theorem~\ref{main2} that $\partial A(f) \subset J(f)$.

 We now show that $J(f) \subset \partial A(f)$, by following the argument used by Eremenko to show that $J(f) = \partial I(f)$. First note that $A(f)$ is completely invariant and so, for $n \in \N$, $f^{-n}(\overline{A(f)}) \subset \overline{A(f)}$ and hence
\[
 f^n(\C \setminus \overline{A(f)}) \subset \C \setminus \overline{A(f)}.
\]
Since $A(f)$ contains infinitely many points, it follows from Montel's theorem that $\C \setminus \overline{A(f)} \subset F(f)$ and so $J(f) \subset \overline{A(f)}$. Since $A(f)$ contains no periodic points, all interior points of $A(f)$ must belong to $F(f)$. Thus $J(f) \subset \partial A(f)$ and hence $J(f) = \partial A(f)$ as claimed.

{\it (c)} Similarly, since $A(f) \cap J(f)$ is completely invariant and contains infinitely many points, $J(f) \subset \overline{A(f) \cap J(f)}$. Clearly, since $J(f)$ is closed, $\overline{A(f) \cap J(f)} \subset J(f)$ and so $\overline{A(f) \cap J(f)} = J(f)$.
\end{proof}

We saw in the previous section that there exist transcendental entire functions $f$ such that $A(f) \cap F(f) \neq \emptyset$. There are many functions, however, for which $A(f) \subset J(f)$. In particular, this is the case for functions in the Eremenko-Lyubich class
\[
 \B = \{f: f \mbox{ is transcendental entire and } \sing (f^{-1}) \mbox{ is bounded}\},
\]
where the set $ \sing (f^{-1})$ consists of the critical values and the finite asymptotic values of $f$; indeed it was shown by Eremenko and Lyubich~\cite{EL92} that $I(f) \subset J(f)$ if $f \in \B$. For many functions in the class $\B$, the escaping set consists of a family of curves to infinity; see~\cite{RRRS} and~\cite{kB07}. In~\cite{RRS} we showed that, in many cases, $A(f)$ consists of these curves with the exception of some of the endpoints. In particular, this is the case if $f$ is a finite composition of functions of finite order in the class $\B$.

Note that, for such functions, it is possible for both $A(f)$ and $I(f)$ to be connected. We now show that this is the case for the function
\[
 f(z) = \cosh^2 z.
\]
This function has order 1 and is in the class $\B$ since its only singular values are the critical values $0$ and $1$, which are taken at odd and even multiples of $i\pi/2$ respectively. Both singular values are in $I(f)$ so $J(f) = \C$ by~\cite[Theorems 7 and 12]{wB93}.

This function has the properties that $f$ is $\pi i$-periodic, $M(r) = f(r)$, for $r>0$,  and $f(iy) \in [-1,1]$, for $y \in \R$. So, if we set $R=1$, then $M(r) > r$ for $r\geq 1$ and, for $n \in \Z$,
\[
 \{x+iy: |x| \geq 1, y = n\pi\} \subset A_R(f),
\]
\[
\{x+iy: |x| < 1, y = n\pi\} \subset A_R^{-1}(f),
\]
\[
\{x+iy: x = 0\} \subset A_R^{-2}(f).
\]
Thus
\[
 E = \bigcup_{n \in \Z} \{x+iy: y = n\pi\} \cup \{x+iy : x=0\} \subset A_R^{-2}(f).
\]
Clearly $E$ is connected and contains $\{z: f(z) = a\}$, for every $a \geq 1$.

We now note that $f$ maps each half-strip of the form
\[
S_n = \{x+iy: x>0, n\pi < y < (n+1)\pi\}, n \in \Z,
\]
univalently onto the cut plane $\C \setminus \{x: x\geq 0\}$, with each vertex of $S_n$ mapping to $1$ and $i(n + 1/2)\pi$ mapping to $0$. Further, $f$ has a continuous extension to $\partial S_n$, mapping it onto $\{x: x\geq 0\}$. Therefore, for each $n \in \Z$, the set $f^{-1}(E) \cap \overline{S_n}$ forms a connected subset of $A_R^{-3}(f)$ and $\partial S_n \subset E \cap f^{-1}(E)$, so $E \cup f^{-1}(E)$ is connected.

Repeating this argument gives that $T = \bigcup_{n \geq 0} f^{-n}(E)$ is a connected subset of $A(f)$ which contains the backwards orbit of any point in $[1,\infty)$. Since $J(f) = \C$ and the backwards orbit of any non-exceptional point is dense in $J(f)$, it follows that $\overline{T} = \C$. Thus
\[
 T \subset A(f) \subset I(f) \subset \C = \overline{T},
\]
and hence both $A(f)$ and $I(f)$ are connected.

In Figure 2, the set of black points is an approximation to part of the set $A(f)$ for this function, consisting of curves that are successive pre-images under $f$ of the positive real axis. The range shown is given by $|\re z|, |\im z| \leq 2$. The original motivation behind this example was to find a function $f$ for which there is an unbounded continuum meeting only finitely many levels of $A(f)$ -- the imaginary axis is such an unbounded continuum when $f(z) = \cosh^2z$.

\begin{figure}[htb]
\begin{center}
\includegraphics[width=8cm]{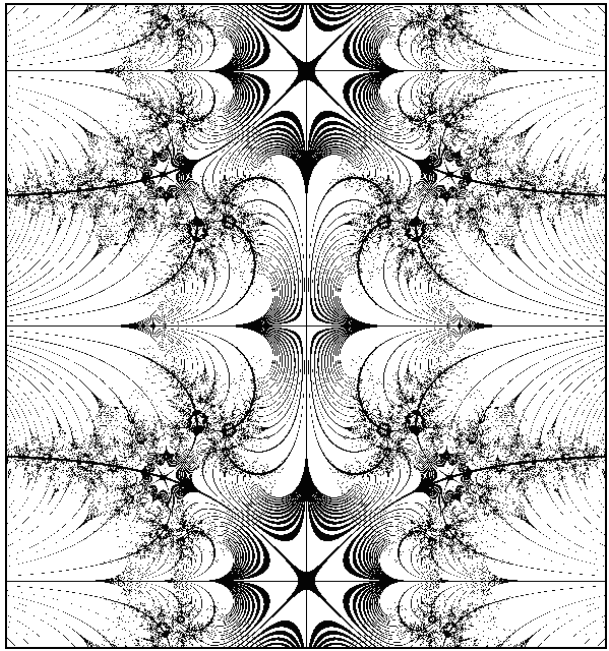}
\caption{Part of $A(f)$ for $f(z)=\cosh^2 z$}
\end{center}
\end{figure}

There are now known to be many examples of functions in the class $\B$ for which $A(f)$ and $I(f)$ are both connected. For example, Rempe~\cite{lR10} has shown that $I(f)$ is connected if $f(z) = e^z + a$, where $a \in (-1,\infty)$, and it follows from his proof that $A(f)$ is also connected for these functions. These results have recently been extended to larger classes of exponential functions in~\cite{Ja} and~\cite{lR10b}.

We now prove the main result of this section.

\begin{proof}[Proof of Theorem~\ref{main3}]
We must show that all the components of $A_R^{L}(f)
\cap J(f)$ are unbounded if and only if $f$ has no multiply connected Fatou components.

We begin by noting that if $U$ is a multiply connected Fatou component of $f$, then it follows from Lemma~\ref{Baker} that there are no unbounded components of $J(f)$.

Now suppose that $f$ has no multiply connected Fatou components and let $z_0 \in A_R^L(f) \cap J(f)$. We know from Theorem~\ref{main1} that $z_0$ belongs to a closed unbounded component $\Gamma$ of $A_R^L(f)$. Further, it follows from Corollary~\ref{AJF} and the remark following it that
\begin{equation}\label{GJ}
\partial \Gamma = \Gamma \cap J(f).
 \end{equation}
We now let $C$ denote the component of $\partial \Gamma$ containing $z_0$ and show that, if $C$ is bounded, then we get a contradiction.

So suppose that $C$ is bounded. Since $\partial \Gamma$ is closed, there exists a Jordan curve $\gamma \subset \C \setminus \partial \Gamma$ that surrounds $C$; see~\cite[page 143]{New}. Since $\Gamma$ is unbounded and connected, and $C \subset \Gamma$, we have $\gamma \cap \Gamma \neq \emptyset$. Since $\gamma \cap \partial \Gamma = \emptyset$, we have $\gamma \subset \Gamma \setminus \partial \Gamma$ and so, by~\eqref{GJ}, $\gamma \subset F(f)$. Also, by~\eqref{GJ}, $C \subset J(f)$ and so $\gamma$ belongs to a multiply connected Fatou component. We have assumed, however, that there are no such components. Thus $C$ must in fact be unbounded. Since
\[
z_0 \in C \subset \partial \Gamma \subset A_R^L(f) \cap J(f),
\]
this completes the proof.
\end{proof}

\section{Spider's web fast escaping sets}
\setcounter{equation}{0}

In the final three sections, we consider the case when $A(f)$ is connected and, moreover, is a spider's web. Recall from the introduction that a connected set $E$ is a spider's web if there
exists a sequence of bounded simply connected domains $G_n$ with $G_n \subset G_{n+1}$,
$\partial G_n \subset E$, for $n \in \N$, and $ \bigcup_{n \in \N}G_n = \C$.

In this section we prove Theorem~\ref{main4} which states that, if $R>0$ is such that $M(r,f) > r$ for $r\geq R$, and $A_R(f)^c$ has a bounded component, then each of $A_R(f)$, $A(f)$ and $I(f)$ is a {\sw}.

\begin{proof}[Proof of Theorem~\ref{main4}]
 Suppose that there exists a bounded component $G$ of $A_R(f)^c$. Then $G$ is simply connected (since all the components of $A_R(f)$ are unbounded by Theorem~\ref{main1}). Further, $\partial G \subset A_R(f)$, since $A_R(f)$ is closed. Now let $G_n = \widetilde{f^n(G)}$. Then, for each $n \in \N$, it follows from~\eqref{A1} and~\eqref{subset} that
\begin{equation}\label{loop}
\partial G_n \subset f^n(\partial G) \subset A_R^n(f) \subset \{z: |z| \geq M^n(R)\}.
 \end{equation}
 Further, if $n$ is sufficiently large, then $\partial G_n$ must surround $0$. Otherwise, for all $n \in \N$, we would have $G_n \subset \{z: |z| \geq M^n(R)\}$ thus contradicting the fact that $G \subset A_R(f)^c$.

   Since $A_R^n(f) \subset A_R(f)$ for $n \in \N$, it follows from~\eqref{loop} that there exists a sequence $(n_k)$ such that $G_{n_k} \subset G_{n_{k+1}}$, $\partial G_{n_k} \subset A_R(f)$ and $\bigcup_{k \in \N}G_{n_k} = \C$. Since each component of $A_R(f)$ is unbounded, we see that each component of $A_R(f)$ will meet $\partial G_{n_k}$ for $k$ sufficiently large. Thus $A_R(f)$ must be connected and indeed must be a spider's web.
Similarly, since each component of $A(f)$ is unbounded, $A(f)$ must be a {\sw}.

  It remains to show that $I(f)$ is a {\sw}. First note that $A(f)$ is connected and is therefore contained in a component $I_0$ of $I(f)$. Further, $I_0$ is a spider's web.
We now show that $I(f) = I_0$.

First suppose that $z_0 \in I(f) \cap J(f)$. It follows from Lemma~\ref{EJ} part (b) that $z_0 \in \partial A(f) \subset \overline{I_0}$. Thus $I_0 \cup \{z_0\}$ is connected. Since $z_0 \in I(f)$ and $I_0$ is a component of $I(f)$, it follows that $z_0 \in I_0$. Thus
\begin{equation}\label{IJ}
I(f) \cap J(f) \subset I_0.
\end{equation}

To complete the proof we show that $I(f) \cap F(f) \subset I_0$. First, we show that, if $V$ is a Fatou component in $I(f)$, then $\partial V \cap I(f) \neq \emptyset$. We can assume that $f^n(V)$ lies in a simply connected component of $F(f)$ for each $n \in \N$ since otherwise $\partial V \subset A(f)$ by Theorem~\ref{Baker1}.

If $\partial V \subset
I(f)$, then the claim is proved. Otherwise, there exist $z_1 \in
\partial V$, $m \in \N$ and a sequence $(n_k)$ such that $f^{n_k}(z_1) \in G_m$, for $k \in \N$, and $G_m \cap J(f) \neq \emptyset$.
Since $V \subset I(f)$, there exists $k \in \N$ such that
$f^{n_k}(V) \cap \partial G_m \neq \emptyset$. Since
$f^{n_k}(V)$ lies in a simply connected component of $F(f)$, there
are no curves in $f^{n_k}(V)$ surrounding points in $J(f)$. So
\[
\partial f^{n_k}(V) \cap \partial G_m \neq \emptyset.
\]
Thus $\partial f^{n_k}(V) \cap I(f) \neq \emptyset$. Since
$\partial f^{n_k}(V) \subset f^{n_k}(\partial V)$, we have
$\partial V \cap I(f) \neq \emptyset$ by the complete invariance
of $I(f)$.

So, whenever $V$ is a Fatou component in $I(f)$, we have
$\partial V \cap I(f)~\neq~\emptyset$. Since $\partial V \subset
J(f)$, it follows from~\eqref{IJ} that $\partial V \cap I_0 \neq
\emptyset$. Now $V \cup (\partial V \cap I_0)$ is a connected
subset of $I(f)$ that meets $I_0$ and so $V \subset I_0$. Hence $I(f) \cap F(f)
\subset I_0$. Together with~\eqref{IJ}, this shows that $I(f) = I_0$ and so $I(f)$ is a spider's web.
\end{proof}

The next result follows easily from Theorem~\ref{main4} and proves Theorem~\ref{main8} part~(a).

\begin{corollary}\label{mc}
Let $f$ be a {\tef} and let $R>0$ be such that $M(r,f) > r$ for $r\geq R$. If $f$ has a multiply connected Fatou component, then each of $A_R(f)$, $A(f)$ and $I(f)$ is a {\sw}.
\end{corollary}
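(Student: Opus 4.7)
The plan is to deduce the result from Theorem~\ref{main4}, which means that it suffices to exhibit a bounded component of $A_R(f)^c$. A natural candidate for such a component is the open disc $D_0 = \{z : |z| < R\}$, which automatically lies in $A_R(f)^c$ because the $n = 0$ clause in the definition of $A_R(f)$ already forces $A_R(f) \subset \{z : |z| \geq R\}$; see~\eqref{A1}. The task thus reduces to enclosing $D_0$ by a Jordan curve lying inside $A_R(f)$.

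To construct such a curve, I would bring in the multiply connected Fatou component $U$ together with the results of Section~4. First I would apply Theorem~\ref{Baker1} to $U$ to obtain $N \in \N$ and some $R' > 0$ with $f^N(\overline{U}) \subset A_{R'}(f)$; inspection of the proof of Theorem~\ref{Baker1} shows that $R'$ may be made as large as desired by choosing the input parameter large enough, so I may assume $R' \geq R$. Then $A_{R'}(f) \subset A_R(f)$, and combining this with the forward invariance $f(A_R(f)) \subset A_R(f)$ from~\eqref{subset} yields $f^n(\overline{U}) \subset A_R(f)$ for every $n \geq N$. Next I would invoke Lemma~\ref{Zheng} to obtain, for all sufficiently large $n$, round annuli $\{r_n < |z| < R_n\} \subset f^n(U)$. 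Because $f^n(U) \to \infty$ by Lemma~\ref{Baker} and each annulus sits inside $f^n(U)$, the inner radii $r_n$ must tend to infinity, so for $n$ large enough $r_n > R$ and any circle $\{|z| = \rho\}$ with $\rho \in (r_n, R_n)$ provides a Jordan curve in $A_R(f)$ that encloses $D_0$. Theorem~\ref{main4} then gives at once that $A_R(f)$, $A(f)$ and $I(f)$ are all spiders' webs.

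The main point requiring care is the matching of the radius $R'$ produced by Theorem~\ref{Baker1} with the prescribed $R$, since Theorem~\ref{Baker1} as stated only guarantees the existence of some $R'$. Either one reads its proof to check that $R'$ can be chosen arbitrarily large, or one first applies Theorem~\ref{main4} to the smaller set $A_{R'}(f)$ and transfers the spider's web property to $A_R(f)$ by noting that the enveloping loops from the $A_{R'}(f)$ spider's web lie in $A_R(f)$, and that every component of $A_R(f)$ is unbounded by Theorem~\ref{main1}, hence must meet each such loop. Aside from this bookkeeping, the argument is an immediate application of the machinery set up earlier in the paper.
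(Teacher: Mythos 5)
Your proposal is correct and follows essentially the same route as the paper: Theorem~\ref{Baker1} places $f^n(U)$ in $A_R(f)$ for large $n$, Lemma~\ref{Baker} (supplemented in your case by the round annuli of Lemma~\ref{Zheng}) shows these sets eventually surround the disc $\{z:|z|<R\}\subset A_R(f)^c$, so $A_R(f)^c$ has a bounded component and Theorem~\ref{main4} applies. Your extra care over matching the radius $R'$ produced by Theorem~\ref{Baker1} with the prescribed $R$ addresses a point the paper's one-line proof glosses over, and either of your two fixes handles it.
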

\begin{proof}
If $f$ has a multiply connected Fatou component $U$, then it follows from Theorem~\ref{Baker1} that, for $n$ sufficiently large, $f^n(U) \subset A_R(f)$. Thus, by Lemma~\ref{Baker}, $A_R(f)^c$ has a bounded component. The result then follows from Theorem~\ref{main4}.
\end{proof}

 We give more examples of functions for which $A_R(f)$, $A(f)$ and $I(f)$ are spiders' webs in Section~8. In fact, all the examples of functions for which we know that either $A(f)$ or $I(f)$ is a spider's web also have the property that $A_R(f)$ is a spider's web. This suggests the
following questions.\\

{\bf Question 2} Can $A(f)$ be a spider's web when $A_R(f)$ is
not a spider's web?\\

{\bf Question 3} Can $I(f)$ be a spider's web when $A(f)$ is not
a spider's web?\\

\section{Properties of spiders' webs}
\setcounter{equation}{0}

In this section we show that, if $A_R(f)$ is a spider's web, then several strong results about the structure of the escaping set hold, namely Theorem~\ref{main6}, Theorem~\ref{holes} and Theorem~\ref{main7}. We begin by proving some basic properties of such spiders' webs.

\begin{lemma}\label{SW1}
Let $f$ be a {\tef}, let $R>0$ be such that $M(r,f) > r$ for $r\geq R$, and let $L \in \Z$.
\begin{itemize}

\item[(a)]  If $G$ is a bounded component of $A_R^L(f)^c$, then $\partial G \subset A_R^L(f)$ and $f^n$ is a proper map of $G$ onto a bounded component of $A_R^{n+L}(f)^c$, for each $n \in \N$.
\item[(b)] If $A_R^L(f)^c$ has a bounded component, then $A_R^L(f)$ is a spider's web and hence every component of $A_R^L(f)^c$ is bounded.
\item[(c)] $A_R(f)$ is a spider's web if and only if $A_R^L(f)$ is a spider's web.
\item[(d)] Let $R'>R$. Then $A_R(f)$ is a spider's web if and only if $A_{R'}(f)$ is a spider's web.

\end{itemize}
\end{lemma}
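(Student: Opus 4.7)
For (a), since $A_R^L(f)$ is closed, a bounded component $G$ of its complement is open with $\partial G \subset A_R^L(f)$, and because every component of $A_R^L(f)$ is unbounded by Theorem~\ref{main1}, $G$ is simply connected. The key step is the inclusion $f(A_R^L(f)^c) \subset A_R^{L+1}(f)^c$. I would prove the contrapositive: if $z \notin A_R^L(f)$ but $f(z) \in A_R^{L+1}(f)$, then re-indexing gives $|f^n(z)|\geq M^{n+L}(R)$ for all $n \geq \max(1,-L)$, and when $L \geq 0$ the one missing bound $|z|\geq M^L(R)$ is supplied by combining $|f(z)| \geq M^{L+1}(R) = M(M^L(R))$ with $|f(z)| \leq M(|z|)$ and the monotonicity of $M$; for $L<0$ the two index ranges already agree. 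Either way $z \in A_R^L(f)$, contradiction. Hence $f(G) \subset A_R^{L+1}(f)^c$ is open and connected, so it lies in some component $H$; since $\partial f(G) \subset f(\partial G) \subset A_R^{L+1}(f)$ by~\eqref{subset}, we have $\partial f(G) \cap H = \emptyset$, so $f(G)$ is clopen in the connected set $H$, forcing $f(G)=H$. Boundedness is inherited from $G$, and properness of $f|_G : G \to H$ follows since any sequence in $G$ converging to a boundary point would map into $f(\partial G) \subset A_R^{L+1}(f)$, which is disjoint from $H$. Iterating delivers the full statement.

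For (b), apply (a) to obtain bounded simply connected components $G_n:=f^n(G)$ of $A_R^{L+n}(f)^c$, with $\partial G_n \subset A_R^{L+n}(f) \subset \{|z|\geq M^{L+n}(R)\}$. For each $n$, the disc $B_n:=\{|z|<M^{L+n}(R)\}$ is disjoint from $\partial G_n$; connectedness of $B_n$ forces the dichotomy $B_n\subset G_n$ or $B_n\cap G_n=\emptyset$. If the second alternative held for every $n$, then $|f^n(z)|\geq M^{L+n}(R)$ for all $z\in G$, contradicting $G\subset A_R^L(f)^c$; so $B_{n_0}\subset G_{n_0}$ for some $n_0$. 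A short induction propagates this to every $n\geq n_0$, using $0 \in B_n \subset G_n$ and the estimate $|f(0)|\leq M(R)<M^{L+n+1}(R)$ to supply a point of $B_{n+1}\cap G_{n+1}$. Next I extract a nested exhausting subsequence by choosing $n_{k+1}$ so that $M^{L+n_{k+1}}(R)>\sup\{|z|:z\in G_{n_k}\}$, which forces $G_{n_k}\subset B_{n_{k+1}}\subset G_{n_{k+1}}$. The $G_{n_k}$ are bounded simply connected, their boundaries lie in $A_R^L(f)$ by~\eqref{subset}, and they exhaust $\C$. Connectedness of $A_R^L(f)$ then follows exactly as in the proof of Theorem~\ref{main4}: any component is unbounded (Theorem~\ref{main1}), hence meets $\partial G_{n_k}$ for large $k$, and $\partial G_{n_k}$ is itself a connected subset of $A_R^L(f)$ lying in a single component.

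For (c), I prove the one-step equivalence between $A_R^L(f)$ being a spider's web and $A_R^{L-1}(f)$ being a spider's web, then iterate. Forward: since $A_R^L(f) \subset A_R^{L-1}(f)$ by~\eqref{subset}, the exhausting family transports verbatim and the connectedness argument of (b) applies. Backward: if $A_R^{L-1}(f)$ is a spider's web then its complement has a bounded component by (b); (a) pushes it forward to a bounded component of $A_R^L(f)^c$, and (b) upgrades this to a spider's web. For (d), when $A_{R'}(f)$ is a spider's web then $A_R(f)$ is a spider's web by direct transport since $A_{R'}(f) \subset A_R(f)$. Conversely, assume $A_R(f)$ is a spider's web; by (c), $A_R^L(f)$ is a spider's web for every $L \geq 0$, and choosing $L$ with $M^L(R) > R'$ gives $A_R^L(f) = A_{M^L(R)}(f) \subset A_{R'}(f)$, so its exhausting family transports to $A_{R'}(f)$ and the connectedness argument finishes. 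The main obstacle will be the geometric argument in part (b): combining the simply-connected dichotomy with the inductive propagation $G_n \supset B_n$ and the boundedness of each $G_{n_k}$ to produce a genuinely nested exhausting subsequence takes some care. The bookkeeping in part (a) around the range $n + L \geq 0$ is fiddly but essentially routine.
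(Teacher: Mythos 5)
Your proof is correct and takes essentially the same route as the paper's: forward invariance of the complementary levels plus the clopen argument for (a), the surrounding/exhaustion argument borrowed from Theorem~\ref{main4} for (b), and transport of bounded complementary components between levels for (c) and (d). You simply supply details the paper leaves implicit, most notably the contrapositive verification that $f(A_R^L(f)^c)\subset A_R^{L+1}(f)^c$.
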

\begin{proof}
{\it (a)} Let $G$ be a bounded component of $A_R^L(f)^c$. Since $A_R^L(f)$ is closed, $\partial G \subset A_R^L(f)$. Thus, for each $n \in \N$, $f^n(G) \subset A_R^{L+n}(f)^c$ and $f^n(\partial G) \subset A_R^{L+n}(f)$. It follows that $\partial f^n(G) = f^n(\partial G)$, so $f^n$ is a proper map of $G$ onto $f^n(G)$ and $f^n(G)$ is a bounded component of $A_R^{L+n}(f)^c$.

{\it (b)} Let $G$ be a bounded component of $A_R^L(f)^c$. Then, as in part (a), $\partial f^n(G) \subset A_R^{L+n}(f)$ and, for $n$ sufficiently large, $\partial f^n(G)$ surrounds $0$, since $f^n(G) \subset A_R^{L+n}(f)^c$. Since each component of $A_R^L(f)$ is unbounded by Theorem~\ref{main1}, it now follows (as in the proof of Theorem~\ref{main4}) that $A_R^L(f)$ is a spider's web. The final assertion of part (b) follows from the definition of a spider's web.

{\it (c)} Suppose that $A_R(f)$ is a spider's web. If $L<0$ then any component of $A_R^L(f)^c$ is contained in a component of $A_R(f)^c$ and is therefore bounded. So, by part~(b), $A_R^L(f)$ is a spider's web. If $L>0$ and $G$ is a component of $A_R(f)^c$ then, by part~(a), $f^L(G)$ is a bounded component of $A_R^L(f)^c$ and so, by part~(b), $A_R^L(f)$ is a spider's web. The converse result follows by similar arguments.

{\it (d)} Suppose that $A_{R'}(f)$ is a {\sw}. Since $R'>R$, each component of $A_{R}(f)^c$ is contained in a component of $A_{R'}(f)^c$, so it follows from part (b) that $A_{R}(f)$ is also a {\sw}. Now suppose that $A_{R}(f)$ is a {\sw}. There exists $L > 0$ such that $M^L(R) \geq R'$ and so each component of $A_{R'}(f)^c$ is contained in a component of $A_{M^L(R)}(f)^c = A_R^L(f)^c$. It then follows from parts (b) and (c) that $A_{R'}(f)$ is also a {\sw}.
\end{proof}

In order to prove the main results of this section we introduce some terminology.

\begin{definition}
Let $f$ be a {\tef} and let $R>0$ be such that $M(r,f) > r$ for $r\geq R$. If $A_R(f)$ is a {\sw} then, for each $n \geq 0$,
\begin{itemize}
\item $H_n$ denotes the component of $A_R^n(f)^c$ that contains 0;

\item$L_n = \partial H_n$.
\end{itemize}
  We say that $(H_n)_{n \geq 0}$ is the {\it sequence of fundamental holes} for $A_R(f)$ and $(L_n)_{n \geq 0}$ is the {\it sequence of fundamental loops} for $A_R(f)$.
\end{definition}

Before proving the main results of this section, we prove some preliminary results about these holes and loops.

\begin{lemma}\label{SW3}
Let $f$ be a {\tef} and let $R>0$ be such that $M(r,f) > r$ for $r\geq R$. Suppose that $A_R(f)$ is a {\sw} and $(H_n)$ and $(L_n)$ are the sequences of fundamental holes and loops for $A_R(f)$. Then
\begin{itemize}
\item[(a)]
$H_n \supset \{z: |z| < M^n(R)\}$ and $L_n \subset A_R^n(f)$, for $n \geq 0$;
\item[(b)] $H_{n+1} \supset H_n$, for $n \geq 0$;
\item[(c)]  for $n \in \N$ and $m \geq 0$,
\[
f^n(H_m) = H_{m+n}, \; f^n(L_m) = L_{m+n};
\]

\item[(d)] there exists $N \in \N$ such that, for $n \geq N$ and $m \geq 0$,
\[
  L_m \cap L_{n+m} = \emptyset;
\]

\item[(e)] if $L \in \Z$ and $G$ is a component of $A_R^L(f)^c$, then $f^n(G) = H_{n+L}$ and $f^n(\partial G) = L_{n+L}$, for $n$ sufficiently large;
\item[(f)] if there are no multiply connected Fatou components, then $L_n \subset J(f)$ for $n \geq 0$.
\end{itemize}
\end{lemma}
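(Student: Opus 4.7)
My plan is to handle the six parts in the stated order, since each proof uses earlier parts of the lemma. Parts (a), (b), and (f) are essentially bookkeeping: for (a), relation \eqref{A1} gives $A_R^n(f)\subset\{|z|\geq M^n(R)\}$, so the open disc $\{|z|<M^n(R)\}$ is a connected subset of $A_R^n(f)^c$ containing $0$ and must therefore lie in the component $H_n$, while $L_n=\partial H_n\subset A_R^n(f)$ is immediate because $A_R^n(f)$ is closed; for (b), the inclusion $A_R^{n+1}(f)\subset A_R^n(f)$ makes $H_n$ a connected subset of $A_R^{n+1}(f)^c$ containing $0$, so $H_n\subset H_{n+1}$; and for (f), Corollary~\ref{AJF} already yields $\partial A_R^n(f)\subset J(f)$ when every Fatou component is simply connected, and $L_n\subset\partial A_R^n(f)$.

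The substantive step is part (c), which I would prove by induction on $n$, reducing the whole statement to $f(H_L)=H_{L+1}$ for every $L\geq 0$. By Lemma~\ref{SW1}(a), $f(H_L)$ is a bounded component of $A_R^{L+1}(f)^c$, and to show it is the component $H_{L+1}$ containing $0$ it suffices to produce one common point. The point $f(0)$ works: since $M$ is strictly increasing on $[0,\infty)$ for a nonconstant entire function (maximum modulus principle), $|f(0)|=M(0)<M(M^L(R))=M^{L+1}(R)$, so $f(0)\in H_{L+1}$ by part (a); and $f(0)\in f(H_L)$ since $0\in H_L$. Two components of the same open set that share a point must coincide. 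The loop identity $f^n(L_m)=L_{m+n}$ then drops out of the proper-map assertion in Lemma~\ref{SW1}(a): $f^n(\partial H_m)=\partial f^n(H_m)=\partial H_{m+n}=L_{m+n}$.

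For part (d), I would run a direct size comparison using part (c). Set $\rho_0=\sup_{H_0}|z|$, which is finite by Lemma~\ref{SW1}(b). Since $H_m=f^m(H_0)$ by part (c), the elementary bound $|f^m(z)|\leq M^m(|z|)$ gives $\rho_m:=\sup_{H_m}|z|\leq M^m(\rho_0)$. Now choose $N$ with $M^N(R)>\rho_0$; then for all $n\geq N$ and $m\geq 0$, the monotonicity of $M^m$ yields $\rho_m\leq M^m(\rho_0)<M^{m+n}(R)$. Since by part (a) any point of $L_{n+m}$ has modulus at least $M^{n+m}(R)$ while any point of $L_m$ has modulus at most $\rho_m$, the two loops are disjoint.

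For part (e), the idea is to pick a single point in $G$ whose orbit enters and remains in the central hole. Given any $z\in G\subset A_R^L(f)^c$, the definition supplies some $k^*\geq\max(0,-L)$ with $|f^{k^*}(z)|<M^{k^*+L}(R)$; iterating $|f(w)|\leq M(|w|)$ then yields $|f^n(z)|\leq M^{n-k^*}(|f^{k^*}(z)|)<M^{n+L}(R)$ for every $n\geq k^*$. By part (a), $f^n(z)\in H_{n+L}$; but $f^n(z)\in f^n(G)$, and $f^n(G)$ is a component of $A_R^{n+L}(f)^c$ by Lemma~\ref{SW1}(a), so sharing a point forces $f^n(G)=H_{n+L}$, and then $f^n(\partial G)=L_{n+L}$ is automatic. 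The main conceptual hurdle in the whole lemma is identifying $f(H_L)$ with $H_{L+1}$ in part (c), because that is the only place where a strict inequality—rather than pure topology—is needed; once (c) is in hand, (d) and (e) are largely size-comparison consequences of $H_m=f^m(H_0)$ and the standard bound $|f^m(z)|\leq M^m(|z|)$.
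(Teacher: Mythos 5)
Your proof is correct and follows essentially the same route as the paper: parts (a), (b) and (f) by the same bookkeeping, part (c) via Lemma~\ref{SW1}(a) together with the observation that the orbit of $0$ remains in the fundamental holes, and part (d) by the same modulus comparison between $L_m$ and $L_{n+m}$. The only cosmetic difference is in part (e), where you locate a point of $f^n(G)$ directly inside $H_{n+L}$ using part (a), whereas the paper argues by contradiction that $f^n(G)$ cannot lie outside $L_{n+L}$ for all $n$; both arguments are sound.
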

\begin{proof}
{\it (a)} This follows from~\eqref{A1} and Lemma~\ref{SW1} part (a).

{\it (b)} Let $n \geq 0$. We know that $H_n \subset A_R^n(f)^c \subset A_R^{n+1}(f)^c$. Also, $H_n$ contains $0$. So, by definition, $H_n \subset H_{n+1}$.

{\it (c)} Let $n \in \N$ and $m \geq 0$. First note that $0 \in H_m$ and that $|f^n(0)| < M^n(R) \leq M^{n+m}(R)$, so $f^n(0) \in H_{n+m}$. The result now follows from Lemma~\ref{SW1} part (a).

{\it(d)} Since $L_0$ is bounded, it is clear that there exists $N \in \N$ such that $L_0 \subset \{z: |z| < M^N(R)\}$ and hence $L_0 \cap L_n = \emptyset$ for $n \geq N$. The result now follows from part~(c).

{\it (e)} If $L \in \Z$ and $G$ is a component of $A_R^L(f)^c$, then it follows from Lemma~\ref{SW1} part~(a) that, for each $n \in \N$, $f^n(G)$ is a component of $A_R^{n+L}(f)^c$. Thus, for each $n \in \N$,  either $f^n(G) = H_{n+L}$ or $f^n(G)$ lies outside $L_{n+L}$, in which case
\begin{equation}\label{Hn}
f^n(G) \subset \{z: |z| \geq M^{n+L}(R)\}.
\end{equation}
Since $G \subset A_R^L(f)^c$,~\eqref{Hn} must fail for large values of $n$. Thus, if $n$ is sufficiently large, $f^n(G) = H_{n+L}$ and so $f^n(\partial G) = L_{n+L}$, by Lemma~\ref{SW1} part~(a).

{\it (f)} Note that, for each $n \geq 0$, $L_n \subset \partial A_R^n(f)$. If there are no multiply connected Fatou components, then it follows from Corollary~\ref{AJF} that, for each $n \geq 0$, $\partial A_R^n(f) \subset J(f)$ and so $L_n \subset J(f)$.
\end{proof}

We are now in a position to prove the main results of this section.

\begin{proof}[Proof of Theorem~\ref{main6}]
Let $f$ be a {\tef}, let $R>0$ be such that $M(r,f) > r$ for $r\geq R$, and suppose that $A_R(f)$ is a spider's web.

{\it (a)} If $f$ has no multiply connected Fatou components, then it follows from Lemma~\ref{SW3} part (f) that
 \begin{equation}\label{l}
 L_n \subset A_R(f) \cap J(f) \subset A(f) \cap J(f) \subset I(f) \cap J(f) \subset J(f),
 \end{equation}
 where $(L_n)$ is the sequence of fundamental loops for $A_R(f)$.
  Since each component of $A_R(f) \cap J(f)$ is unbounded, by Theorem~\ref{main3}, this implies that $A_R(f) \cap J(f)$ is a {\sw}. Similarly, $A(f) \cap J(f)$ must be a {\sw}. It follows from Theorem~\ref{EJ} part~(c) that
\[
 A(f) \cap J(f) \subset I(f) \cap J(f) \subset J(f) = \overline{A(f) \cap J(f)}.
\]
Thus each of $I(f) \cap J(f)$ and $J(f)$ is also connected and moreover, by~\eqref{l}, is a {\sw}.

{\it (b)} We must show that $f$ has no unbounded Fatou components. If $f$ has a multiply connected Fatou component, then this follows from Lemma~\ref{Baker}. If $f$ has no multiply connected Fatou components, then the result follows from part~(a).
\end{proof}

Next we prove Theorem~\ref{holes} which concerns the components of $A(f)^c$.

\begin{proof}[Proof of Theorem~\ref{holes}]
Let $f$ be a {\tef}, let $R>0$ be such that $M(r,f) > r$ for $r\geq R$, and suppose that $A_R(f)$ is a spider's web.

{\it (a)} Let $K$ be a component of $A(f)^c$. We must show that $K$ is compact.  For each $L \in \N$, we
let $G_L$ denote the component of $A_R^{-L}(f)^c$ that contains $K$. Clearly $K \subset \bigcap_{L \in \N} G_L \subset A(f)^c$. Moreover, it follows from Lemma~\ref{SW3} parts (d) and (e) that there exists $N \in \N$ such that
\[
\overline{G_{L+N}} \subset G_L \subset \overline{G_L},
\]
for $L \in \N$, and so
\begin{equation}\label{G1}
 \bigcap_{L \in \N} G_L = \bigcap_{L \in \N} \overline{G_L}.
\end{equation}
Thus $\bigcap_{L \in \N} G_L$ is a compact connected set in $A(f)^c$. Since $K \subset \bigcap_{L \in \N} G_L$ and $K$ is a component of $A(f)^c$, it follows that
\begin{equation}\label{G}
K = \bigcap_{L \in \N} G_L
\end{equation}
and that $K$ is compact.

{\it (b)} We now show that every point $z_0 \in J(f)$ is the limit of a sequence of points, each of which lies in a distinct component of $A(f)^c$. We begin by considering the case that $z_0 \in J(f) \cap A(f)$. We know from Theorem~\ref{EJ} part~(b) that $J(f) = \partial A(f)$ and so there exists a sequence $(z_n)$ of points in $A(f)^c$ with $z_0 = \lim_{n \to \infty} z_n$. If the points $z_n$ belong to only finitely many components of $A(f)^c$ then, by part~(a), $z_0$ also belongs to one of these components and is therefore in $A(f)^c$. This, however, is a contradiction. So the result holds if $z_0 \in J(f) \cap A(f)$.

We now consider the case that $z_0 \in J(f) \setminus A(f)$. We know from Theorem~\ref{EJ} part (c) that $J(f) = \overline{A(f) \cap J(f)}$. So there exists a sequence $(z'_n)$ of points in $A(f) \cap J(f)$ with $z_0 = \lim_{n \to \infty}z_n'$. We know that the result holds for each point $z_n'$ and this is sufficient to show that the result also holds for $z_0$.
\end{proof}

We now prove Theorem~\ref{main7}, which gives various subsets of $I(f)$ on which the iterates of $f$ escape to infinity uniformly.

\begin{proof}[Proof of Theorem~\ref{main7}]
Let $f$ be a {\tef}, let $R>0$ be such that $M(r,f) > r$ for $r\geq R$, and suppose that $A_R(f)$ is a spider's web.

Let $z_0 \in I(f)$. We show that $z_0$ belongs to an unbounded continuum in $I(f)$ on which all points
escape to infinity uniformly. If $z_0 \in A_R^L(f)$, for some $L \in \Z$, then we know from Theorem~\ref{main1} that $z_0$ belongs to an unbounded component of $A_R^L(f)$. Clearly, all points in this component escape to infinity uniformly.

Now suppose that $z_0 \in I(f) \setminus A(f)$. We let $K$ denote the component of $A(f)^c$ containing $z_0$ and, for $L \in \N$,
let $G_L$ denote the component of $A_R^{-L}(f)^c$ that contains $K$, and let $C_L = \partial G_L$. Note that $C_L$ is connected. It follows from~\eqref{subset} and~\eqref{G} that
\[
G_{L+1} \subset G_L \mbox{ and } K = \bigcap_{L \in \N} G_L.
\]
We now let
\[
 K_1 = A_R^{-1}(f) \mbox{ and } K_{L+1} = A_R^{-(L+1)}(f) \cap \overline{G_L}, \mbox{ for each } L \in \N.
\]
Note that
\begin{equation}\label{conn}
C_L \subset K_L \cap K_{L+1}, \mbox{ for each } L \in \N.
\end{equation}

We claim that
\[
C = K \cup \bigcup_{L \in \N}K_L
\]
is an unbounded continuum in $I(f)$ on which all points escape to infinity uniformly. We begin by showing that $C$ is an unbounded continuum. First,  it follows from Lemma~\ref{SW1} part (c) that $A_R^{-L}(f)$ is a {\sw} and hence an unbounded continuum, for each $L \in \N$. In particular, $K_1$ is an unbounded continuum.

We now claim that $K_{L+1}$ is connected, for $L \in \N$. This is the case because a separation of $K_{L+1}$ would lead to a separation of $A_R^{-(L+1)}(f)$, since $K_{L+1} = A_R^{-(L+1)}(f) \cap \overline{G_L}$ and $\partial G_{L} \subset A_R^{-(L+1)}(f)$. But there can be no separation of $A_R^{-(L+1)}(f)$ since this set is connected.

Further, it follows from~\eqref{conn} that $K_L \cup K_{L+1}$ is connected, for each $L \in \N$. Thus $\bigcup_{L \in \N}K_L$ is unbounded and connected.
It follows that $K \cup \bigcup_{L \in \N} K_L$ is connected because any open set that contains $K$ must also contain a set $C_L = \partial G_L$ for some $L \in \N$, by~\eqref{G1} and~\eqref{G}, and so meet $K_L$. Thus
$C = K \cup \bigcup_{L \in \N}K_L$ is an unbounded continuum as claimed.

We now show that all the points in $C$ escape to infinity uniformly. Let $(H_n)$ and $(L_n)$ denote the sequences of fundamental holes and loops for $A_R(f)$. Recall that $z_0 \in K \cap I(f)$ and so, for each $m \in \N$, there exists $N_m \in \N$ such that
\[
f^n(z_0) \in \C \setminus H_m, \; \mbox{ for } n \geq N_m.
\]
We claim that
\begin{equation}\label{image}
 f^n(K \cup \bigcup_{L \in \N} K_{L+1}) \subset \C \setminus H_m, \;  \mbox{ for } n \geq N_m,
\end{equation}
and so all points in $K \cup \bigcup_{L \in \N} K_{L+1}$ escape to infinity uniformly.

To prove this, we let $m \in \N$ and $n \geq N_m$ and consider the images of the sets $G_L$ for $L \in \N$. We begin by noting that it follows from Lemma~\ref{SW1} part (a) and~\eqref{subset} that, for $L \in \N$,
\begin{equation}\label{property1}
 f^n(G_L) \subset A_R^{n-L}(f)^c \mbox{ and } f^n(K_{L+1}) \subset A_R^{n-L-1}(f).
  \end{equation}
  Also
  \begin{equation}\label{property2}
   H_m \subset A_R^m(f)^c \mbox{ and } L_m \subset A_R^m(f).
\end{equation}
By the second relation in~\eqref{property1} and the first in~\eqref{property2} we have
 \begin{equation}\label{K}
 f^n(K_{L+1}) \subset \C \setminus H_m, \; \mbox{ for } n-L > m.
 \end{equation}
 Also, since $n\geq N_m$, we have $f^n(z_0) \in \C \setminus H_m$ and so it follows from~\eqref{property1} and~\eqref{property2} that
 \begin{equation}\label{GL}
  f^n(\overline{G_L}) \subset \C \setminus H_m, \; \mbox{ for } n-L \leq m,
 \end{equation}
 because $z_0 \in G_L$ and $f^n(G_L)$ is contained in a component of $A_R^m(f)^c$ different from $H_m$ for such~$L$.

 Since $K = \bigcap_{L \in \N}G_L$, it follows from~\eqref{GL} that
 \begin{equation}\label{GH}
 f^n(K) \subset \C \setminus H_m, \; \mbox{ for } n \geq N_m.
 \end{equation}
  Since $K_{L+1} \subset \overline{G_L}$, it also follows from~\eqref{GL} that
  \[
 f^n(K_{L+1}) \subset \C \setminus H_m, \; \mbox{ for } n-L \leq m.
 \]
 Together with~\eqref{K} and~\eqref{GH}, this is sufficient to show that~\eqref{image} holds. To conclude the proof, we note that all points escape to infinity uniformly on $K_1 = A_R^{-1}(f)$. Together with~\eqref{image}, this implies that all points escape to infinity uniformly on $C = K \cup \bigcup_{L \in \N}K_L$.
\end{proof}

\section{Examples of spiders' webs}
\setcounter{equation}{0}

We begin this section by proving Theorem~\ref{SW2} which shows in particular that, if $f$ belongs to the widely studied Eremenko-Lyubich class $\B$, then $A_R(f)$ is {\it not} a spider's web.

\begin{proof}[Proof of Theorem~\ref{SW2}]
 If $A_R(f)$ is a spider's web, then there is no path to $\infty$ on which $f$ is bounded, by Lemma~\ref{SW3}~part~(c), since this implies that $|f(z)| \geq M^{m+1}(R)$, for $z \in L_m$, $m \in \N$.

If $f \in \B$, then there must be a path to $\infty$ on which $|f|$ is constant (see~\cite{EL92}), so $A_R(f)$ is not a spider's web. Finally, if $f$ has an exceptional value $\alpha$, then $f(z) = \alpha + (z-\alpha)^p e^{g(z)}$, for some $p \geq 0$ and some entire function $g$ (see~\cite{wB93}), so
\[
h(z) = \frac{1}{f(z) - \alpha} = \frac{e^{-g(z)}}{(z-\alpha)^p}
\]
is a transcendental meromorphic function with one pole, at $\alpha$. Hence $h$ has asymptotic value $\infty$ by Iversen's Theorem (see, for example,~\cite[p.286]{Nev}), so $f$ has asymptotic value $\alpha$ and is therefore bounded on a path to $\infty$.
\end{proof}

 The rest of this section is devoted to showing that there are large classes of functions for which $A_R(f)$ is a spider's web whenever $M(r,f) > r$ for $r\geq R>0$; in particular, we prove parts~(b)--(e) of Theorem~\ref{main8}. (Recall that we proved Theorem~\ref{main8} part (a) at the end of Section 6.) We obtain these examples by using the following result.

\begin{theorem}\label{gamman}
Let $f$ be a {\tef} and let $R>0$ be such that $M(r,f) > r$ for $r\geq R$. Then $A_R(f)$ is a {\sw} if and only if there exists a sequence $(G_n)_{n \geq 0}$ of bounded simply connected domains such that, for all $n \geq 0$,
\begin{equation}\label{gamma1}
G_n \supset \{z: |z| < M^n(R)\}
\end{equation}
and
\begin{equation}\label{gamma2}
  G_{n+1} \mbox{ is contained in a bounded component of } \C \setminus f(\partial G_n).
\end{equation}
\end{theorem}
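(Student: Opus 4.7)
The forward direction follows directly from Lemma~\ref{SW3}: if $A_R(f)$ is a spider's web, take $G_n = H_n$, the $n$th fundamental hole. Part~(a) of that lemma gives $G_n \supset \{z : |z| < M^n(R)\}$, and part~(c) gives $f(\partial G_n) = L_{n+1} = \partial H_{n+1}$, so $G_{n+1} = H_{n+1}$ is a bounded component of $\C \setminus f(\partial G_n)$.

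For the backward direction, I assume $(G_n)$ exists and let $V_n$ denote the bounded component of $\C \setminus f(\partial G_n)$ containing $G_{n+1}$. Since $V_n \supset \{z : |z| < M^{n+1}(R)\}$ and $f(z) \in f(\partial G_n)$ cannot lie in $V_n$ for $z \in \partial G_n$, we immediately obtain $|f(z)| \geq M^{n+1}(R)$ on $\partial G_n$. By Theorem~\ref{main4} it suffices to produce a bounded component of $A_R(f)^c$. Letting $K_n$ denote the component of $A_R^n(f)^c$ containing $0$, one checks that $f(A_R^n(f)^c) \subset A_R^{n+1}(f)^c$, and that $[0,f(0)] \subset \{z : |z| < M^{n+1}(R)\} \subset A_R^{n+1}(f)^c$ (since $|f(0)| < M(R) \leq M^{n+1}(R)$), which gives $f(K_n) \subset K_{n+1}$. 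From this one obtains the key implication $K_{n+1} \subset G_{n+1} \Rightarrow K_n \subset G_n$: any $z \in K_n \cap \partial G_n$ would satisfy $f(z) \in f(K_n) \subset K_{n+1} \subset G_{n+1}$, contradicting $f(z) \in f(\partial G_n) \subset \C \setminus V_n \subset \C \setminus G_{n+1}$.

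By reverse induction it now suffices to find one $N$ with $K_N \subset G_N$. The plan is to prove the stronger statement $\partial G_n \subset A_R^n(f)$ for every $n$, which forces $K_n \subset G_n$ directly since $0 \in K_n \cap G_n$ and the connected set $K_n \subset A_R^n(f)^c$ cannot meet the disjoint set $\partial G_n \subset A_R^n(f)$. The cases $k = 0, 1$ of the required estimate $|f^k(z)| \geq M^{n+k}(R)$ for $z \in \partial G_n$ are immediate from the hypotheses, and for $k \geq 2$ one tracks the orbit through successive levels, using the argument principle applied to the simply connected $G_n$ (which yields $V_n \subset f(G_n)$) together with the disjointness $f(\partial G_m) \cap G_{m+1} = \emptyset$ at each level $m$. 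The main obstacle will be closing this induction for $k \geq 2$, where $f^k(z)$ need not land on any $\partial G_m$ and one must exploit the full nested structure of $(G_n)$. Once $\partial G_0 \subset A_R(f)$ is established, $K_0 \subset G_0$ is bounded, and Theorem~\ref{main4} yields that $A_R(f)$ is a spider's web.
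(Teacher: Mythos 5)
Your forward direction is exactly the paper's (take $G_n=H_n$, the fundamental holes, and apply Lemma~\ref{SW3}), and your reduction of the backward direction to finding one $N$ with $K_N\subset G_N$ is sound as far as it goes. But the route you propose for that base case --- proving $\partial G_n\subset A_R^n(f)$ for all $n$ --- contains a genuine gap that cannot be closed, and you have in effect flagged it yourself. Hypothesis~\eqref{gamma2} controls only the \emph{first} image of $\partial G_n$: it forces $f(\partial G_n)$ to avoid the bounded complementary component $V_n\supset G_{n+1}\supset\{z:|z|<M^{n+1}(R)\}$, whence $|f|\geq M^{n+1}(R)$ on $\partial G_n$. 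It says nothing about where a point $w\in f(\partial G_n)$ goes under further iteration unless $w$ happens to lie on $\partial G_{n+1}$, which it need not: $f(\partial G_n)$ is merely some compact set in $\C\setminus V_n$, and a point of it may perfectly well land, after one more application of $f$, inside $\{z:|z|<M^{n+2}(R)\}$. So $|f^2|$ on $\partial G_n$ is simply not controlled by the hypotheses; the claim $\partial G_n\subset A_R^n(f)$ is false for a general admissible sequence $(G_n)$ (one can bulge $\partial G_0$ through a point not in $A_R(f)$ without violating~\eqref{gamma2}), and ``exploiting the full nested structure'' cannot rescue it, because that structure constrains only first images of boundaries.

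The paper proves something much weaker that still suffices. If the component $G$ of $A_R(f)^c$ containing $\{z:|z|<R\}$ were not contained in $G_0$, there would be a compact connected set $K\subset G$ meeting both $\{z:|z|<R\}$ and $\partial G_0$. Then for each $n$ the connected set $f^n(K)$ meets $\{z:|z|<M^n(R)\}\subset G_n$ (at the image of the inner point) and also meets $\C\setminus G_n$ (at the image under $f$ of a point where $f^{n-1}(K)$ crosses $\partial G_{n-1}$, using~\eqref{gamma2}); hence $f^n(K)$ crosses $\partial G_n$, so $K_n=\{z\in K:|f^n(z)|\geq M^n(R)\}\neq\emptyset$ by~\eqref{gamma1}. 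These sets are nested nonempty compacta, so their intersection gives a point of $K\cap A_R(f)$, contradicting $K\subset A_R(f)^c$. In other words, one needs only a single fast escaping point on each continuum crossing from the inner disc to $\partial G_0$, not fast escape of the whole boundary curve. You should replace your ``stronger statement'' with this connectivity-and-nested-compacta argument.
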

\begin{proof}
If $A_R(f)$ is a {\sw} and $(H_n)$ is the sequence of fundamental holes for $A_R(f)$, then it follows from Lemma~\ref{SW3} parts (a) and (b) that the sets $G_n = H_{n}$ have properties~\eqref{gamma1} and~\eqref{gamma2}.

Now suppose that there exists a sequence $(G_n)$ of bounded simply connected domains satisfying~\eqref{gamma1} and \eqref{gamma2}, and let $G$ denote the component of $A_R(f)^c$ containing $\{z: |z| < R\}$. We will show that $G$ is bounded and hence, by Lemma~\ref{SW1} part (b), that $A_R(f)$ is a {\sw}. In order to do this, we suppose that $G$ is not contained in $G_0$ and obtain a contradiction. For simplicity, we put $\gamma_n = \partial G_n$, for $n \geq 0$.

If $G$ is not a subset of $G_0$, then there exists a compact connected set $K$ such that
\begin{equation}\label{gamma3}
 K \subset G, \; K \cap \{z:|z|<R\} \neq \emptyset \; \mbox{ and } \; K \cap \gamma_0 \neq \emptyset.
\end{equation}
It follows from~\eqref{gamma2} and~\eqref{gamma3} that $f(K)$ is a compact connected set with
\[
f(K) \cap \{z: |z| < M(R)\} \neq \emptyset \; \mbox{ and } \; f(K) \cap \gamma_1 \neq \emptyset.
\]
Thus, by~\eqref{gamma1},
\[
 K_1 = \{z \in K: |f(z)| \geq M(R)\} \neq \emptyset.
\]
Using~\eqref{gamma1} and~\eqref{gamma2} repeatedly in this way, we deduce that, for each $n \in \N$,
\begin{equation}\label{gamma4}
 K_n = \{z \in K: |f^n(z)| \geq M^n(R)\} \neq \emptyset.
\end{equation}
Since, for each $n \in \N$, $K_n$ is compact and $K_{n+1} \subset K_n$, we have $\bigcap_{n \in \N} K_n \neq \emptyset$. It follows from~\eqref{gamma4} that
\[
\bigcap_{n \in \N}K_n \subset A_R(f)
\]
and from~\eqref{gamma3} that
\[
\bigcap_{n \in \N}K_n \subset K \subset G \subset A_R(f)^c.
\]
This is a contradiction and so our original supposition that $G$ is not a subset of $G_0$ must have been incorrect. Thus $G \subset G_0$ and is therefore bounded as required.
\end{proof}

We obtain the following corollary of Theorem~\ref{gamman} by taking the domains $G_n$ to be discs. Here $m(r) = \min_{|z| = r}|f(z)|$, for $r>0$.

\begin{corollary}\label{m(r)}
Let $f$ be a {\tef} and let $R>0$ be such that $M(r,f) > r$ for $r\geq R$. Then $A_R(f)$ is a {\sw} if there exists a sequence $(\rho_n)$ such that, for $n \geq 0$,
\begin{equation}\label{r1}
\rho_n > M^n(R)
\end{equation}
and
\begin{equation}\label{r2}
m(\rho_n) \geq \rho_{n+1}.
\end{equation}
\end{corollary}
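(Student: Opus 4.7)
The plan is to apply Theorem~\ref{gamman} with the simplest possible choice of bounded simply connected domains, namely the discs $G_n = \{z : |z| < \rho_n\}$. Condition~\eqref{gamma1} is then immediate from the hypothesis $\rho_n > M^n(R)$, so the entire task reduces to verifying condition~\eqref{gamma2}: for each $n \geq 0$, the disc $G_{n+1}$ lies in a bounded component of $\C \setminus f(\partial G_n)$.

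For this I would first observe that, by hypothesis~\eqref{r2}, $|f(z)| \geq m(\rho_n) \geq \rho_{n+1}$ for $z \in \partial G_n$, so $f(\partial G_n) \subset \{w : |w| \geq \rho_{n+1}\}$ is disjoint from $G_{n+1} = \{w : |w| < \rho_{n+1}\}$. Since $G_{n+1}$ is connected, it lies in a single component of $\C \setminus f(\partial G_n)$, and only the boundedness of this component needs to be established.

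To pin down boundedness, I would use $f(0)$ as a probe point. The maximum principle gives $|f(0)| \leq M(R)$, and since $\rho_{n+1} > M^{n+1}(R) \geq M(R)$ for every $n \geq 0$, we have $f(0) \in G_{n+1}$; in particular $f(0) \notin f(\partial G_n)$. Moreover $0 \in G_n$ is a zero of $f - f(0)$, so by the argument principle the winding number of $f(\partial G_n)$ about $f(0)$ is at least $1$, and hence $f(0)$ lies in a bounded component of $\C \setminus f(\partial G_n)$. Since $G_{n+1}$ is a connected subset of $\C \setminus f(\partial G_n)$ containing $f(0)$, all of $G_{n+1}$ lies in that same bounded component, completing the verification of~\eqref{gamma2}.

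The one step that is not pure bookkeeping is this enclosure argument: the modulus inequality on $\partial G_n$ only shows that $f(\partial G_n)$ \emph{avoids} $G_{n+1}$, and does not by itself force the image curve to \emph{surround} $G_{n+1}$. The argument principle, applied at the concrete probe point $f(0)$ which the standing assumption $M(R) > R$ forces into $G_{n+1}$, is what upgrades disjointness into enclosure.
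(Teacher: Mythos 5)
Your proposal is correct and follows exactly the route the paper intends: the paper's entire proof of this corollary is the remark that one applies Theorem~\ref{gamman} with the discs $G_n = \{z : |z| < \rho_n\}$, and your argument-principle verification (using $f(0)$ as a probe point to show $f(\partial G_n)$ actually surrounds $G_{n+1}$) is a correct filling-in of the one detail the paper leaves implicit.
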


Much work has been done on establishing when the conditions of this corollary are satisfied in connection with a conjecture of Baker that functions of small growth have no unbounded Fatou components. (For further details on the history of this problem see the survey article~\cite{H}.) The strongest results on this problem are given in~\cite{RS08} and~\cite{HM}. Recall from Theorem~\ref{main6} part (b) that if $A_R(f)$ is a spider's web, then $f$ has no unbounded Fatou components. Thus, if $A_R(f)$ is a spider's web, then both Eremenko's conjecture (that all the components of $I(f)$ are unbounded) and Baker's conjecture hold.

It follows from the discussion at the end of the first section of~\cite{RS08} that the conditions of Corollary~\ref{m(r)} are satisfied if $f$ is a {\tef} and there exist $m \geq 2$ and $r_0>0$ such that
\begin{equation}\label{small}
 \log \log M(r) < \frac{\log r}{\log^m r}, \mbox{ for } r > r_0.
\end{equation}
Here $\log^m$ denotes the $m$th iterated logarithm function. Thus, for such functions, $A_R(f)$ is a {\sw}. As discussed in~\cite[Section 6]{RS07}, there are many examples of such functions for which there are no multiply connected Fatou components. Firstly, Bergweiler and Eremenko showed in~\cite{BE} (see also~\cite{BE01}) that there are functions of arbitrarily small growth (and hence functions satisfying~\eqref{small}) for which the Julia set is the whole plane. Further, Baker~\cite{B01} and Boyd~\cite{Bo} independently showed that there are functions of arbitrarily small growth (and hence functions satisfying~\eqref{small}) for which every point in the Fatou set tends to $0$ under iteration.

We now define the order and lower order of a {\tef} since these concepts play a key role in discussing further examples of functions for which $A_R(f)$ is a {\sw}. The {\it order of growth} of a {\tef} $f$ is defined by
\[
  \rho(f) = \limsup_{r \to \infty} \frac{\log \log M(r)}{\log r}
\]
and the {\it lower order of growth} is
\[
 \lambda(f) = \liminf_{r \to \infty} \frac{\log \log M(r)}{\log r}.
\]

 Functions satisfying~\eqref{small} have order zero. It follows from the results in~\cite[Section 6]{RS07} that there are many functions of order less than $1/2$ with regular growth for which $A_R(f)$ is a {\sw}. (Note that we do not use this terminology in that paper -- there we say that for these functions the set $B_D(f)$ is connected. This set has an analogous definition to the set $A_R(f)$ -- see the remarks after Corollary~\ref{equal}.)

 We now give a general result which can be used to show that $A_R(f)$ is a {\sw}, not only for the functions discussed above, but for many other classes of functions.

 \begin{corollary}\label{regular}
 Let $f$ be a {\tef} and let $R>0$ be such that $M(r,f) > r$ for $r\geq R$. Then $A_R(f)$ is a {\sw} if, for some $m>1$,
\begin{itemize}
\item[(a)]there exists $R_0 > 0$ such that, for all $r \geq R_0$,
\begin{equation}\label{min}
 \mbox{ there exists } \rho \in (r,r^m) \mbox{ with } m(\rho) \geq M(r), \mbox{ and }
\end{equation}

\item[(b)] $f$ has regular growth in the sense that there exists a sequence $(r_n)$ with
    \begin{equation}\label{reg}
     r_n > M^n(R) \mbox{ and } M(r_n) \geq r_{n+1}^m, \mbox{ for } n \geq 0.
    \end{equation}
\end{itemize}
 \end{corollary}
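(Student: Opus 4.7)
The strategy is to use hypothesis (a) to build a sequence $(\rho_n)$ satisfying the hypotheses of Corollary~\ref{m(r)}, with the regular-growth sequence $(r_n)$ from hypothesis (b) providing the underlying scaffolding. Since $r_n > M^n(R) \to \infty$ by~\eqref{MR}, there is an index $N \in \N$ such that $r_n \geq R_0$ for all $n \geq N$. For each such $n$ I would apply hypothesis (a) with $r = r_n$ to select some $\rho_n \in (r_n, r_n^m)$ with $m(\rho_n) \geq M(r_n)$.

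The plan is then to feed the shifted sequence $(\rho_{n+N})_{n \geq 0}$ into Corollary~\ref{m(r)}, but with basepoint $M^N(R)$ in place of $R$; this is legitimate because $M(r) > r$ for all $r \geq M^N(R) \geq R$. The size condition is immediate:
\[
\rho_{n+N} > r_{n+N} > M^{n+N}(R) = M^n(M^N(R)), \quad n \geq 0.
\]
For the descending minimum-modulus condition, I would chain three inequalities,
\[
m(\rho_{n+N}) \;\geq\; M(r_{n+N}) \;\geq\; r_{n+N+1}^m \;>\; \rho_{n+N+1},
\]
where the first step is hypothesis~(a), the second is the regular-growth hypothesis~\eqref{reg}, and the third records that $\rho_{n+N+1}$ lies in the open interval $(r_{n+N+1}, r_{n+N+1}^m)$.

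With both hypotheses of Corollary~\ref{m(r)} verified for the basepoint $M^N(R)$, I conclude that $A_{M^N(R)}(f)$ is a spider's web, and Lemma~\ref{SW1}(d) transfers this back to $A_R(f)$. The whole argument is essentially a bookkeeping exercise built on Corollary~\ref{m(r)} and Lemma~\ref{SW1}(d); the only mild subtlety is that condition (a) only applies for $r \geq R_0$, which is handled cleanly by the index shift and the basepoint-change lemma, so I do not anticipate any real obstacle.
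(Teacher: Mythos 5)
Your proposal is correct and follows essentially the same route as the paper: apply hypothesis (a) at the points $r_n$ to produce $\rho_n\in(r_n,r_n^m)$ with $m(\rho_n)\geq M(r_n)\geq r_{n+1}^m>\rho_{n+1}$ and $\rho_n>r_n>M^n(R)$, then invoke Corollary~\ref{m(r)}. The only (harmless) difference is that the paper simply takes the sequence $(r_n)$ with $r_n>R_0$ from the outset, whereas you handle the restriction $r\geq R_0$ by an index shift and a change of basepoint via Lemma~\ref{SW1}(d).
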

 \begin{proof}
 Let $R_0 > 0$ be as in part (a). Then, by part (b), there exists a sequence $(r_n)$ satisfying~\eqref{reg} with $r_n > R_0$, for $n \geq 0$. So, by~\eqref{min}, for each $n \geq 0$, there exists $\rho_n \in (r_n, r_n^m)$ with
 \[
  m(\rho_n) \geq M(r_n) \geq r_{n+1}^m > \rho_{n+1}.
 \]
 By~\eqref{reg}, we have $\rho_n > r_n > M^n(R)$, for $n \geq 0$. Thus the conditions of Corollary~\ref{m(r)} are satisfied and hence $A_R(f)$ is a {\sw}.
 \end{proof}

Note that Corollary~\ref{regular} generalises~\cite[Theorem 5]{RS08} which states that, if $f$ is a {\tef} of order less than 1/2, then $f$ has no unbounded Fatou components whenever, for all $m>1$, there exists a real function
$\psi$ defined on $(r_0, \infty)$, where $r_0>0$, such that, for
$r\geq r_0$, we have $\psi(r) \geq r$ and
\begin{equation}\label{sg}
  M(\psi(r)) \geq \left(\psi(M(r))\right)^m.
\end{equation}
Indeed, if $f$ has order less than $1/2$, then $f$ satisfies Corollary~\ref{regular} part (a) for all sufficiently large values of $m$ -- this was proved by Baker~\cite[Satz 1]{B58} and also follows from the version of the $\cos \pi \rho$ theorem proved by Barry~\cite{Ba}. If the function $\psi$ above exists for such a value of $m$, then Corollary~\ref{regular} part (b) holds by taking $r_n = \psi(M^n(R))$. Then Corollary~\ref{regular} implies that $A_R(f)$ is a spider's web, so $f$ has no unbounded Fatou components by Theorem~\ref{main6} part (b).

 For a discussion of various functions satisfying the regularity condition~\eqref{sg}, see~\cite[Section 7]{RS08}. For example, we show there that~\eqref{sg} holds for a suitable function $\psi$
 if there exists $c>0$ such
that the function $\phi(x) = \log M(e^x)$ satisfies
\begin{equation}\label{AHr}
  \frac{\phi'(x)}{\phi(x)} \geq \frac{1+c}{x}, \; \mbox{ for large } x.
\end{equation}
 The condition~\eqref{AHr} is used by Anderson and Hinkkanen in~\cite{AH} and it is satisfied if $f$ has finite order and positive lower order -- see~\cite[p.205]{H}. So, if $f$ has finite order and positive lower order, then $f$ satisfies Corollary~\ref{regular} part (b).

We now discuss several classes of functions for which Corollary~\ref{regular} part (a) is satisfied for all sufficiently large values of $m$. It follows from Corollary~\ref{regular} that, for functions in these classes,  $A_R(f)$ is a {\sw} whenever the growth is regular, as defined by~\eqref{reg} for an appropriate value of $m$.\\

{\bf Class 1}

As mentioned above, it follows from the $\cos \pi \rho$ theorem that any function of order less than $1/2$ satisfies Corollary~\ref{regular} part (a) for all sufficiently large values of $m$. Many examples of functions of order less than $1/2$ with regular growth are given in~\cite[Section 6]{RS07}. These examples include the function
\[
f(z)=\frac12\left(\cos z^{1/4} + \cosh z^{1/4}\right)=1+\frac{z}{4!}+\frac{z^2}{8!}+\cdots,
\]
which has order $1/4$.
The spider's web $A_R(f)$ for this function is illustrated in Section 1.\\

{\it Remark.} Rod Halburd (personal communication) has pointed out that, more generally, functions of the form
\[
f(z) = \sum_{n=0}^{\infty} \frac{z^{pn}}{(qn)!}, \; \; p,q \in \N,
\]
are entire with order $p/q$. These functions also have regular growth, so if $p/q < 1/2$, then $A_R(f)$ is a spider's web. Further to this, Sixsmith~\cite{Si} has observed that, for many entire functions $g(z) = \sum_{n=0}^{\infty} a_nz^n$, the function
\[
 f(z) = \sum_{n=0}^{\infty} a_{qn}z^{pn}, \; \; p,q \in \N,
\]
has an $A_R(f)$ spider's web.\\

{\bf Class 2}

Many entire functions for which the power series expansion has wide enough `gaps' also satisfy Corollary~\ref{regular} part (a) for all sufficiently large values of $m$. A {\tef} $f$ has {\it Fabry gaps} if
\[
 f(z) = \sum_{k=0}^{\infty} a_k z^{n_k}
\]
 and $n_k/k \to \infty$ as $k \to \infty$. It was shown by Fuchs~\cite{F} that, if $f$ has finite order and Fabry gaps, then, for each $\eps > 0$,
 \[
  \log m(r) > (1-\eps) \log M(r)
 \]
 holds for all $r$ outside a set $E$ for which
 \[
  \lim_{r \to \infty} \frac{1}{\log r} \int_{E \cap (1,r)} \frac{dt}{t} = 0.
 \]
 Together with Lemma~\ref{convex}, this result shows that functions of finite order with Fabry gaps satisfy Corollary~\ref{regular} part (a) for each $m>1$. We now give an example of such a function $f$ for which Corollary~\ref{regular} part (b) is also satisfied for each $m>1$, and hence $A_R(f)$ is a {\sw}.

 \begin{example}
 Let $f$ be the {\tef} given by
 \[
 f(z) = \sum_{k=0}^{\infty}\frac{1}{(k^2)!} z^{[ck]^2}, \mbox{ where } c>0.
 \]
 Then $f$ is a function of order $c^2$ with Fabry gaps and regular growth, and so $A_R(f)$ is a {\sw}.
 \end{example}
 \begin{proof}
Note that, for $c \in \N$, the power series defining $f$ is obtained from the Taylor series for $\exp(z^{c^2})$ by discarding all terms except those for which the power of $z$ is of the form $c^2k^2$.  The function $f$ clearly has Fabry gaps, and it is not difficult to check that it has order and lower order equal to $c^2$. These can be calculated either directly or by using the following formulae (see~\cite{J}) for the order and lower order of an entire function of the form $f(z) = \sum_{n=0}^{\infty} a_n z^n$:

\begin{equation}\label{order}\rho(f)=\limsup_{n\to\infty}\frac{n\log n}{\log
1/|a_n|}\quad\text{and}\quad
\lambda(f)=\max_{(n_k)}\liminf_{k\to\infty}\frac{n_k\log
n_{k-1}}{\log 1/|a_{n_k}|}.
\end{equation}

Since $f$ has finite order and Fabry gaps, it satisfies Corollary~\ref{regular} part (a) for each $m>1$. Since $f$ has finite order and positive lower order, it follows from the discussion after Corollary~\ref{regular} that the growth of $f$ is sufficiently regular to satisfy Corollary~\ref{regular} part (b) for each $m>1$. Thus, by Corollary~\ref{regular}, $A_R(f)$ is a {\sw}.
\end{proof}

 There are also functions of {\it infinite} order with gap series for which Corollary~\ref{regular} part (a) is satisfied for each $m>1$. Hayman~\cite{H72} showed that Fuch's result above, for functions of finite order with Fabry gaps, holds for functions of any order provided that a stronger gap series condition is satisfied, which can be stated as follows:
 \begin{equation}\label{gaps}
  n_k > k \log k(\log \log k)^{\alpha},
 \end{equation}
 for some $\alpha >2$ and sufficiently large values of $k$. Thus, by the same argument as above, such functions also satisfy Corollary~\ref{regular} part (a) for each $m>1$. An explicit example of a {\tef} of infinite order which satisfies~\eqref{gaps} and with sufficiently regular growth to ensure that $A_R(f)$ is a spider's web is given by Sixsmith~\cite{Si}.

 {\it Remark.} It was proved by Wang~\cite{W} that functions with gap series of the type described here  and with regular growth of a certain type have no unbounded Fatou components. This is also implied by our results since functions for which $A_R(f)$ is a {\sw} have no unbounded Fatou components.\\

 {\bf Class 3}

 Many functions which show the pits effect also satisfy Corollary~\ref{regular} part (a) for all sufficiently large values of $m$. Roughly speaking, a function shows the pits effect if it is very large except in small regions, known as pits, around the zeros of~$f$. If the pits are sufficiently small and spaced out and the function is sufficiently large outside the pits, then Corollary~\ref{regular} part (a) is satisfied. The pits effect has been studied by many authors and, in most cases, it is sufficiently strong for Corollary~\ref{regular} part (a) to be satisfied.

 Here we look in detail at the functions studied by Littlewood and Offord in~\cite{LO} since it follows from their results that, in a certain sense, for almost all functions of finite order and positive lower order, $A_R(f)$ is a {\sw}. Thus functions for which $A_R(f)$ is a spider's web are not unusual, as was originally thought.  Other papers giving examples of functions which show a suitably strong version of the pits effect include~\cite{EO} and~\cite{O}.

 \begin{example}
 Suppose that
 \[
  \sum_{n=0}^{\infty}a_nz^n
 \]
 is a {\tef} of order $\rho \in (0,\infty)$ with positive lower order and that
 \[
  C = \{f: f(z) = \sum_{n=0}^{\infty} \eps_n a_n z^n\},
 \]
 where the $\eps_n$ take the values $\pm 1$ with equal probability.
 Then $A_R(f)$ is a {\sw} for almost all functions in $C$.
 \end{example}
 \begin{proof}
 It was shown by Littlewood and Offord in~\cite{LO} that almost all functions in the set $C$ show the pits effect. More precisely, they proved, among other things, that, for each $\eps > 0$, there exist $A,B,r_0>0$ and a set $C_{\eps} \subset C$ of measure less than $\eps$ such that if $f \in C \setminus C_{\eps}$, $r>r_0$, $\mu(r) = \sup_{n}|a_n|r^n$ and $N(r)$ is the largest~$n$ satisfying this equality, then
 \begin{enumerate}
 \item[(a)] if $z$ is not in a pit and $|z| = r$, then
 \begin{equation}\label{pit}
 \log |f(z)| > \log \mu(r) - r^{\eps};
 \end{equation}
 \item[(b)] if $z$ is in a pit and $|z| = r > r_0$, then the radius of the pit is at most $\exp(-r^{\eps})$;
 \item[(c)] the number of pits in the annulus $\{z: r/2 \leq |z| \leq 3r/2\}$ is at most $AN(2r) + Br^{\eps}$.
 \end{enumerate}
 It follows from~\cite[Theorem 6 and the proof of (6.20)]{H1} and also from~\cite[Theorem 6.23 and Lemma 6.15]{H2} that
\begin{equation}\label{N(r)}
N(r) < (\log M(r))^2
\end{equation}
and
\begin{equation}\label{mu(r)}
\mu(r) > (M(r))^{1/2},
\end{equation}
 for $r \notin E$, where $E$ is a set of finite logarithmic measure; that is, $\int_{E \cap [1,\infty)} \frac{dt}{t} < \infty$.
We now take $\eps$ with $0 < \eps < \lambda/2$, where $\lambda$ is the lower order of $f$. It follows from~(a) above together with~\eqref{mu(r)} that, if $z$ is not in a pit and $|z| = r$, then
\begin{equation}\label{large}
 \log |f(z)| > \frac{1}{4} \log M(r),
\end{equation}
for all values of $r$ except those in a set of finite logarithmic measure. Further, it follows from (b) and (c) above together with~\eqref{N(r)} that the set of values of $r$ for which $\{z:|z| = r\}$ meets a pit has finite logarithmic measure and so~\eqref{large} holds except in a set of finite logarithmic measure. Together with Lemma~\ref{convex}, this is sufficient to show that, for each function in $C \setminus C_{\eps}$, Corollary~\ref{regular} part (a) is satisfied for all sufficiently large values of $m$.  The result now follows since $\eps$ can be chosen to be arbitrarily small and functions in the class $C$ have finite order and positive lower order, and hence satisfy Corollary~\ref{regular} part (b) for each $m>1$.
 \end{proof}

{\it Remark.} It is of interest to know which of the above examples of functions with $A_R(f)$ spiders' webs have no multiply connected Fatou components. In forthcoming work~\cite{RSreg} we show that any {\tef} for which $M(r,f)$ satisfies the condition~\eqref{AHr}, introduced by Anderson and Hinkkanen, has {\it no} multiply connected Fatou components, and we give sufficient conditions on $f$ for~\eqref{AHr} to hold.

 We end this section by proving the following result which allows us to construct many more examples of functions for which $A_R(f)$ is a {\sw}. For example, we can deduce from Theorem~\ref{main99} and a theorem of P\'olya~\cite[Theorem 2.9]{wH64} that, if $f$ has non-zero order and $A_R(f)$ is a spider's web, then $f^2$ has infinite order and $A_R(f^2)$ is a spider's web.

 \begin{theorem}\label{main99}
Let $f$ be a {\tef} and let $R>0$ be such that $M(r,f) > r$ for $r\geq R$. Then, for $m \in \N$,  $A_R(f^m)$ is a spider's web if and only if $A_R(f)$ is a {\sw}.
\end{theorem}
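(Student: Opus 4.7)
The plan is to reduce the statement to a straightforward comparison of the complements of the relevant fast escaping sets, using the inclusions supplied by Theorem~\ref{eq} together with the characterisation of the spider's web property given in Lemma~\ref{SW1}. Recall from Theorem~\ref{eq} that, provided $R$ is sufficiently large,
\[
 A_R(f) \subset A_R(f^m) \subset A_{R/2}(f).
\]
Also, by Lemma~\ref{SW1}~(d) applied to $f$ and to $f^m$, whether $A_R(f)$ or $A_R(f^m)$ is a spider's web is independent of the particular admissible $R$ (i.e., any $R$ for which $M(r,f) > r$, respectively $M(r,f^m) > r$, for $r \geq R$). Hence we may, without loss of generality, replace $R$ by any larger value; in particular we may assume that $R$ is large enough that Theorem~\ref{eq} applies and that $R/2$ is also admissible for $f$.

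For the forward direction, suppose that $A_R(f)$ is a spider's web. By Lemma~\ref{SW1}~(b), every component of $A_R(f)^c$ is bounded. Since $A_R(f) \subset A_R(f^m)$, we have $A_R(f^m)^c \subset A_R(f)^c$, so each component of $A_R(f^m)^c$ lies inside a component of $A_R(f)^c$ and is therefore bounded. Because $A_R(f^m)^c \ne \emptyset$ (it contains $0$), it has a bounded component, and Lemma~\ref{SW1}~(b) then yields that $A_R(f^m)$ is a spider's web.

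For the converse, suppose that $A_R(f^m)$ is a spider's web. By Lemma~\ref{SW1}~(b) again, every component of $A_R(f^m)^c$ is bounded. Since $A_R(f^m) \subset A_{R/2}(f)$, we have $A_{R/2}(f)^c \subset A_R(f^m)^c$, and the same containment-of-components argument shows that every component of $A_{R/2}(f)^c$ is bounded. Hence, by Lemma~\ref{SW1}~(b), $A_{R/2}(f)$ is a spider's web, and Lemma~\ref{SW1}~(d), applied with $R/2 < R$, gives that $A_R(f)$ is a spider's web, as required.

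The only real obstacle is the bookkeeping around the admissible value of $R$: Theorem~\ref{eq} requires $R$ to be sufficiently large, and the backward step appeals to $A_{R/2}(f)$, which forces $R/2$ to be admissible for $f$. Both issues are absorbed by the $R$-independence supplied by Lemma~\ref{SW1}~(d), so the argument is essentially a two-line consequence of the sandwiching inclusion and the characterisation of the spider's web property via boundedness of complementary components.
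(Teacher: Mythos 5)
Your proposal is correct and follows essentially the same route as the paper's proof: the sandwich inclusions $A_R(f)\subset A_R(f^m)\subset A_{R/2}(f)$ from Theorem~\ref{eq}, the boundedness-of-complementary-components criterion in Lemma~\ref{SW1}~(b), and the $R$-independence in Lemma~\ref{SW1}~(d). The only cosmetic difference is that you normalise $R$ upward at the outset, whereas the paper introduces the larger radius $R'$ only in the converse step; the substance is identical.
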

\begin{proof}
Suppose that $A_R(f)$ is a {\sw}. It follows from Lemma~\ref{SW1} part (b) that each component of $A_R(f)^c$ is bounded. We know from Theorem~\ref{eq} that $A_R(f) \subset A_R(f^m)$ and so
 each component of $A_R(f^m)^c$ is bounded. Thus, by Lemma~\ref{SW1} part (b), $A_R(f^m)$ is a {\sw}.

To prove the converse, we use the result of Theorem~\ref{eq} that
if $R'\geq R$ is sufficiently large, then $A_{R'}(f^m) \subset A_{R'/2}(f)$.
It follows from Lemma~\ref{SW1} part (d) that, if $A_R(f^m)$ is a {\sw} then $A_{R'}(f^m)$ is also a {\sw} and hence, by Lemma~\ref{SW1} part (b), that each component of $A_{R'}(f^m)^c$ is bounded. Thus each component of $A_{R'/2}(f)^c$ is also bounded and so, by Lemma~\ref{SW1} part (b), $A_{R'/2}(f)$ is a {\sw}. Thus, by Lemma~\ref{SW1} part (d), $A_R(f)$ is a {\sw}.
\end{proof}


{\bf Acknowledgements.} We thank Walter Bergweiler and Lasse Rempe for stimulating discussions, and Dan Nicks, John Osborne and Dave Sixsmith for their helpful comments. We also thank Dominique Fleischmann for producing the picture of the $A_R(f)$ spider's web.


\begin{thebibliography}{99}

\bibitem{AH} J.M. Anderson and A. Hinkkanen, Unbounded domains
of normality, {\it Proc. Amer. Math. Soc.}, 126 (1998), 3243--3252.

\bibitem{B58} I.N. Baker, Zusammensetzungen ganzer Funktionen,
{\it Math. Z.}, 69 (1958), 121--163.




\bibitem{iB84} I.N. Baker, Wandering domains in the iteration of entire functions,
{\it Proc. London Math. Soc.} (3), 49 (1984), 563--576.




\bibitem{iB88} I.N. Baker, Infinite limits in the iteration of entire functions, {\it Ergodic Theory Dynam. Systems}, 5 (1988), 503--507.

\bibitem{B01} I.N. Baker, Dynamics of slowly growing entire
functions, {\it Bull. Aust. Math. Soc.}, 63 (2001), 367--377.








\bibitem{kB07} K. Bara\' nski, Trees and hairs for some hyperbolic entire maps of finite order, {\it Math. Z.}, 257 (2007), no. 1, 33--59.

\bibitem{Ba} P.D. Barry, On a theorem of Besicovitch,
{\it Quart. J. Math. Oxford Ser.} (2), 14 (1963), 293--302.

\bibitem{aB} A.F.~Beardon, {\it Iteration of rational functions}, Graduate Texts in Mathematics 132, Springer-Verlag, 1991.

\bibitem{wB93} W. Bergweiler, Iteration of meromorphic functions, {\it Bull. Amer. Math. Soc.}, 29 (1993), 151--188.





\bibitem{wB08} W. Bergweiler, An entire function with simply and multiply connected wandering domains, {\it Pure Appl. Math. Quarterly}, 7 (2011), 107--120.

\bibitem{BE} W. Bergweiler and A.E. Eremenko, Entire functions
of slow growth whose Julia set coincides with the plane, {\it
Ergodic Theory Dynam. Systems}, 20 (2000), 1577--1582.

\bibitem{BE01} W. Bergweiler and A.E. Eremenko, Correction to the paper: `Entire functions
of slow growth whose Julia set coincides with the plane', {\it
Ergodic Theory Dynam. Systems}, 21 (2001), 637.

\bibitem{BFLM} W. Bergweiler, A. Fletcher, J. Langley and J. Meyer, The escaping set of a quasiregular mapping, {\it Proc. Amer. Math. Soc.}, 137 (2009), 641-651.

\bibitem{BH99} W. Bergweiler and A. Hinkkanen, On semiconjugation of entire functions,
{\it Math. Proc. Camb. Phil. Soc.}, 126 (1999), 565--574.


\bibitem{BRS} W. Bergweiler, P.J. Rippon and G.M. Stallard, Dynamics of meromorphic functions with direct or logarithmic singularities, {\it Proc. London Math. Soc.}, 97 (2008), 368--400.

\bibitem{BRS2} W. Bergweiler, P.J. Rippon and G.M. Stallard, The size of nested wandering domains. In preparation.


\bibitem{Bo} D.A. Boyd, An entire function with slow growth
and simple dynamics, {\it Ergodic Theory Dynam. Systems}, 22 (2002),
317--322.







\bibitem{E} A.E. Eremenko, On the iteration of entire functions, {\it Dynamical systems and ergodic theory,} Banach Center Publications 23, Polish Scientific Publishers, Warsaw, 1989, 339--345.


\bibitem{EL92}  A.E. Eremenko and M.Yu. Lyubich, Dynamical properties of some classes of entire functions, {\it Ann. Inst. Fourier, Grenoble}, 42 (1992), 989--1020.


\bibitem{EO} A. Eremenko and I. Ostrovskii, On the `pits effect' of Littlewood and Offord, {\it Bull. London Math. Soc.}, 39 (2007), 929--939.

\bibitem{pF26} P. Fatou, Sur l'it\'{e}ration des fonctions transcendantes enti\`{e}res,
{\it Acta Math.}, 47 (1926), 337-360.





\bibitem{F} W.H.J. Fuchs, Proof of a conjecture of G. P\'{o}lya concerning gap series, {\it Illinois J. Math.}, 7 (1963), 661--667.

\bibitem{Fu} W.H.J. Fuchs, Asymptotic evaluation of integrals
and Wiman-Valiron theory, {\it Complex Anal. and Appl. Lect. Int.
Semin. Course, Trieste}, 1975, 235-283.

\bibitem{wH64} W.K. Hayman, {\it Meromorphic functions}, Oxford University Press, 1964.


\bibitem{H72} W.K. Hayman, Angular value distribution of power series with gaps, {\it Proc. London Math. Soc.} (3), 24 (1972), 590--624.

\bibitem{H1}  W.K. Hayman, The local growth of power series: a survey of the Wiman-Valiron method,
{\it Canad. Math. Bull.}, 17 (1974), 317--358.

\bibitem{H2} W.K. Hayman, {\it Subharmonic functions, Volume 2}, Academic Press, 1989.



\bibitem{mH98} M.E. Herring, Mapping properties of Fatou components, {\it Ann. Acad. Sci. Fenn. Math.},
23 (1998), 263--274.

\bibitem{H} A. Hinkkanen, Entire functions with bounded Fatou components, {\it Transcendental dynamics and complex analysis}, 187--216, Cambridge University Press, 2008.

\bibitem{HM} A. Hinkkanen and J. Miles, Growth conditions for entire
functions with only bounded Fatou components, {\it Journal d'Analyse
Math.}, 108 (2009), 87--118.

\bibitem{Ja} X. Jarque, On the connectivity of the escaping set for complex exponential Misiurewicz parameters. Preprint. arXiv:1006.0222.

\bibitem{J} O.P. Juneja, On the coefficients of an entire series, {\it Journal d'Analyse Math.}, 24 (1971), 395--401.





\bibitem{LO} J.E. Littlewood and A.C. Offord, On the distribution of the zeros and $a$-values of a random integral function, {\it J. London Math. Soc.}, 20 (1945), 130--136.





\bibitem{Nev} R. Nevanlinna, {\it Analytic functions}, Springer, 1970.

\bibitem{New} M.H.A. Newman, {\it Elements of the topology of plane sets of points},
Cambridge University Press, 1961.


\bibitem{O} A.C. Offord, Lacunary entire functions, {\it Math. Proc. Camb. Phil. Soc.}, 114 (1993), 67--83.

\bibitem{Os} J.W. Osborne, The structure of spider's web fast escaping sets. In preparation.








\bibitem{lR07}
L.\ Rempe, On a question of Eremenko concerning escaping components of entire
functions, {\it Bull.\ London Math.\ Soc.}, 39 (2007), 661--666.

\bibitem{lR09} L.\ Rempe, The trouble with Eremenko's conjecture, {\it Oberwolfach Report} 54 (2009), 2931--2932.


\bibitem{lR10}
L.\ Rempe, The escaping set of the exponential, {\it Ergodic Theory Dynam. Systems}, 30 (2010), 595--599.

\bibitem{lR10b} L.\ Rempe, Connected escaping sets of exponential maps. To appear in {\it Ann. Acad. Sci. Fenn.}, arXiv:0910.4680.



\bibitem{RRS} L. Rempe, P.J. Rippon and G.M. Stallard, Are Devaney hairs
fast escaping?  {\it J. Diff. Eq. and Appl.}, 16 (2010), 739--762.





\bibitem{RS00} P.J. Rippon and G.M. Stallard, On sets where iterates of a
meromorphic function zip towards infinity, {\it Bull. London Math. Soc.},
32 (2000), 528--536.

\bibitem{RS05} P.J. Rippon and G.M. Stallard, On questions of Fatou and Eremenko,
{\it Proc. Amer. Math. Soc.}, 133 (2005), 1119--1126.

\bibitem{RS05a} P.J. Rippon and G.M. Stallard, Escaping points of meromorphic functions
with a finite number of poles, {\it Journal d'Analyse Math.}, 96 (2005), 225--245.


\bibitem{RS07} P.J. Rippon and G.M. Stallard, Escaping points of entire functions of small growth,
{\it Math. Z.}, 261 (2009), 557--570.

\bibitem{RS08} P.J. Rippon and G.M. Stallard, Functions of small growth with no
unbounded Fatou components, {\it Journal d'Analyse
Math.}, 108 (2009), 61--86.

\bibitem{RS08c} P.J. Rippon and G.M. Stallard, Slow escaping points
of meromorphic functions. To appear in {\it Trans. Amer. Math. Soc.}, arXiv:0812.2410.

\bibitem{RSreg} P.J. Rippon and G.M. Stallard, Dynamical properties of entire functions of regular growth. In preparation.

\bibitem{RRRS} G.\ Rottenfu{\ss}er, J. R\"uckert, L. Rempe and D.\ Schleicher,
Dynamic rays of bounded-type entire functions. To appear in {\it Ann. Math.}, arXiv: 0704.3213.

\bibitem{Si} D.J. Sixsmith, Examples of functions for which the escaping set is a spider's web. In preparation.


\bibitem{SO} G.M. Stallard, Multiply connected Fatou components of meromorphic functions, {\it Oberwolfach Report} 54 (2009), 2955--2957.













\bibitem{W} Y. Wang, On the Fatou set of an entire function with gaps, {\it Tohuku Math. J.}, 53 (2001), 163--170.

\bibitem{jhZ06} J-H. Zheng, On multiply-connected Fatou components
in iteration of meromorphic functions, {\it J. Math. Anal. Appl.},
313 (2006), 24--37.



\end{thebibliography}
\end{document}